\newtheorem{theo}{Theorem}
\newtheorem{prop}[theo]{Proposition}
\newtheorem{lem}[theo]{Lemma}
\newtheorem{remark}[theo]{Remark}
\newcommand{\bbR}{\mathbb{R}}
\newcommand{\set}[1]{\left\{#1\right\}}
\newcommand{\eps}{\varepsilon}
\renewcommand{\P}{\mathbb{P}}
\newcommand{\Z}{\mathbb{Z}}
\newcommand{\R}{\mathbb{R}}
\begin{document}

\title{Longest increasing paths with Lipschitz constraints}
\author{%
\textsc{Anne-Laure Basdevant}\footnote{Laboratoire Modal'X, UPL,  Univ. Paris Nanterre, France. email: anne.laure.basdevant@normalesup.org. Work partially supported by ANR PPPP, ANR Malin and Labex MME-DII.} \and \textsc{Lucas Gerin}\footnote{CMAP, Ecole Polytechnique, France. email: gerin@cmap.polytechnique.fr.  Work partially supported by ANR PPPP and ANR GRAAL.}}

\maketitle
\begin{abstract}
The Hammersley problem asks for the maximal number of points in a monotonous path through a Poisson point process. It is exactly solvable and notoriously known to belong to the KPZ universality class, with a cube-root scaling for the fluctuations.

Here we introduce and analyze a variant in which we impose a Lipschitz condition on paths. Thanks to a coupling with the classical Hammersley problem we observe that this variant is also exactly solvable. It allows us to derive first and second order asymptotics. It turns out that the cube-root scaling only holds for certain choices of the Lipschitz constants.
\end{abstract}
 
\noindent{\bf {\textsc MSC 2010 Classification}:} 60K35, 60F15.\\
\noindent{\bf Keywords:} combinatorial probability, last-passage percolation, longest increasing subsequences, longest increasing paths, Hammersley's process, cube-root fluctuations.

\section{Introduction}

The focus of the present paper is the Hammersley (or Ulam-Hammersley) problem. It asks for the asymptotic behaviour of the maximal number $\ell_n$ of points  that belong to a monotonous path through $n$ uniform points in a square (or more precisely a Poissonized version thereof, see details below).
It is among the few exactly solvable models which are known to lie in the \emph{Kardar-Parisi-Zhang (KPZ) university class}, another exactly solvable model is the corner-growth model (which is itself closely related to last-passage percolation).
The main feature of the KPZ class is a universal relation $\chi=2\xi -1$ between the \emph{fluctuation exponent} $\chi$ and the \emph{wandering exponent} $\xi$ (also called \emph{dynamic scaling exponent} or \emph{transversal exponent}). We refer the reader to \cite{CorwinKPZ} for an introduction to the KPZ class.

For the Hammersley problem, critical exponents have been shown to exist and we have $\chi=1/3$ (\cite{BDK}) and $\xi=2/3$ (\cite{JohanssonTransversal}). The proof of $\chi=1/3$ requires \emph{hard} analysis and the deep connection between $\ell_n$ and the shape of random Young tableaux (see \cite{Romik}). So-called \emph{soft} arguments were gathered since (\cite{AldousDiaconis,CatorGroeneboom,CatorGroeneboom2}) and lead to a weaker form of $\chi=1/3$ but they are of course still non-trivial.

Various authors have studied geometric properties of maximizing paths in the Hammersley problem and last-passage percolation.
In a non-exhaustive way one can mention: convexity constraint \cite{Barany}, H\"older constraint \cite{BergerTorri,BergerTorri2}, gaps constraint \cite{Gaps}, localization constraint \cite{DeyJosephPeled}, area below the path \cite{Basu}, modulus on continuity \cite{Hammond}.

In particular some of the above works have studied the robustness of the membership to KPZ under such modifications of the constraints.  Our goal here is mainly to illustrate the (non)-robustness of $\chi=1/3$ for the Hammersley process when we impose a Lipschitz condition on paths. It turns out that this problem is easily seen to be exactly solvable, thanks to a coupling with the classical Hammersley problem.

We now formally introduce our model.
Let $\Xi$ be a homogeneous Poisson process with intensity one in the quarter-plane $[0,+\infty)^2$.
Hammersley \cite{HammersleyHistorique} studied the problem of the maximal number of points $L(x,t)$ in an increasing path of points of $\Xi \cap \left([0,x]\times[0,t]\right)$.  He used subadditivity to prove the existence of a constant $\pi/2 \leq c \leq e$ such that $L(t,t)/t \to c$ in probability and conjectured $c=2$. This result
 was first obtained by Ver{\v{s}}ik-Kerov \cite{VersikKerov}. We give here a formulation due to Aldous-Diaconis \cite{AldousDiaconis} which provided a more probabilistic proof by exploiting the geometric construction of Hammersley:
\begin{prop}[\cite{AldousDiaconis}, Th.5 (a)]\label{AldousDiaconis}
Let $a,b>0$. Then
\begin{equation}\label{eq:LimiteL_continu}
f(a,b):=\lim_{t\to +\infty}\frac{L(at,bt)}{t}= 2\sqrt{ab}.
\end{equation}
The convergence holds a.s. and in $L^1$.
\label{Prop:AldousDiaconis}
\end{prop}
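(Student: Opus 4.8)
The plan is to prove the classical Hammersley limit $L(at,bt)/t \to 2\sqrt{ab}$ via the standard subadditivity-plus-scaling route, so let me sketch how I would organize it.

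**Reduction to the diagonal via scaling.** First I would observe that the Poisson process is invariant in distribution under any area-preserving rescaling of the axes. Concretely, the map $(x,y)\mapsto(\lambda x,\,y/\lambda)$ sends a homogeneous intensity-one Poisson process on $[0,\infty)^2$ to another such process, and it preserves the property of a path being increasing (coordinatewise monotone). Applying this with $\lambda=\sqrt{b/a}$ transforms the rectangle $[0,at]\times[0,bt]$ into a square of side $\sqrt{ab}\,t$. Hence
\begin{equation}
L(at,bt) \stackrel{d}{=} L\!\left(\sqrt{ab}\,t,\ \sqrt{ab}\,t\right).
\end{equation}
This reduces everything to proving $L(s,s)/s \to c$ for some constant $c$ (the diagonal case), after which $f(a,b)=\sqrt{ab}\cdot c$ follows immediately by setting $s=\sqrt{ab}\,t$.

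**Existence of the diagonal limit by subadditivity.** For the diagonal I would set $X_{m,n}$ to be the maximal number of points of $\Xi$ in an increasing path inside the square $[m,n]\times[m,n]$ (for integers $m<n$). Concatenating an optimal path in $[m,k]^2$ with one in $[k,n]^2$ produces an admissible increasing path in $[m,n]^2$, which yields the superadditivity relation $X_{m,n}\ge X_{m,k}+X_{k,n}$. The family $(X_{m,n})$ is stationary and has finite expectation (a crude bound on the number of Poisson points in the square controls $\E[X_{0,n}]$), so Kingman's subadditive ergodic theorem applied to $-X$ gives $X_{0,n}/n\to c$ almost surely and in $L^1$ for some constant $c\ge 0$, and a standard interpolation argument upgrades this to $L(s,s)/s\to c$ along real $s$.

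**Identifying the constant $c=2$.** The only genuinely hard step is showing $c=2$ rather than merely $c\in[\pi/2,e]$; subadditivity alone never pins down the constant. Here I would invoke the geometric particle-system construction of Aldous and Diaconis: the level lines of $L$ evolve as a Hammersley interacting particle process, whose stationary measure can be taken to be a Poisson process on the line. Under this stationarity, one tracks the flux of particles and uses a clever "second-class particle" or comparison argument to show that the growth rate of $L$ along the diagonal is exactly $2$. The key computation is that, for the stationary process with Poisson sources of intensity $\alpha$ on one axis and sinks of intensity $1/\alpha$ on the other, the number of particles crossing the diagonal has a controllable mean; optimizing over the free parameter $\alpha$ (which by the scaling invariance above must be $\alpha=1$ on the true diagonal) yields the value $2$. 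This is precisely the content of \cite{AldousDiaconis}, and I would simply cite their hydrodynamic/particle-system analysis for this step rather than reprove it, since it is the deep input.

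**Assembling the statement.** Finally I would combine the three pieces: scaling reduces $f(a,b)$ to the diagonal constant, subadditivity delivers existence of that constant together with almost sure and $L^1$ convergence, and the Aldous–Diaconis particle computation evaluates it as $2$. Substituting $s=\sqrt{ab}\,t$ then gives $L(at,bt)/t\to 2\sqrt{ab}$ in both senses, as claimed. I expect the scaling and subadditivity steps to be routine; the identification $c=2$ is the substantive obstacle and is exactly why the result is a nontrivial theorem rather than a soft consequence of Kingman's theorem.
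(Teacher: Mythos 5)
The paper offers no proof of this proposition: it is quoted verbatim from Aldous--Diaconis as known input, so there is nothing internal to compare against. Your outline (area-preserving scaling to reduce to the diagonal, superadditivity plus Kingman for existence of the limit, and the Hammersley particle-system / sources-and-sinks analysis to identify the constant as $2$) is the standard and correct route, and since you defer the one genuinely hard step --- pinning down $c=2$ --- to the very reference the paper cites, your treatment is consistent with the paper's.
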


Let $0<\alpha<\beta$ be fixed parameters, we introduce the partial order relation $\stackrel{\alpha,\beta}{\preccurlyeq}$ in $[0,+\infty)^2$ defined by
$$
(x,y)\stackrel{\alpha,\beta}{\preccurlyeq}(x',y') \Leftrightarrow 
\begin{cases}
&x \leq x',\\
&\alpha \leq \frac{y'-y}{x'-x}\leq \beta.
\end{cases}
$$
In the extremal case $\alpha=0$, $\beta=+\infty$ then $\stackrel{0,+\infty}{\preccurlyeq}$ is the classical partial order on  $[0,+\infty)^2$ defined by $\{x \leq x',y\leq y'\}$.
For any domain $D \subset [0,+\infty)^2$, we consider the random variable $L^{\bf \alpha,\beta}(D)$ given by the length of the longest chain in $\Xi \cap D$  with respect to $\stackrel{\alpha,\beta}{\preccurlyeq}$:
\begin{multline*}
L^{\alpha,\beta}(D)=
\max\bigg\{L; \hbox{ there are }(u_1,v_1),\dots,(u_L,v_L)\in \Xi \cap D \text{ such that }\\
(u_1,v_1) \stackrel{\alpha,\beta}{\preccurlyeq} (u_2,v_2)  \stackrel{\alpha,\beta}{\preccurlyeq} \dots  \stackrel{\alpha,\beta}{\preccurlyeq} (u_L,v_L)
\bigg\}.
\end{multline*}
When $D$ is a rectangle of the form $[0,x]\times[0,t]$, we just write $L^{\alpha,\beta}(x,t)$ instead of $L^{\alpha,\beta}(D)$.
In the case $\alpha=0$, $\beta=+\infty$, we recover the usual quantity $L^{0, +\infty}(x,t):=L(x,t)$ studied by Hammersley.

\begin{figure}
    \begin{minipage}[c]{.46\linewidth}
        \centering
        \includegraphics[width=7cm]{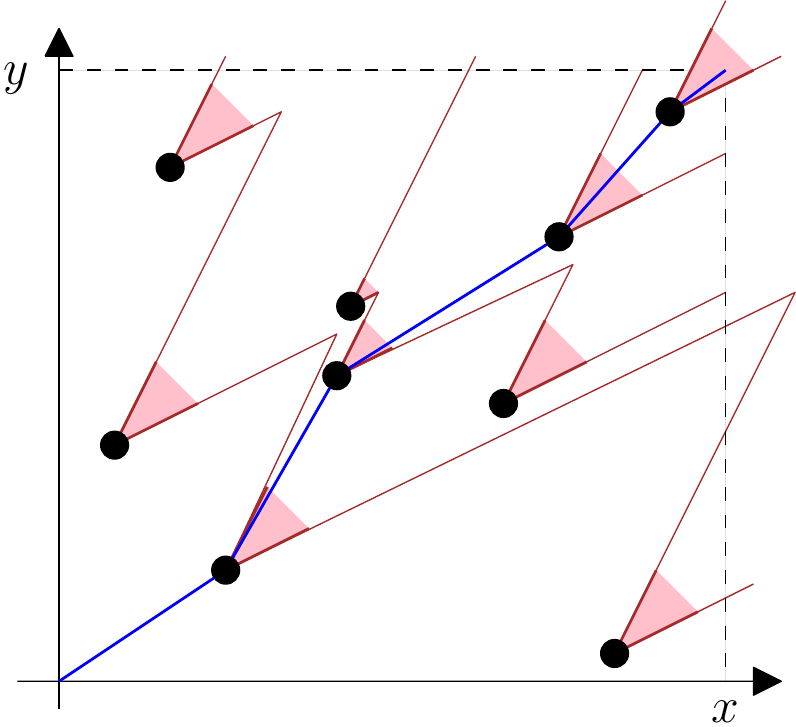}
        \caption{An example where $L^{\alpha,\beta}(x,y)=4$. Every point $(u,v)$ of $\Xi$ (represented by $\bullet$'s) is drawn with its set of majorants for $\stackrel{\alpha,\beta}{\preccurlyeq}$. One maximizing path is drawn in blue.}
\label{fig:DefinitionModele}
    \end{minipage}
    \hfill%
    \begin{minipage}[c]{.46\linewidth}
        \centering
        \includegraphics[width=7cm]{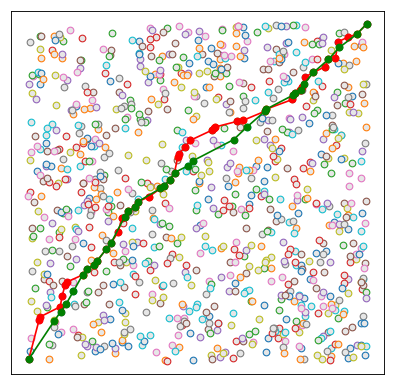}
        \caption{The maximazing paths through $n=1000$ points (red: $\alpha=1/6,\beta=6$, green: $\alpha=1/2,\beta=2$.)}
    \end{minipage}
\end{figure}

The first  result of this paper is that we can also explicitly compute the limit of $L^{\alpha,\beta}(at,bt)/t$.
\begin{theo}[Limiting shape]\label{MaxiTheoreme}
For every $a,b>0$, every $\beta>\alpha\ge 0$, we have the following almost-sure limit: 
$$
f^{\alpha,\beta}(a,b):=\lim_{t\to \infty}\frac{L^{\alpha,\beta}(at,bt)}{t}=
\begin{cases}
a\sqrt{\beta-\alpha}
&\text{ if }\frac{b}{a} > \frac{\alpha+\beta}{2},\\
\displaystyle{\frac{2\sqrt{(\beta a-b)( b-a\alpha)}}{\sqrt{\beta -\alpha}}} &\text{ if }
 \frac{\alpha+\beta}{2} \geq \frac{b}{a} \geq  \frac{2}{1/\beta+1/\alpha} ,\\
b\sqrt{\frac{1}{\alpha}-\frac{1}{\beta}}
 &\text{ if }\frac{b}{a} < \frac{2}{1/\beta+1/\alpha}.
\end{cases}
$$
\end{theo}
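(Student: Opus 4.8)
My plan is to turn the Lipschitz constraint into the usual coordinatewise order by a linear change of variables, and then to reduce everything to Proposition~\ref{Prop:AldousDiaconis} on a suitable (tilted) domain. Define $\Phi\colon (x,y)\mapsto (y-\alpha x,\ \beta x-y)$. A direct computation shows that for $\beta>\alpha$ one has $(x,y)\stackrel{\alpha,\beta}{\preccurlyeq}(x',y')$ if and only if $\Phi(x,y)\le\Phi(x',y')$ coordinatewise: the two inequalities $u\le u'$ and $v\le v'$ encode respectively $\alpha(x'-x)\le y'-y$ and $y'-y\le\beta(x'-x)$, and summing them forces $x\le x'$. Since $\Phi$ is linear with $|\det D\Phi|=\beta-\alpha$, the mapping theorem for Poisson processes shows that $\Phi(\Xi)$ is a homogeneous Poisson process of intensity $\lambda:=1/(\beta-\alpha)$, and $\Phi$ carries chains for $\stackrel{\alpha,\beta}{\preccurlyeq}$ bijectively onto chains for the usual order. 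Hence $L^{\alpha,\beta}(at,bt)$ has the same law as the longest (classically) increasing chain of a rate-$\lambda$ Poisson process inside the parallelogram $\Pi_t:=\Phi([0,at]\times[0,bt])$, with vertices $(0,0)$, $(-\alpha a,\beta a)t$, $(b,-b)t$ and $(b-\alpha a,\beta a-b)t$. This is the coupling announced in the abstract: it is exactly the classical Hammersley problem, but on a tilted parallelogram and with a modified intensity.

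The next step is to upgrade Proposition~\ref{Prop:AldousDiaconis} from rectangles to $\Pi_t=t\,\Pi$ (with $\Pi=\Pi_1$). I would establish the variational identity
\[
f^{\alpha,\beta}(a,b)=\sup\Big\{\,2\sqrt{\lambda\,(u_E-u_S)(v_E-v_S)}\ :\ S,E\in\Pi,\ S\le E\ \text{coordinatewise}\,\Big\}.
\]
For the lower bound I would inscribe, along a straight segment $[S,E]$ realizing the supremum, an axis-parallel rectangle of side lengths $\approx(u_E-u_S)$ and $(v_E-v_S)$ inside $\Pi$; such a rectangle exists because $\Pi$ is convex and, as one checks case by case, its sides can be placed between the relevant pair of parallel edges of $\Pi$. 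Applying Proposition~\ref{Prop:AldousDiaconis} after rescaling to intensity one gives the bound. The matching upper bound comes from covering $\Pi_t$ by a fine grid of small axis-parallel rectangles: an increasing chain visits a monotone sequence of cells, its length is at most the sum of the cell-wise longest chains controlled by Proposition~\ref{Prop:AldousDiaconis}, and the Cauchy--Schwarz inequality $\sum_i\sqrt{p_iq_i}\le\sqrt{(\sum_i p_i)(\sum_i q_i)}$ collapses this sum into the single-segment value. The same inequality shows that a single straight segment is always optimal, which is why only pairs $(S,E)$ appear in the supremum.

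It then remains to solve the geometric optimization: maximize $(u_E-u_S)(v_E-v_S)$ over $S\le E$ in $\Pi$. Pushing $E$ onto the upper-right boundary $P\!-\!R\!-\!Q$ and $S$ onto the lower-left boundary $P\!-\!O\!-\!Q$ (where $O=\Phi(0,0)$ and $R=\Phi(at,bt)$ at scale one), the problem reduces to a one-parameter calculus exercise on each edge. The unconstrained critical point lands on edge $PR$ precisely when $b/a\ge(\alpha+\beta)/2$, forcing $\Delta u=\Delta v=(\beta-\alpha)a/2$ and the value $a\sqrt{\beta-\alpha}$ (Case 1); it lands on edge $PO$ precisely when $b/a\le 2/(1/\alpha+1/\beta)$, giving $b\sqrt{1/\alpha-1/\beta}$ (Case 3); and in the intermediate regime the constrained optimum is the full diagonal $S=O$, $E=R$, i.e.\ $\Delta u=b-\alpha a$ and $\Delta v=\beta a-b$, producing $2\sqrt{(\beta a-b)(b-\alpha a)}/\sqrt{\beta-\alpha}$ (Case 2). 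The three expressions agree at the thresholds $b/a=(\alpha+\beta)/2$ and $b/a=2/(1/\alpha+1/\beta)$, which confirms continuity of the limiting shape.

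Everything else being either the clean linear-algebra coupling or routine optimization, the main obstacle is the variational identity, that is, the passage from rectangles to the tilted parallelogram. The delicate points are the boundary effects in the covering argument (cells straddling $\partial\Pi_t$ must be shown to contribute negligibly) and the verification that the extremal axis-parallel rectangle genuinely fits inside $\Pi$ in each of the three regimes. The super-/sub-additive structure of longest chains, combined with Proposition~\ref{Prop:AldousDiaconis}, is exactly what I expect to make these estimates go through.
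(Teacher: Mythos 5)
Your overall route is the same as the paper's: a linear map turning $\stackrel{\alpha,\beta}{\preccurlyeq}$ into the coordinatewise order (the paper normalizes its map $\phi$ in \eqref{eq:DefiPhi2} to have determinant $1$ so the intensity stays $1$, while you carry the factor $\lambda=1/(\beta-\alpha)$ --- immaterial), then a variational identity for the classical Hammersley problem on the image parallelogram, then an elementary optimization. Your optimization is correct: the three threshold conditions and the three values match Lemma \ref{Lem:geometrique_rho}. Your supremum over ordered pairs $S\le E$ is equivalent to the paper's supremum over inscribed rectangles, because both families of sides of $\Pi$ have negative slope, so the axis-parallel rectangle spanned by an ordered pair of points of $\Pi$ automatically lies in $\Pi$; this also makes your lower bound correct and essentially identical to \eqref{eq:MinoRectangleInclus}.

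The gap is in your upper bound for the variational identity, which is the only non-trivial step. An increasing chain does \emph{not} visit a strictly monotone sequence of cells: it may have points in several cells of the same column, so summing the cell widths $p_i$ over visited cells gives strictly more than $u_E-u_S$. Concretely, with square cells of side $\eps t$, a staircase through $k_1$ columns and $k_2$ rows can contain $k=k_1+k_2-1$ cells; your bound $\sum_i 2\sqrt{\lambda p_iq_i}=2\sqrt{\lambda}\,k\eps t$ then reads $2\sqrt{\lambda}(\Delta u+\Delta v)$, and the Cauchy--Schwarz step is an equality (all cells have $p_i=q_i=\eps t$) so it does not improve this to $4\sqrt{\lambda\,\Delta u\,\Delta v}$. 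The two differ whenever $\Delta u\neq\Delta v$, i.e.\ precisely in the generic central case, so the argument as written does not close. Two repairs are possible. One is the standard strip decomposition: cut $\Pi_t$ into vertical strips of width $\eps t$, bound the chain's restriction to each strip by the longest chain in the bounding rectangle of that restriction (with grid-rounded heights), so that the widths genuinely sum to $\Delta u$ and the heights to $\Delta v+O(\eps t)$, and only then apply Cauchy--Schwarz, with an a.s.\ union bound over the finitely many grid rectangles. The other is what the paper does in Proposition \ref{Prop:OptiLosange}: discretize the boundary of $\mathcal{P}_t$ at scale $t^{-2}$, observe that outside boundary triangles of total area $O(1/t)$ any increasing path is contained in a \emph{single} boundary-anchored rectangle $R_{i,j}$, and conclude by a union bound over the $O(t^6)$ rectangles using the polynomial deviation bound of Baik--Deift--Johansson (Proposition \ref{Prop:Flu_Ln_continu}). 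The latter has the added benefit of producing the quantitative estimate \eqref{eq:Majot7}, which the paper reuses for the fluctuation results; a purely asymptotic covering argument would not.
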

\noindent \emph{(Basic calculus shows that we always have $\frac{\alpha+\beta}{2} \geq  \frac{2}{1/\beta+1/\alpha}$.)}

In the sequel, the condition $\frac{\alpha+\beta}{2} \geq \frac{b}{a} \geq  \frac{2}{1/\beta+1/\alpha}$ will be referred to as the \emph{central case}.

A key consequence of Theorem \ref{MaxiTheoreme} is that the limiting shape is strictly convex exactly within the central case.
The proof of Theorem \ref{MaxiTheoreme} is only simple calculus once we notice a coupling with the classical Hammersley problem (see Proposition \ref{Prop:Coupling2} below).
What is more appealing is that we obtain non-trivial dichotomies for the shapes of paths and the fluctuations.
The following result states that in the central case all the maximizing paths  are localized along the main diagonal $\set{y=xb/a}$ (asymptotically with high probability) while in the two non-central cases, optimal paths have slope respectively $\frac{\alpha+\beta}{2}$ or $\frac{2}{1/\beta+1/\alpha}$. In order to state the result, we set for $R >0$ and a \emph{slope} $\theta \in \bbR$
$$
D_{R,\theta} =\set{(x,y)\in \R_+^2\;;\; |y-\theta x|\leq R}.
$$

\begin{theo}[Localization of paths]\label{th:Localization}
\renewcommand {\theenumi}{\emph{\roman{enumi})}}
\ 
\begin{enumerate}
\item If $\frac{\alpha+\beta}{2} \geq \frac{b}{a} \geq  \frac{2}{1/\beta+1/\alpha}$ then for every $\delta >0$
$$
\mathbb{P}
\begin{pmatrix}
\text{ There is an optimal path for }L^{\alpha,\beta}(at,bt)\\
\text{ not wholly contained in }D_{t\delta,\tfrac{b}{a}}
\end{pmatrix}
 \stackrel{t\to +\infty}{\to} 0.
$$
\item If $\frac{b}{a} > \frac{\alpha+\beta}{2}$ then for every $\delta >0$
$$
\mathbb{P}
\begin{pmatrix}
\text{ There is an optimal path for }L^{\alpha,\beta}(at,bt)\\
\text{ not wholly contained in }(0,c) + D_{t\delta,\tfrac{\alpha+\beta}{2}}\text{ for some }c\geq 0
\end{pmatrix}
\stackrel{t\to +\infty}{\to} 0.
$$
\item If $\frac{b}{a} <  \frac{2}{1/\beta+1/\alpha}$ then for every $\delta >0$
$$
\mathbb{P}
\begin{pmatrix}
\text{ There is an optimal path for }L^{\alpha,\beta}(at,bt)\\
\text{ not wholly contained in }(c,0) + D_{t\delta,\tfrac{2}{1/\beta+1/\alpha}}\text{ for some }c\geq 0
\end{pmatrix}
 \stackrel{t\to +\infty}{\to} 0.
$$
\end{enumerate}
\end{theo}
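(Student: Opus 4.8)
The plan is to transport everything to the \emph{classical} Hammersley model through the coupling of Proposition~\ref{Prop:Coupling2}, to prove localization there, and to read off the three statements by the inverse change of variables. Consider the linear map $T(x,y)=(\beta x-y,\,y-\alpha x)$. One checks at once that $(x,y)\stackrel{\alpha,\beta}{\preccurlyeq}(x',y')$ holds if and only if $T(x,y)\le T(x',y')$ for the usual product order, so $T$ turns $\stackrel{\alpha,\beta}{\preccurlyeq}$-chains into classical increasing chains; since $\det T=\beta-\alpha$, it maps $\Xi$ to a homogeneous Poisson process of intensity $\rho:=1/(\beta-\alpha)$ and the rectangle $[0,at]\times[0,bt]$ to a parallelogram $\Pi_t$. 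Hence an optimal path for $L^{\alpha,\beta}(at,bt)$ is exactly the $T^{-1}$-image of a longest classical increasing chain in $\Pi_t$, and it is enough to localize the latter.

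I would first record a uniform first-order estimate. Let $\ell(S,E)$ denote the length of the longest classical increasing chain between two points $S\le E$ with $[S,E]\subseteq\Pi_t$. Proposition~\ref{Prop:AldousDiaconis}, together with superadditivity and monotonicity, gives $\ell(S,E)=2\sqrt{\rho\,(\xi_E-\xi_S)(\eta_E-\eta_S)}+o(t)$ with high probability, uniformly over $S,E$ in a fixed grid of $\Pi_t$. Cutting any increasing chain at its crossings with a suitable family of parallel lines and summing these estimates, the Cauchy--Schwarz inequality $\sum_k\sqrt{\Delta\xi_k\,\Delta\eta_k}\le\sqrt{(\sum_k\Delta\xi_k)(\sum_k\Delta\eta_k)}$ yields, for a chain with endpoints $S$ and $E$,
\begin{equation*}
L^{\alpha,\beta}(at,bt)\ \le\ 2\sqrt{\rho\,(\xi_E-\xi_S)(\eta_E-\eta_S)}+o(t),
\end{equation*}
the gap to equality being a defect that measures how far the successive increments $(\Delta\xi_k,\Delta\eta_k)$ are from being mutually proportional. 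Maximizing the right-hand side over $S$ on the lower-left boundary and $E$ on the upper-right boundary of $\Pi_t$ is an elementary optimization under the two linear constraints cutting out $\Pi$, and it reproduces exactly the three regimes of Theorem~\ref{MaxiTheoreme}: the maximizer is the segment from $O':=T(0,0)$ to $P':=T(at,bt)$ in the central case, while in the two extreme cases the optimal increment direction is, respectively, the diagonal $(1,1)$ (constrained by the images $T(\{0\}\times[0,bt])$ and $T(\{at\}\times[0,bt])$ of the vertical edges) and the direction $(\beta,\alpha)$ (constrained by the images $T([0,at]\times\{0\})$ and $T([0,at]\times\{bt\})$ of the horizontal edges). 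Pulling these back through $T^{-1}$ gives precisely the slopes $\tfrac{b}{a}$, $\tfrac{\alpha+\beta}{2}$ and $\tfrac{2}{1/\beta+1/\alpha}$ appearing in the three cases.

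It remains to convert the equality analysis into localization. In the central case the maximizing pair $(O',P')$ is unique, and $\sqrt{\xi\eta}$ is strictly concave in every direction transverse to the rays; hence any chain visiting a point $M$ at distance $\ge t\delta$ from the segment $O'P'$ satisfies, after decomposing at that point, $\ell(S,M)+\ell(M,E)\le 2\sqrt{\rho\,\xi_{P'}\eta_{P'}}-\varepsilon(\delta)\,t$, which is strictly suboptimal; applying $T^{-1}$ this is the statement that the path hugs the diagonal $\{y=xb/a\}$, i.e.\ \emph{i)}. In the two extreme cases the maximizer is no longer unique: there is a one-parameter family of parallel optimal segments, indexed by the value of the linear coordinate that stays constant along the optimal direction, namely (up to a constant factor) $y-\tfrac{\alpha+\beta}{2}x$ in case \emph{ii)} and $y-\tfrac{2}{1/\beta+1/\alpha}x$ in case \emph{iii)}. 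Statements \emph{ii)} and \emph{iii)} assert exactly that an optimal chain stays close to a \emph{single} member of this family, and this is where the Cauchy--Schwarz defect enters: if the chain's offset coordinate varied by more than $t\delta$ between two of its points, then on the intermediate piece it would travel in a direction bounded away from the optimal one and would therefore lose at least $\varepsilon(\delta)\,t$ points, contradicting optimality. Reading ``offset nearly constant'' through $T^{-1}$ is precisely the inclusion in $(0,c)+D_{t\delta,\tfrac{\alpha+\beta}{2}}$, resp.\ $(c,0)+D_{t\delta,\tfrac{2}{1/\beta+1/\alpha}}$, for the corresponding $c\ge 0$.

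To turn these bounds into ``with high probability every optimal path is localized'', I would fix a $\delta$-net of candidate far points (resp.\ far offset levels) in $\Pi_t$, apply the uniform estimate at the finitely many associated sub-rectangles, and conclude by a union bound that with probability tending to one no optimal chain passes through a far point or drifts in offset by more than $t\delta$. I expect the quantitative concavity bound to be the main obstacle: one must control the defect $\varepsilon(\delta)$ uniformly and \emph{simultaneously} over all pairs of points the chain might use, which is exactly what the grid discretization and the uniform convergence in Proposition~\ref{Prop:AldousDiaconis} are meant to provide. The two extreme cases are more delicate than the central one, because the non-uniqueness of the optimizer forces one to rule out a slow drift of the offset rather than a single macroscopic excursion.
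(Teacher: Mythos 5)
Your proposal follows essentially the same route as the paper: transport to the classical Hammersley problem in a parallelogram via the linear coupling, discretize into a finite grid, use the strict concavity of $(a,b)\mapsto 2\sqrt{ab}$ (your Cauchy--Schwarz defect is exactly the paper's observation that $\tilde f(a,b)+\tilde f(\alpha-a,\beta-b)+\tilde f(\sigma-\alpha,\rho-\beta)$ is strictly maximized only when the intermediate points lie on a line of the optimal slope) to extract a macroscopic loss $\varepsilon(\delta)t$ for any deviating chain, and finish with a union bound over the finitely many grid events. Your handling of the non-central cases via the offset coordinate and its admissible drift matches the paper's strips $\bar D_{c,\delta}$ and the finitely many discretized values $c_1,\dots,c_{n_\eps}$, so the argument is sound and structurally identical.
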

(Three cases of Theorem \ref{th:Localization} are illustrated in the rightmost column of table p.\pageref{Tableau}.)

It is a folklore result that in the classical Hammersley problem (\emph{i.e.} item (i) above with $\alpha=0, \beta=+\infty$) asymptotically all the optimizing paths concentrate on the diagonal $y=xb/a$. We cannot date back the first appearance of this result but it can be seen as a consequence of a more general result by Deuschel-Zeitouni \cite[Th.2 (ii)]{DeuschelZeitouni}. 
In the case $b/a >\beta$ the diagonal is not an admissible path so it is obvious that optimizing paths will not concentrate on it. However in the intermediate case $\beta> b/a > \frac{\alpha+\beta}{2}$ the diagonal is admissible but Theorem \ref{th:Localization} shows that paths close to the diagonal are not optimal.

\begin{figure}
\begin{center}
\includegraphics[width=7cm]{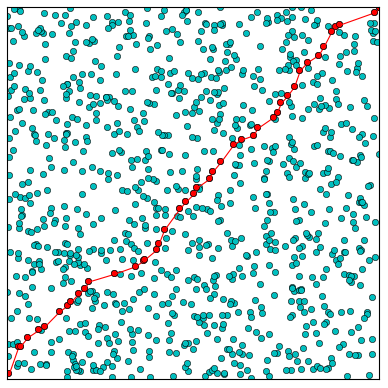}
\ \hspace{1cm}\ 
\includegraphics[width=7cm]{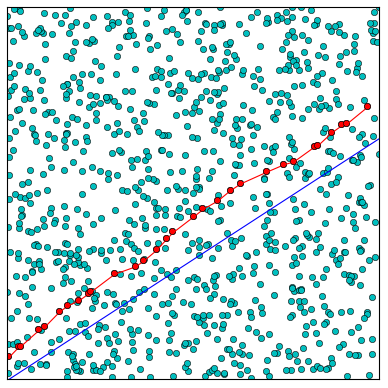}\\

\caption{Illustration of Theorem \ref{th:Localization}. Left: A maximizing path through $n=1000$ points with $\alpha=0.2,\beta=3$ (central case). Right: A maximizing path with $\alpha=0.2,\beta=1.1$ (non-central case). In blue: a straight line with slope $(\alpha+\beta)/2$.}
\end{center}
\end{figure}

We now discuss the fluctuations of $L^{\alpha,\beta}(at,bt)$. In 1999, Baik-Deift-Johansson have identified the fluctuations of $L(at,bt)$:

\begin{prop}[Cube root fluctuations for the Hammersley problem (\cite{BDK} Th.1.1)]\label{Prop:BDJ}\ 
For every $a,b>0$, we have the convergence in law 
\begin{equation*}
\frac{L(at,bt) -2\sqrt{ab}t}{(\sqrt{ab}t)^{1/3}}
\stackrel{t\to +\infty}{\to} \mathrm{TW},
\end{equation*}
where $\mathrm{TW}$ is the  standard  Tracy-Widom distribution.
\end{prop}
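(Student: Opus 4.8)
The statement is quoted from \cite{BDK}, so the plan is to recall the structure of their argument and the route to the Tracy--Widom limit. First I would eliminate the parameters $a,b$ by scaling. The map $(x,y)\mapsto(x\sqrt{b/a},\,y\sqrt{a/b})$ sends the rectangle $[0,at]\times[0,bt]$ to the square $[0,\sqrt{ab}\,t]^2$, preserves the standard partial order (it multiplies each coordinate by a positive constant), and, by the scaling invariance of the homogeneous Poisson process, maps $\Xi$ to another intensity-one Poisson process. Hence increasing chains are in bijection and $L(at,bt)\stackrel{d}{=}L(\sqrt{ab}\,t,\sqrt{ab}\,t)$. It therefore suffices to prove the result on the diagonal, i.e.\ that $\big(L(s,s)-2s\big)/s^{1/3}\to\mathrm{TW}$, and then substitute $s=\sqrt{ab}\,t$.

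Next I would reduce to the Poissonized longest increasing subsequence. The number of points of $\Xi$ in $[0,s]^2$ is Poisson with mean $s^2$, and conditionally on their number the points are i.i.d.\ uniform; ranking them by abscissa and by ordinate produces a uniform random permutation whose longest increasing subsequence equals the longest increasing chain. Thus $L(s,s)\stackrel{d}{=}\ell_{N}$ with $N\sim\mathrm{Poisson}(s^2)$ --- precisely the Poissonized longest increasing subsequence, so that (in contrast with the fixed-size permutation model) no de-Poissonization step is needed. Via the RSK correspondence $\ell_N$ is the length $\lambda_1$ of the first row of a Young diagram $\lambda$, and after Poissonization $\lambda$ follows the Poissonized Plancherel measure with parameter $\theta=s^2$, for which $\mathbb{P}(\lambda_1\le n)=\mathbb{P}(L(s,s)\le n)$.

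The crux --- and the step I expect to be by far the hardest --- is the asymptotics of $\mathbb{P}(\lambda_1\le n)$ as $\theta\to\infty$ in the edge window $n\approx 2\sqrt\theta+x\,\theta^{1/6}$. Here I would follow one of two equivalent routes. Either one expresses this probability, via Gessel's formula, as a Toeplitz determinant with symbol $e^{2\sqrt\theta\cos\varphi}$ and performs a Deift--Zhou steepest-descent analysis of the associated Riemann--Hilbert problem for orthogonal polynomials on the unit circle (the original \cite{BDK} route). Or one uses that the point configuration $\{\lambda_i-i\}_{i\ge 1}$ is a determinantal process governed by the discrete Bessel kernel; the largest particle $\lambda_1-1$ sits at the right edge, and under the above scaling the discrete Bessel kernel converges to the Airy kernel, whence the gap probability $\mathbb{P}(\lambda_1\le n)$ converges to the Fredholm determinant of the Airy kernel, i.e.\ to the Tracy--Widom distribution $F_2$. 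Either way the output is
\[
\mathbb{P}\!\left(\frac{\lambda_1-2\sqrt\theta}{\theta^{1/6}}\le x\right)\xrightarrow[\theta\to\infty]{}F_2(x).
\]

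Finally, setting $\theta=s^2$ gives $2\sqrt\theta=2s$ and $\theta^{1/6}=s^{1/3}$, so $\big(L(s,s)-2s\big)/s^{1/3}\to\mathrm{TW}$; combined with the scaling reduction $s=\sqrt{ab}\,t$ this yields the stated convergence for general $a,b$. The only genuinely delicate input is the edge asymptotics of the Toeplitz determinant / discrete Bessel kernel; the scaling and combinatorial reductions are routine, which is why the result can be invoked here as a black box.
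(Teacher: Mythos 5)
Your outline is correct, and it matches the paper's treatment of this statement: the paper offers no proof at all, importing the result directly as \cite{BDK}, Th.~1.1, exactly as you do for the hard edge asymptotics (your scaling reduction $L(at,bt)\stackrel{d}{=}L(\sqrt{ab}\,t,\sqrt{ab}\,t)$, the Poissonization/RSK identification with the Plancherel measure, and the Toeplitz-determinant or discrete-Bessel-kernel routes are all accurately described). The only point worth flagging is cosmetic: \cite{BDK} Th.~1.1 is literally stated for the de-Poissonized permutation quantity $\ell_N$, so to get the Poisson-process statement one either invokes their Poissonized intermediate result directly, as you do, or transfers back via the de-Poissonization arguments of Section~8 of \cite{BDK} (cf.\ the paper's footnote to Proposition~\ref{Prop:Flu_Ln_continu}); both are standard.
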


We show that the same asymptotic behaviour is true within the central case. Contrary to Theorems \ref{MaxiTheoreme} and \ref{th:Localization} we are not able to characterize the behaviour on the boundaries  $b/a=\tfrac{\alpha+\beta}{2}$ and $b/a= \tfrac{2}{1/\beta+1/\alpha}$.

\begin{theo}[Cube root fluctuations for $L^{\alpha,\beta}$]\label{Th:FluctuLipschitz}
\renewcommand {\theenumi}{\emph{\roman{enumi})}}
\ 
\begin{enumerate}
\item If $\frac{\alpha+\beta}{2} > \frac{b}{a}> \frac{2}{1/\beta+1/\alpha}$, we have the convergence in law 
$$
\frac{L^{\alpha,\beta}(at,bt)-t f^{\alpha,\beta}(a,b)}
{\sigma_{a,b,\beta,\alpha}^{1/3} \times t^{1/3}}
\stackrel{t\to +\infty}{\to} \mathrm{TW},
$$
where $\mathrm{TW}$ is the standard  Tracy-Widom distribution and
$$
\sigma_{a,b,\beta,\alpha}= \frac{1}{2}f^{\alpha,\beta}=\frac{\sqrt{(\beta a-b)( b-a\alpha)}}{\sqrt{\beta -\alpha}}.
$$
\item If $\frac{b}{a} > \frac{\beta+\alpha}{2}$ or  $\frac{b}{a} < \frac{2}{1/\beta+1/\alpha}$, then
\begin{align*}
\frac{L^{\alpha,\beta}(at,bt)-t f^{\alpha,\beta}(a,b)}
{t^{1/3}} &\stackrel{t\to +\infty}{\to} +\infty \text{ in prob.,}\\
\frac{L^{\alpha,\beta}(at,bt)-t f^{\alpha,\beta}(a,b)}
{t^{1/3+\eps}} &\stackrel{t\to +\infty}{\to} 0 \text{ in prob. for every $\eps$.}
\end{align*}
\end{enumerate}
\end{theo}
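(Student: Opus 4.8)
The plan is to transport everything to the classical Hammersley problem through the linear coupling. Writing $u=\beta x-y$ and $v=y-\alpha x$, one checks that $(x,y)\stackrel{\alpha,\beta}{\preccurlyeq}(x',y')$ is equivalent to $u\le u'$ and $v\le v'$, so that $L^{\alpha,\beta}(at,bt)$ equals the classical longest increasing chain $L^{(\rho)}(P_t)$ for the image point process, which is Poisson of intensity $\rho=1/(\beta-\alpha)$, inside the parallelogram $P_t$ with vertices $O=(0,0)$, $(\beta a t,-\alpha a t)$, $(-bt,bt)$ and $P_3=((\beta a-b)t,(b-\alpha a)t)$ (this is the coupling of Proposition \ref{Prop:Coupling2}). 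The central case is precisely the regime in which $P_3$ has both coordinates positive and the inscribed rectangle $R^-=[0,(\beta a-b)t]\times[0,(b-\alpha a)t]$ is contained in $P_t$; a direct computation gives $2\sqrt{\rho(\beta a-b)(b-\alpha a)}\,t=f^{\alpha,\beta}(a,b)\,t$ and $(\rho(\beta a-b)(b-\alpha a)t^2)^{1/6}=\sigma_{a,b,\beta,\alpha}^{1/3}t^{1/3}$, so the scaling constants match automatically.

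For item i) I would establish matching bounds. The lower bound is immediate: since $R^-\subseteq P_t$, monotonicity gives $L^{(\rho)}(P_t)\ge L^{(\rho)}(R^-)$, and applying Proposition \ref{Prop:BDJ} after the area-preserving rescaling that turns intensity $\rho$ into intensity $1$ shows $(L^{(\rho)}(R^-)-f^{\alpha,\beta}t)/(\sigma^{1/3}_{a,b,\beta,\alpha}t^{1/3})\to \mathrm{TW}$. For the upper bound the key observation is that any increasing chain joining $O$ to $P_3$ is automatically contained in $R^-$, since $\{p:O\le p\le P_3\}=R^-$; thus the ``ears'' $P_t\setminus R^-$ can only be used by chains whose endpoints are off the corners. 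I would write $L^{(\rho)}(P_t)=\max_{s\le e}L_{s\to e}(P_t)$, discretise the admissible pairs $(s,e)$ on a polynomial grid, and bound each point-to-point value by an upper-tail moderate-deviation estimate. Because in the \emph{open} central case the map $(s,e)\mapsto 2\sqrt{\rho(e_u-s_u)(e_v-s_v)}$ has a strict maximum $f^{\alpha,\beta}t$ at $(O,P_3)$ with quadratic decay (the derivative along the edge towards $(\beta at,-\alpha at)$ has sign $2b-(\alpha+\beta)a<0$, which degenerates exactly on the boundary $b/a=\tfrac{\alpha+\beta}{2}$, explaining the strict inequalities in the statement), a union bound shows that only endpoints within $o(t^{2/3})$ of the corners can contribute at scale $t^{1/3}$; for those the chain stays in an $o(t^{2/3})$-neighbourhood of $R^-$, whose longest chain differs from $L^{(\rho)}(R^-)$ by $o(t^{1/3})$. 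This pins the upper bound to the same $\mathrm{TW}$ limit.

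For item ii) consider first $b/a>\tfrac{\alpha+\beta}{2}$. Here the maximisers of $(s,e)\mapsto 2\sqrt{\rho(e_u-s_u)(e_v-s_v)}$ form a one-parameter family of parallel diagonal segments (direction $(1,1)$ in the $(u,v)$-plane, slope $\tfrac{\alpha+\beta}{2}$ in the original plane), all of common value $f^{\alpha,\beta}t$, which is exactly the free offset $c$ of Theorem \ref{th:Localization} ii). For the divergence $L/t^{1/3}\to\infty$ I would fit $N=\lfloor t^{1/4}\rfloor$ disjoint tubes of width of order $t^{3/4}$ around $N$ such parallel segments inside $P_t$; by independence of the Poisson process on disjoint regions the restricted point-to-point values $L_1,\dots,L_N$ are independent, each with mean $f^{\alpha,\beta}t-o(t^{1/3})$ (the width $t^{3/4}\gg t^{2/3}$ absorbs the transversal fluctuations) and Tracy--Widom fluctuations of order $t^{1/3}$. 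Since the maximum of $N$ independent $\mathrm{TW}$-type variables exceeds their common centering by an amount of order $t^{1/3}(\log N)^{2/3}\gg t^{1/3}$, one gets $L^{(\rho)}(P_t)\ge\max_i L_i$ with $(L^{(\rho)}(P_t)-f^{\alpha,\beta}t)/t^{1/3}\to+\infty$. For the second limit I would again discretise endpoints: now $2\sqrt{\rho(e_u-s_u)(e_v-s_v)}\le f^{\alpha,\beta}t$ for \emph{all} admissible $(s,e)$ by Theorem \ref{MaxiTheoreme}, so a union bound over the $t^{O(1)}$ cells with deviation parameter $\lambda=\log t$ yields $L^{(\rho)}(P_t)\le f^{\alpha,\beta}t+O(t^{1/3}\log t)$, whence the ratio over $t^{1/3+\eps}$ tends to $0$. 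The remaining case $b/a<\tfrac{2}{1/\beta+1/\alpha}$ follows by the reflection $(x,y)\mapsto(y,x)$, which exchanges $(\alpha,\beta,a,b)$ with $(1/\beta,1/\alpha,b,a)$ and maps it to the case just treated.

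The main obstacle is the upper bound of item i): upgrading the soft localisation of Theorem \ref{th:Localization} into the quantitative statement that endpoints off the corners contribute at a strictly lower scale, and doing so with enough precision to recover the exact $\mathrm{TW}$ limit rather than merely the $t^{1/3}$ order. This rests on sharp upper-tail moderate-deviation bounds for the classical point-to-point Hammersley problem, of the form $\P(L_{s\to e}\ge \mu+x)\le \exp(-c(x\mu^{-1/3})^{3/2})$, together with a matching lower bound on the upper tail for the divergence in item ii); assembling these uniformly over a polynomial grid of endpoints is where the real care is needed.
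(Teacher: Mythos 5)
Your treatment of item \emph{(ii)} and of the lower bound in item \emph{(i)} matches the paper: the coupling to a parallelogram, the inscribed rectangle $[0,\sigma t]\times[0,\rho t]$ for the lower bound, the family of $\sim t^{1/4}$ disjoint $t^{3/4}$-wide tubes around parallel maximizing diagonals (justified by Johansson's transversal-fluctuation bound) for the divergence in \emph{(ii)}, and the discretization-plus-moderate-deviation union bound for the $t^{1/3+\eps}$ upper bound. These parts are fine.

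The genuine gap is exactly where you flag ``the main obstacle'', and the tools you propose there do not close it. A union bound with marginal upper-tail estimates handles inscribed rectangles $\mathcal{R}_{uv}$ whose corners are displaced by $u+v\gtrsim t^{1/3+\eta}$ from the optimal one (this is the paper's Case (A), with $\kappa=1/3+1/32$): there the mean deficit $\asymp u+v$ beats the fluctuation scale. But for displacements $u+v=O(t^{1/3+\eta})$ the means of $L(\mathcal{R}_{uv})$ and $L(\sigma t,\rho t)$ differ by only $O(t^{1/3+\eta})$ while each variable has fluctuations of order $t^{1/3}$, so \emph{marginal} moderate-deviation bounds of the form $\P(L_{s\to e}\ge \mu+x)\le \exp(-c(x\mu^{-1/3})^{3/2})$ say nothing about the event $\{L(\mathcal{R}_{uv})\ge L(\sigma t,\rho t)+o(t^{1/3})\}$ --- you need joint control of two strongly correlated passage times, and your assertion that the chain then ``differs from $L^{(\rho)}(R^-)$ by $o(t^{1/3})$'' is precisely the statement to be proved, not a consequence of tail bounds. (Note also that the dangerous window is $O(t^{1/3+\eta})$, not $o(t^{2/3})$: the perturbation here moves the corner \emph{along} the boundary of the parallelogram, an ``unusual'' direction in which the mean decays linearly in the displacement rather than quadratically, as the paper points out in its Remark on the KPZ scaling.) The paper resolves this near-corner regime with a different mechanism: it introduces the stationary Hammersley process with sources of intensity $\lambda^\star_+=\sqrt{\rho/\sigma}(1+t^{-1/4})$ and sinks of intensity $1/\lambda^\star_+$, shows via Cator--Pimentel that the exit point $Z_{\lambda^\star_+}(\sigma t,\rho t)$ is nonzero except with probability $O(t^{-1/4})$, applies a crossing lemma to dominate $L(\mathcal{R}_{uv})-L(\sigma t,\rho t)$ by the corresponding increment of the stationary process, and then uses the fact that boundary increments of the stationary process are independent Poisson variables whose means are separated (this is where $\mu^{-1}<\rho/\sigma<\mu$ enters, strictly), so that elementary Poisson concentration gives a uniform $O(t^{-1/4})$ bound per grid point. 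Without this coupling step --- or some substitute giving joint control, such as a quantitative comparison of exit points --- your upper bound for item \emph{(i)} does not go through.
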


\section{Proof of the limiting shape}

As mentioned earlier, the proof of Theorem \ref{MaxiTheoreme} follows from a coupling argument between $L^{\alpha,\beta }$ and $L^{0,+\infty}$. Thanks to this coupling we can split the proof of  Theorem  \ref{MaxiTheoreme} in two part: a probabilistic part (Proposition \ref{Prop:OptiLosange}) and simple lemma of optimization (Lemma \ref{Lem:geometrique_rho}).
Proofs of Proposition \ref{Prop:OptiLosange} and Lemma \ref{Lem:geometrique_rho} rely on quite usual and simple arguments but we detail all the computations as they will be needed in the (more involved) proofs of Theorems \ref{th:Localization} and \ref{Th:FluctuLipschitz}.

\begin{prop}[Coupling with the classical Hammersley problem]\label{Prop:Coupling2}
For every $x,y\ge 0$ and $\beta>\alpha\ge 0$ we have the identity
$$
L^{\alpha,\beta }([0,x]\times [0, y]) \stackrel{\text{(d)}}{=} L\left(\phi([0,x]\times[0,y])\right),
$$
where $\phi(x,y)$ is defined by
\begin{equation}\label{eq:DefiPhi2}
\phi(x,y)=\frac{\beta^{1/4}\alpha^{1/4}}{\sqrt{\beta-\alpha}}
\left( \beta^{1/2}x-\beta^{-1/2}y, -\alpha^{1/2}x+\alpha^{-1/2}y \right).
\end{equation}
\end{prop}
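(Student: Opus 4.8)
The plan is to exploit the fact that $\phi$ is precisely the linear map that ``straightens'' the Lipschitz cone into the first quadrant, so that $\stackrel{\alpha,\beta}{\preccurlyeq}$ becomes the usual order $\stackrel{0,+\infty}{\preccurlyeq}$. Since $\phi$ is linear it acts on increments, and its role is transparent on the two extreme admissible directions generating the cone $\{\alpha u\le v\le\beta u\}$: a direct computation gives $\phi(1,\alpha)=\tfrac{\beta^{1/4}\alpha^{1/4}}{\sqrt{\beta-\alpha}}\,\beta^{-1/2}(\beta-\alpha)\,(1,0)$ and $\phi(1,\beta)=\tfrac{\beta^{1/4}\alpha^{1/4}}{\sqrt{\beta-\alpha}}\,\alpha^{-1/2}(\beta-\alpha)\,(0,1)$, i.e.\ the two boundary rays are sent to the positive coordinate semi-axes. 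This identifies $\phi$ as the right change of variables, and it remains to upgrade the picture to the two structural facts on which the coupling rests.

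The first fact is the order isomorphism. Writing $(u,v)=(x'-x,y'-y)$ and $C=\tfrac{\beta^{1/4}\alpha^{1/4}}{\sqrt{\beta-\alpha}}>0$, linearity gives $\phi(x',y')-\phi(x,y)=C\big(\beta^{1/2}u-\beta^{-1/2}v,\ -\alpha^{1/2}u+\alpha^{-1/2}v\big)$. Its first coordinate is $\ge 0$ iff $v\le\beta u$ and its second iff $v\ge\alpha u$; together these say $\alpha u\le v\le\beta u$, which forces $u\ge 0$ because $\alpha<\beta$. Hence $(x,y)\stackrel{\alpha,\beta}{\preccurlyeq}(x',y')$ if and only if $\phi(x,y)\stackrel{0,+\infty}{\preccurlyeq}\phi(x',y')$, the degenerate case $u=0$ having probability zero under $\Xi$. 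The second fact is measure preservation: reading off the matrix $C\left(\begin{smallmatrix}\sqrt\beta&-1/\sqrt\beta\\-\sqrt\alpha&1/\sqrt\alpha\end{smallmatrix}\right)$ one computes $|\det\phi|=C^2\tfrac{\beta-\alpha}{\sqrt{\alpha\beta}}=1$, so by the mapping theorem for Poisson processes $\phi(\Xi)$ is again a homogeneous Poisson process of unit intensity, now living on $\phi\big([0,+\infty)^2\big)$.

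It then remains to assemble these facts. As $\phi$ is a bijection, $\phi(\Xi\cap D)=\phi(\Xi)\cap\phi(D)$, and by the order isomorphism a finite sequence of points of $\Xi\cap D$ is a $\stackrel{\alpha,\beta}{\preccurlyeq}$-chain exactly when its image is a classical increasing chain in $\phi(\Xi)\cap\phi(D)$; injectivity of $\phi$ preserves lengths. Thus, realisation by realisation, $L^{\alpha,\beta}(\Xi\cap D)=L\big(\phi(\Xi)\cap\phi(D)\big)$, and replacing $\phi(\Xi)$ by a fresh unit Poisson process via measure preservation (translation invariance letting us disregard that the parallelogram $\phi(D)$ may stick out of the quarter-plane) yields $L^{\alpha,\beta}(D)\stackrel{(d)}{=}L(\phi(D))$. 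The only genuinely delicate points are the orientation bookkeeping in the order isomorphism---checking that the cone is mapped \emph{into} the first quadrant rather than across it---and a correct invocation of the Poisson mapping theorem; everything else is routine.
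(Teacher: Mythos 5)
Your proof is correct and follows essentially the same route as the paper's: verify that $\phi$ is an order isomorphism between $\stackrel{\alpha,\beta}{\preccurlyeq}$ and the classical order (by checking the images of the cone's boundary rays), note that $\det\phi=1$ so the Poisson process is preserved, and conclude. You merely make explicit the computations (the images of $(1,\alpha)$ and $(1,\beta)$, the sign analysis of the increments, the determinant) and the measure-zero and translation-invariance caveats that the paper leaves implicit.
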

In fact, on can check that we have more generally the  equality in distribution between the two processes $\left(L^{\alpha,\beta }([0,x]\times [0, y]), x>0,y>0) \right)$ and $\left(L(\phi([0,x]\times[0,y])),x>0,y>0 \right)$. However, since we will just use this equality for fixed $x$ and $y$, we will not provide a proof here.

\begin{figure}
\begin{center}
\includegraphics[width=8cm]{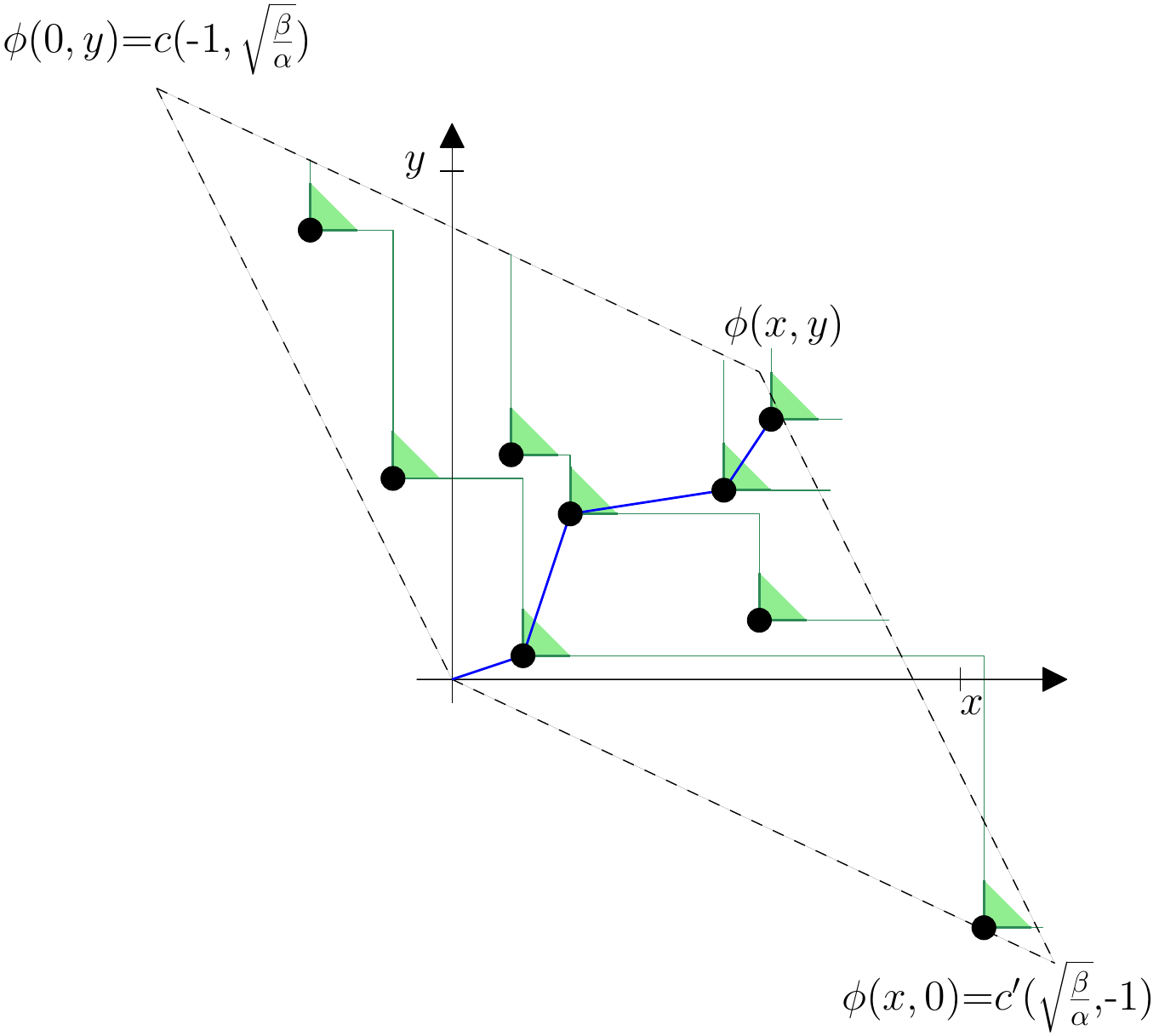}
\end{center}
\caption{The picture of Fig.\ref{fig:DefinitionModele} under map $\phi$. In particular, the image of the maximizing path for $\stackrel{\alpha,\beta}{\preccurlyeq}$ is mapped onto one maximal path in the parallelogram for the classical  order $\preccurlyeq$.}
\label{Fig:CouplageLosange}
\end{figure}

\begin{proof}[Proof of Proposition \ref{Prop:Coupling2}]
The linear function $\phi$ has determinant $1$ and is constructed so that
$$
(x,y)  \stackrel{\alpha,\beta}{\preccurlyeq} (x',y') \Leftrightarrow \phi(x,y)  \preccurlyeq \phi(x',y').
$$
Namely, straight lines $\{y=\alpha x\}$ (resp.  $\{y=\beta x\}$ ) are mapped onto horizontal (resp. vertical) lines.
Therefore the maximizing paths for the classical order $\preccurlyeq$ in $\phi([0,x]\times[0,y])$ are exactly the images by $\phi$ of the maximizing paths in $[0,x]\times[0,y]$ for the order $\stackrel{\alpha,\beta}{\preccurlyeq}$.

Besides, the fact that $\phi$ preserves areas implies that the image of $\Xi$ by $\phi$ is still a Poisson process with intensity one.
\end{proof}

We now solve the classical Hammersley problem in a parallelogram:
\begin{prop}\label{Prop:OptiLosange} Let $p,q,r,s>0$ and let $P_t,Q_t,R_t,S_t$ the points of the plane of respective coordinates $(-pt,qt)$, $(0,0)$,  $(rt,-st)$, $(rt-pt,qt-st)$. Let
$\mathcal{P}_t$ be the parallelogram  $P_t Q_t R_t S_t$. 
Then
\begin{equation}\label{eq:OptiLosange}
 \lim_{t\to \infty} \frac{L({\mathcal{P}_t})}{t} =
 \sup\{2\sqrt{{\sf Area}(\mathcal{R})}, \text{ such that the rectangle }\mathcal{R}:=[u,u']\times [v,v'] \subset \mathcal{P}_1\},
\end{equation}
where ${\sf Area}(\mathcal{R})$ denotes the area of $\mathcal{R}$.
\end{prop}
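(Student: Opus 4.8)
The plan is to prove the two matching bounds
\[
\liminf_{t\to\infty}\frac{L(\mathcal P_t)}{t}\ \ge\ M
\qquad\text{and}\qquad
\limsup_{t\to\infty}\frac{L(\mathcal P_t)}{t}\ \le\ M,
\qquad
M:=\sup_{\mathcal R\subseteq \mathcal P_1}2\sqrt{{\sf Area}(\mathcal R)},
\]
where $\mathcal R$ ranges over axis-parallel rectangles. The only probabilistic inputs will be Proposition \ref{Prop:AldousDiaconis}, applied to sub-rectangles, and a uniform (over all sub-rectangles) upper-tail estimate for the classical statistic $L$. The \emph{lower bound} is the easy half: if $\mathcal R=[u,u']\times[v,v']\subseteq \mathcal P_1$ then $t\mathcal R\subseteq \mathcal P_t$, so monotonicity of $L$ under inclusion gives $L(\mathcal P_t)\ge L(t\mathcal R)$, and since $\Xi$ is translation invariant, Proposition \ref{Prop:AldousDiaconis} yields $L(t\mathcal R)/t\to 2\sqrt{(u'-u)(v'-v)}=2\sqrt{{\sf Area}(\mathcal R)}$ almost surely. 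Running this over a countable family of rectangles with rational corners (which exhausts the supremum by continuity of the area) and then taking the supremum gives $\liminf_t L(\mathcal P_t)/t\ge M$ a.s.

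For the \emph{upper bound} I would fix $\eps>0$ and overlay the grid $\eps t\,\mathbb Z^2$. The combinatorial key is that any increasing chain $(u_1,v_1)\preccurlyeq\dots\preccurlyeq(u_L,v_L)$ in $\mathcal P_t$ visits the grid cells along a monotone (up–right) lattice path: grouping the chain points by the cell that contains them produces successive subchains whose horizontal extents $\delta x_c$ are \emph{disjoint} subintervals of $[u_1,u_L]$ and whose vertical extents $\delta y_c$ are disjoint subintervals of $[v_1,v_L]$ (because both coordinates are monotone along the chain and each cell is visited only once). Hence $\sum_c\delta x_c=u_L-u_1$ and $\sum_c\delta y_c=v_L-v_1$. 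Each subchain is an increasing chain in a box of size $\delta x_c\times\delta y_c\le \eps t\times\eps t$, so a uniform upper bound of the form $L(\rho)\le 2\sqrt{{\sf Area}(\rho)}+C(\eps t)^{1/3}$, valid simultaneously for all sub-rectangles $\rho$ with high probability, gives after summation and Cauchy–Schwarz
\[
L(\mathcal P_t)\ \le\ \sum_{c}2\sqrt{\delta x_c\,\delta y_c}\;+\;o(t)
\ \le\ 2\sqrt{\Big(\sum_{c}\delta x_c\Big)\Big(\sum_{c}\delta y_c\Big)}\;+\;o(t)
\ =\ 2\sqrt{(u_L-u_1)(v_L-v_1)}\;+\;o(t),
\]
where the accumulated error is $o(t)$ since there are only $O(1/\eps)$ visited cells, each contributing $O((\eps t)^{1/3})$.

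Since the chain endpoints satisfy $(u_1/t,v_1/t)\preccurlyeq(u_L/t,v_L/t)$ in $\mathcal P_1$, the right-hand side is at most $t\,C^\star$, with
$C^\star:=\sup\{2\sqrt{(x_B-x_A)(y_B-y_A)}:A\preccurlyeq B,\ A,B\in\mathcal P_1\}$ the \emph{chord} supremum; thus $\limsup_t L(\mathcal P_t)/t\le C^\star$. It then remains to prove the deterministic identity $C^\star=M$. The inequality $C^\star\ge M$ is immediate (the diagonal of any inscribed rectangle is an admissible chord). For $C^\star\le M$ I would show that for the optimizing pair $A\preccurlyeq B$ the full rectangle $[x_A,x_B]\times[y_A,y_B]$ is already contained in $\mathcal P_1$; this uses crucially that both pairs of sides of $\mathcal P_1$ have \emph{negative} slope (the $\preccurlyeq$-minimal boundary $P_1Q_1\cup Q_1R_1$ and the $\preccurlyeq$-maximal boundary $P_1S_1\cup S_1R_1$ are anti-diagonal), which forces the two remaining corners of the optimal chord-rectangle to stay inside the parallelogram. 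Combining, $M\le\liminf\le\limsup\le C^\star=M$, so the limit exists and equals $M$.

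The main obstacle is twofold. On the probabilistic side it is the \emph{uniform} upper-tail control: one must bound $L$ simultaneously over all sub-rectangles, including small ones, so that the per-cell errors accumulate only to $o(t)$ — this requires genuine exponential upper-tail bounds for the longest increasing path together with a union bound over grid-rectangles, not merely the in-law convergence of Propositions \ref{Prop:AldousDiaconis} and \ref{Prop:BDJ}. On the deterministic side the obstacle is the identity $C^\star=M$, which is \emph{false} for general convex bodies — for a thin parallelogram tilted along the main diagonal the chord supremum strictly exceeds the largest inscribed rectangle — so its proof must genuinely exploit the anti-diagonal orientation of the sides of $\mathcal P_t$ guaranteed by $p,q,r,s>0$.
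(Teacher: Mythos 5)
Your proposal is correct in substance and its two key ideas coincide with the paper's: the lower bound via inscribed rectangles and Proposition \ref{Prop:AldousDiaconis} is identical, and your deterministic identity $C^\star=M$ is exactly the geometric fact the paper exploits (both pairs of sides of $\mathcal{P}_1$ have negative slope, so the axis-parallel rectangle spanned by \emph{any} $\preccurlyeq$-comparable pair $A\preccurlyeq B$ in $\mathcal{P}_1$ lies entirely in $\mathcal{P}_1$ --- this follows since $\mathcal{P}_1$ is an intersection of half-planes with normals $(q,p)$ and $(s,r)$, both with positive entries, and the linear forms $qx+py$, $sx+ry$ evaluated at the two remaining corners are sandwiched between their values at $A$ and $B$). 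Where you diverge is in the organization of the upper bound: your $\eps t$-cell decomposition with the Cauchy--Schwarz recombination is a detour, because once $C^\star=M$ is known in the strong form above, the entire chain from $(u_1,v_1)$ to $(u_L,v_L)$ already lives in the single inscribed rectangle $[u_1,u_L]\times[v_1,v_L]\subset\mathcal{P}_t$, so one tail bound per rectangle suffices and no per-cell error accumulation is needed. This is what the paper does: it discretizes the boundary of $\mathcal{P}_t$ at mesh $t^{-2}$ (so $O(t^6)$ candidate rectangles $R_{i,j}$, and w.h.p.\ no Poisson point falls in the $O(1/t)$-area sliver $\mathcal{P}_t\setminus\mathcal{Q}_t$), and then applies a union bound with the tail estimate $\P(L(R_{i,j})\ge\gamma t)\le Ct^{-7}$ from \cite[Th.1.2]{BDK} (Proposition \ref{Prop:Flu_Ln_continu}). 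The ``main obstacle'' you flag --- a tail bound uniform over sub-rectangles --- is therefore not a genuine obstruction: a polynomial tail of sufficiently high order combined with a polynomially fine discretization (rather than literally all sub-rectangles) does the job, and that input is available off the shelf. If you keep your cell decomposition you would need the same discretization trick inside each cell anyway, since the extents $\delta x_c,\delta y_c$ are random; so I would recommend dropping the cells and arguing directly on inscribed rectangles as the paper does.
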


\begin{proof} 
For any $u,u',v,v'$ such that  $[u,u']\times [v,v'] \subset \mathcal{P}_1$, we have 
\begin{equation}\label{eq:MinoRectangleInclus}
L(\mathcal{P}_t)\ge L([ut,u't]\times [vt,v't]).
\end{equation}
By Proposition \ref{Prop:AldousDiaconis} we have that
\begin{equation*}
 \lim_{t\to \infty} \frac{ L([ut,u't]\times [vt,v't])}{t} = 2\sqrt{(u'-u)(v'-v)}=2\sqrt{{\sf Area}(\mathcal{R})}.
\end{equation*}
Thus we obtain the lower bound of \eqref{eq:OptiLosange}.

Let us now prove the upper bound. We discretize the boundary of $\mathcal{P}_t$ as follows. Let $(x_i,y_i)_{1\le i \le t^3}$ (resp. $(x_i,y'_i)$) be points of the plane such that
\begin{itemize}
\item For all $i$, $(x_i,y_i)$ lies on the segment $[P_t,Q_t]$ or on $[Q_t,R_t]$ (resp. $(x_i,y'_i)$ lies on the segment $[P_t,S_t]$ or on $[S_t,R_t]$)
\item $x_1=-pt$ and $x_{i+1}-x_i=(p+r)/t^2$. Thus $x_{t^3}=rt$.
\end{itemize}

\begin{center}
\includegraphics[width=9cm]{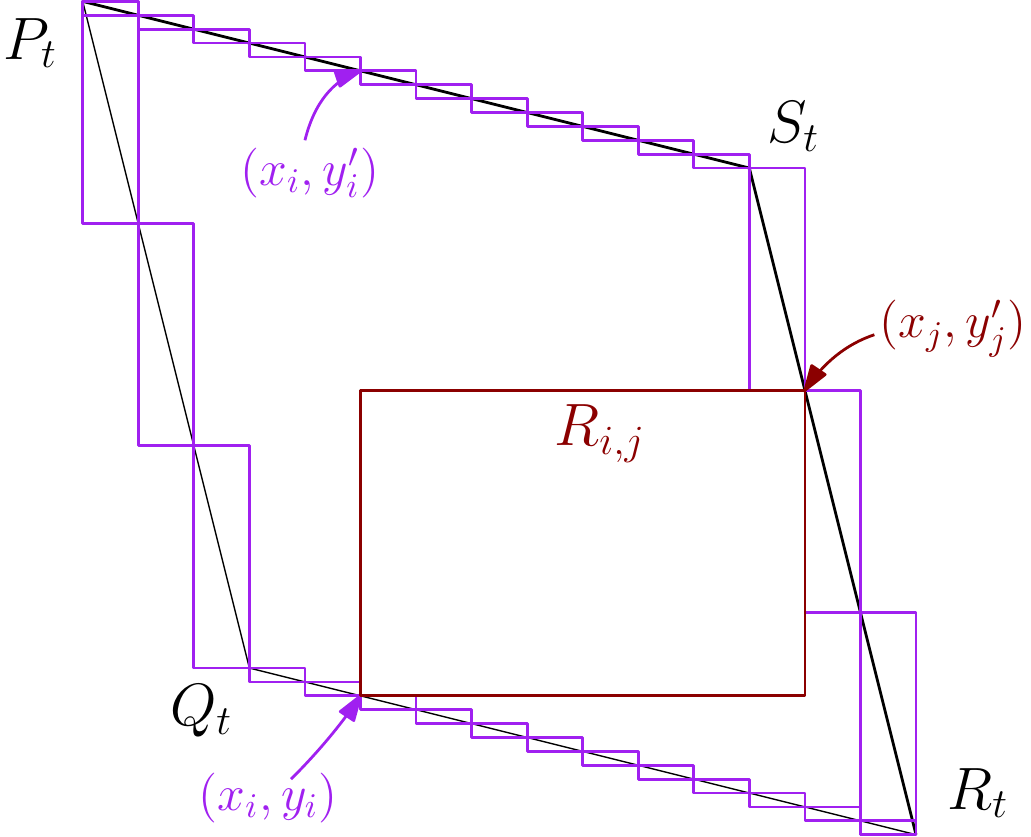}
\end{center}

Let $\mathcal{I}=\{(i,j),  i<j \mbox{ and } y_i\le y'_j\}$.
For $(i,j)\in \mathcal{I}$, denote $R_{i,j}$ the rectangle $[x_i,x_j]\times[y_i,y'_j]$ 
and $\mathcal{Q}_t=\bigcup_{(i,j)\in\mathcal{I}} R_{i,j}$.
Let us notice that $\mathcal{P}_t\setminus \mathcal{Q}_t$ is composed of $2t^3$ right triangles of width $(p+r)/t^2$ and of height bounded by $C/t^2$ for some constant $C$ depending of $p,q,r,s$. Thus the area of $\mathcal{P}_t\setminus \mathcal{Q}_t$ is of order $C/t$.
In particular 
$$\lim_{t\to \infty}\P(\mbox{ no point of $\Xi$ lies in } \mathcal{P}_t\setminus \mathcal{Q}_t)=1.$$
Thus 
$$\lim_{t\to \infty}\P(L(\mathcal{P}_t)= L(\mathcal{Q}_t))=1.$$
Due to the particular form of $\mathcal{Q}_t$, any increasing path included in $\mathcal{Q}_t$  is necessarily included in a rectangle $R_{i,j}$.
Thus
$$L(\mathcal{Q}_t)=\max\{L(R_{i,j}), (i,j)\in \mathcal{I}\}.$$
Hence for $\gamma >0$ 
\begin{eqnarray}
\P(L(\mathcal{Q}_t)\ge \gamma  t)&=& \P(\exists (i,j)\in \mathcal{I}, \; L(R_{i,j})\ge \gamma  t)\notag \\
&\le & \sum_{(i,j)\in \mathcal{I}} \P(L(R_{i,j})\ge \gamma  t) \notag \\
&\le & t^6 \max_{(i,j)\in \mathcal{I}} \P(L(R_{i,j})\ge \gamma  t) \label{eq:Majot6}
\end{eqnarray}
We now fix $\gamma >\sup\{2\sqrt{{\sf Area}(\mathcal{R})}, \text{ such that the rectangle }\mathcal{R}:=[u,u']\times [v,v'] \subset \mathcal{P}_1\}$.
In order to conclude we claim that there exists some constant $C:=C(\gamma)$, such that, for any $(i,j)\in \mathcal{I}$ and $t\geq 0$,
\begin{equation}\label{eq:Majot7}
\P(L(R_{i,j})\ge \gamma t)\le \frac{C}{t^7}.
\end{equation}
Indeed, this inequality is a direct consequence of \cite[Th.1.2]{BDK} (see also Proposition \ref{Prop:Flu_Ln_continu} below).
This concludes the proof of the upper bound.
\end{proof}

Hence, to obtain Theorem \ref{MaxiTheoreme}, it remains to determine the rectangle of maximal area in the parallelogram $\phi([0,a]\times [0,b])$.
The proof is easy but computations are easier to follow with a look at Figure  \ref{Fig:Table}. 

\begin{lem}\label{Lem:geometrique_rho} Let $\mu>1$, $c,c'>0$ and denote $P,Q,R,S$ the points of the plane of respective coordinates $(-c,c\mu)$, $(0,0)$,  $(c'\mu,-c')$, $(\sigma,\rho)$ with $\sigma:=-c+c'\mu$ and $\rho:=-c'+c\mu$ so that
the quadrilateral $\mathcal{P}:PQRS$ is a parallelogram. 
Then
\begin{equation}\label{Eq:geometrie}
 \max\{{\sf Area}(\mathcal{R}) \text{ such that  }\mathcal{R}:=[u,u']\times [v,v'] \subset \mathcal{P}\}=\begin{cases}
\displaystyle{\frac{(\rho\mu+\sigma)^2}{4\mu}}
 &\text{ if } \rho <\mu^{-1} \sigma\\
\displaystyle{\frac{(\sigma\mu+\rho)^2}{4\mu}}
&\text{ if }  \rho >\mu \sigma,\\
\displaystyle{\sigma\rho} &\text{ otherwise}.
\end{cases}
\end{equation}
More precisely,
\begin{itemize}
\item In the first case $\rho <\mu^{-1} \sigma$ a maximizing rectangle is given by $\mathcal{R}=[0,\mu\xi]\times [0,\xi]$ where $\xi=\frac{\rho\mu +\sigma}{2\mu}$. Every translation of $\mathcal{R}$ by a vector $u(\mu,-1)$ for some $u\in[0,(\sigma-\mu\rho)/2\mu]$ is also maximizing (a similar result holds in the second case).
\item In the third case $\mu^{-1}\sigma<\rho <\mu\sigma$ there is a unique rectangle $\mathcal{R}$ with maximal area and $\mathcal{R}=[0,\sigma]\times[0,\rho]$.
\end{itemize}

\end{lem}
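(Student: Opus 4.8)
The plan is to turn this inscribed-rectangle problem into an elementary two-variable optimization by exploiting the linear structure of the sides of $\mathcal{P}$. First I would describe $\mathcal{P}$ as an intersection of four half-planes. The two sides through $Q=(0,0)$ lie on the lines $\{x+\mu y=0\}$ and $\{\mu x+y=0\}$, and the opposite sides through $S$ on $\{x+\mu y=c(\mu^2-1)\}$ and $\{\mu x+y=c'(\mu^2-1)\}$; checking the four vertices gives
\[
\mathcal{P}=\{(x,y):\ 0\le x+\mu y\le c(\mu^2-1),\ \ 0\le \mu x+y\le c'(\mu^2-1)\}.
\]
Inverting the two relations defining $\sigma,\rho$ yields $c(\mu^2-1)=\sigma+\mu\rho$ and $c'(\mu^2-1)=\mu\sigma+\rho$, so the constraint constants are exactly the quantities in the statement.

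The key simplification is that the two linear forms $x+\mu y$ and $\mu x+y$ have strictly positive coefficients (since $\mu>1$), hence on the rectangle $[u,u']\times[v,v']$ each is minimized at the bottom-left corner $(u,v)$ and maximized at the top-right corner $(u',v')$. Therefore $\mathcal{R}\subset\mathcal{P}$ reduces to the four inequalities
\[
u+\mu v\ge 0,\qquad \mu u+v\ge 0,\qquad u'+\mu v'\le \sigma+\mu\rho,\qquad \mu u'+v'\le \mu\sigma+\rho.
\]
Writing $w=u'-u\ge 0$ and $h=v'-v\ge 0$ for the side lengths and noting that $(u,v)\mapsto(u+\mu v,\mu u+v)$ is a linear bijection (determinant $1-\mu^2\neq 0$), I would argue that a rectangle of width $w$ and height $h$ fits somewhere in $\mathcal{P}$ if and only if $w+\mu h\le \sigma+\mu\rho$ and $\mu w+h\le \mu\sigma+\rho$: the placement $[0,w]\times[0,h]$ witnesses sufficiency, while the monotone corners give necessity. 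Thus the problem becomes: maximize ${\sf Area}=wh$ over $\{w,h\ge 0\}$ subject to these two constraints.

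The final step is to solve this planar optimization explicitly. The feasible region is the quadrilateral bounded by the axes and the lines $L_1:\{w+\mu h=\sigma+\mu\rho\}$ and $L_2:\{\mu w+h=\mu\sigma+\rho\}$, which meet precisely at $(\sigma,\rho)$. Since $wh$ vanishes on the axes and increases in each variable, its maximum lies on $L_1$ or $L_2$. On $L_1$ the product is a concave parabola in $w$ with vertex at $w=(\sigma+\mu\rho)/2$, while the feasible portion of $L_1$ is $w\in[0,\sigma]$; the vertex is interior iff $(\sigma+\mu\rho)/2\le\sigma$, i.e. $\rho<\mu^{-1}\sigma$, giving area $(\sigma+\mu\rho)^2/4\mu$. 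Symmetrically the vertex on $L_2$ is interior iff $\rho>\mu\sigma$, giving $(\mu\sigma+\rho)^2/4\mu$; and when $\mu^{-1}\sigma<\rho<\mu\sigma$ both parabolas are still increasing at their shared endpoint, so the maximum is the corner $(\sigma,\rho)$ with area $\sigma\rho$. This reproduces the three-way formula. I would then recover the explicit maximizers and the translation freedom by undoing the change of variables: in the first case the upper constraint $L_1$ is saturated while the lower corner is free to slide, which unwinds to the stated translates by $u(\mu,-1)$, $u\in[0,(\sigma-\mu\rho)/2\mu]$, whereas in the central case the forced corner $(\sigma,\rho)$ makes the optimizer unique.

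The only genuinely delicate point is this last case analysis: one must correctly identify which of $L_1,L_2$ is the binding constraint along each part of the boundary and decide whether the parabola's vertex falls inside or outside the feasible segment, which is what splits the answer into three regimes. The preceding reduction — monotonicity of the two linear forms, together with the bijection that decouples the \emph{size} $(w,h)$ of the rectangle from its \emph{placement} — is precisely what makes this case analysis transparent, rather than a cumbersome direct optimization over the four corner coordinates.
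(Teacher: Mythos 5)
Your argument is correct, and it takes a genuinely different route from the paper's. The paper first reduces to rectangles with one corner at $Q=(0,0)$ (by translating along $\overrightarrow{AQ}$), then parametrizes the opposite corner $C$ on the sides $[PS]$ or $[SR]$ and maximizes the resulting quadratic in the parameter $x$, with a separate discussion of whether the unconstrained maximizer falls inside the admissible interval and an additional paragraph for the configuration where $S$ leaves the upper-right quadrant. Your half-plane representation $\mathcal{P}=\{0\le x+\mu y\le \sigma+\mu\rho,\ 0\le \mu x+y\le \mu\sigma+\rho\}$, combined with the observation that each linear form is extremized at opposite corners of an axis-parallel rectangle, cleanly decouples the \emph{size} $(w,h)$ from the \emph{placement} and turns the problem into maximizing $wh$ under two linear constraints; this buys a uniform treatment of the three regimes, an immediate proof of uniqueness in the central case (the two lower constraints are forced to be equalities, pinning $(u,v)=(0,0)$), and a transparent description of the one-parameter family of maximizers in the non-central cases (one constraint saturated, the other leaving the slack $0\le \mu u+v\le (\mu^2-1)(\sigma-\mu\rho)/2\mu$, which is exactly the stated translation by $u(\mu,-1)$, $u\in[0,(\sigma-\mu\rho)/2\mu]$). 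The computations all check out: $c(\mu^2-1)=\sigma+\mu\rho$, $c'(\mu^2-1)=\mu\sigma+\rho$, and the vertex conditions $(\sigma+\mu\rho)/2\le\sigma \Leftrightarrow \rho\le\mu^{-1}\sigma$ and $(\mu\sigma+\rho)/2\le\rho \Leftrightarrow \rho\ge\mu\sigma$ reproduce the three-way split.

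One small point to tighten: your description of the feasible region as a quadrilateral with $L_1$ and $L_2$ meeting at $(\sigma,\rho)$, and of the feasible portion of $L_1$ as $w\in[0,\sigma]$, presupposes $\sigma,\rho>0$. The lemma (and the paper, which treats this case separately) also allows $\rho<0$ or $\sigma<0$ (though not both, since $\sigma+\mu\rho=c(\mu^2-1)>0$ and $\mu\sigma+\rho=c'(\mu^2-1)>0$). Your framework absorbs this with one extra sentence: if, say, $\rho\le 0$, the crossing point lies outside the first quadrant, only $L_1$ is active there, its feasible portion is $w\in[0,\sigma+\mu\rho]$, and the parabola's vertex $w=(\sigma+\mu\rho)/2$ is automatically interior, giving $(\sigma+\mu\rho)^2/4\mu$ in agreement with the first case (as then $\rho\le 0<\mu^{-1}\sigma$). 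Adding that remark makes the proof complete.
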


\begin{figure}[h!]
\begin{center}
\begin{tabular}{|>{\centering}m{28mm} |>{\centering}m{50mm} |>{\centering}m{25mm} | m{50mm} |}
\hline
\multicolumn{2}{|c|}{After transformation $\phi$} & \multicolumn{2}{c|}{Before transformation $\phi$}\\
\hline
 & Maximizing rectangle(s) (drawn in red) & & Shape of maximizing path(s)\\
\hline
$\rho/\sigma <\mu^{-1}$  & \includegraphics[width=45mm]{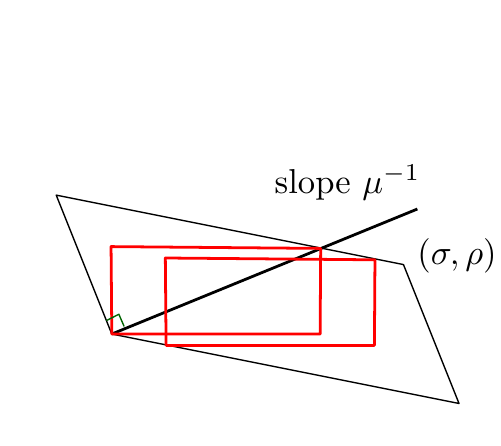} & $\frac{b}{a} <  \frac{2}{1/\beta+1/\alpha}$ &  \includegraphics[width=45mm]{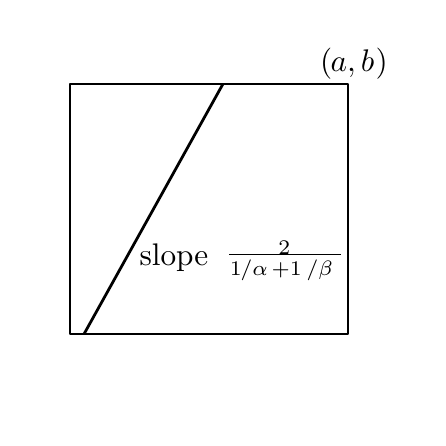} \\
\hline
$\rho/\sigma >\mu$ & \includegraphics[width=45mm]{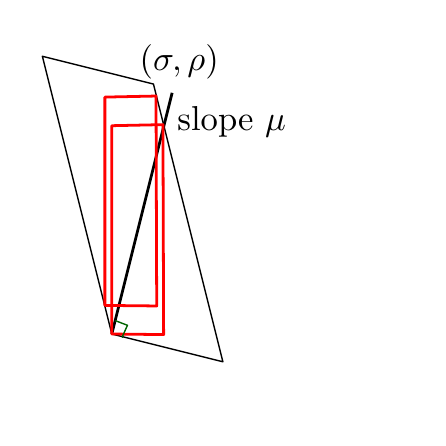} & $\frac{b}{a} >  \frac{\alpha+\beta}{2}$ &  \includegraphics[width=45mm]{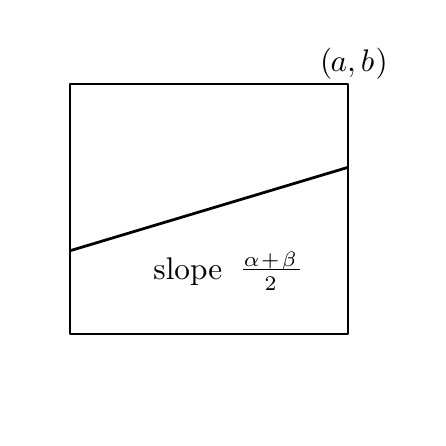} \label{Tableau} \\
\hline
$\mu^{-1}\leq \rho/\sigma \leq \mu$ & \includegraphics[width=45mm]{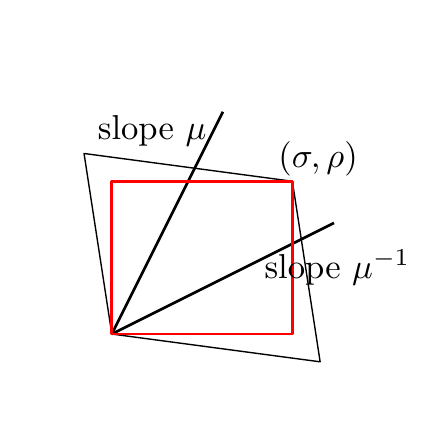} & central case &  \includegraphics[width=45mm]{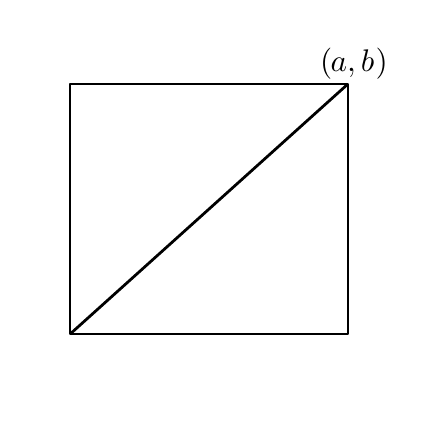} \\
\hline
\end{tabular}
\end{center}
\caption{Main notation and qualitative results for the three cases  of Theorem \ref{MaxiTheoreme} and Lemma \ref{Lem:geometrique_rho}}
\label{Fig:Table}
\end{figure}

\begin{proof}[Proof of Lemma \ref{Lem:geometrique_rho}] 
Set 
$$M:= \max\{{\sf Area}(\mathcal{R}) \text{ such that the rectangle }\mathcal{R}:=[u,u']\times [v,v'] \subset \mathcal{P}\}.$$
 We are going to identify a rectangle $\mathcal{R}:=[u,u']\times [v,v']:=ABCD$ inside $\mathcal{P}$ with the largest area. Clearly, such rectangle must have two opposite vertices on sides of $\mathcal{P}$, for instance $A$ and $C$, with $A$ on $[PQ]$ or $[QR]$ and $C$ on $[PS]$ or $[SR]$. With this convention, note first that if $ABCD\subset \mathcal{P}$, then the image by a translation of  vector $\overrightarrow{AQ}$ of this rectangle is  a rectangle inside  $\mathcal{P}$ with a vertex at $(0,0)$. Thus, we get 
 $$ M=
\max\{{\sf Area}(\mathcal{R}) \text{ such that the rectangle }\mathcal{R}:=[0,u]\times [0,v] \subset \mathcal{P}\}.$$
 
Assume first that  $\sigma,\rho >0$, so that the vertex $S$ is in the upper right quarter plane. Then, there are two  possible cases (drawn in red and blue in the picture below):
\begin{itemize}
\item Either $C\in [PS]$ (drawn in blue)  
\item  Or  $C\in [SR]$ (drawn in red)
\end{itemize}
\begin{center}
\includegraphics[width=12cm]{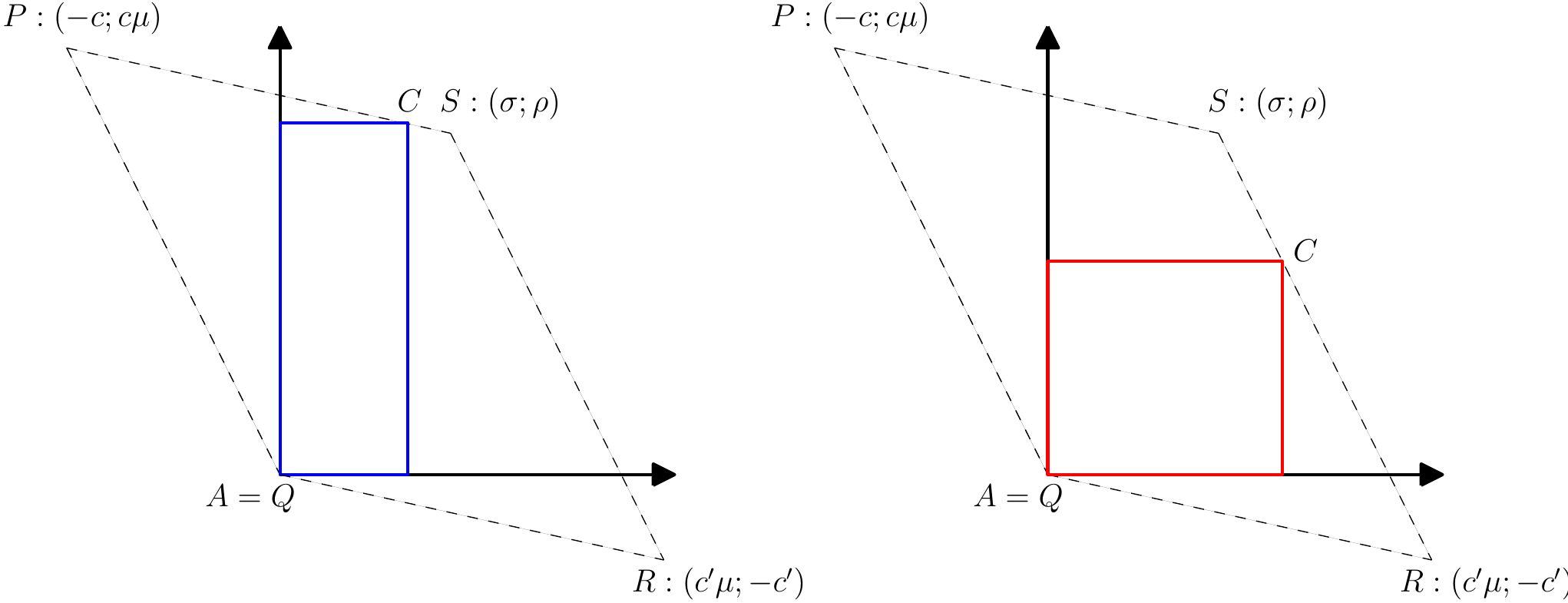}\\
\end{center}
We now distinguish the two cases. If $C\in [PS]$, then $C:=(-c+xc'\mu,c\mu-xc')$ for some $x\in[c/(c'\mu),1]$ since $C$ must be in $(\mathbb{R_+})^2$.
Thus
$${\sf Area}(\mathcal R)=(-c+xc'\mu)\times (c\mu-xc')\quad
\mbox{ which is maximal for }\quad x=\min\left(\frac{c}{2c'}(\mu+\frac{1}{\mu}),1\right).$$
If $C\in [SR]$, then $C:=(c'\mu-xc,-c'+xc\mu)$ for some $x\in[c'/(c\mu)),1]$ since $C$ must be in $(\mathbb{R_+})^2$.
Thus
$${\sf Area}(\mathcal R)=(c'\mu-xc)\times(-c'+xc\mu) \quad
\mbox{ which is maximal for }\quad x=\min\left(\frac{c'}{2c}(\mu+\frac{1}{\mu}),1\right).$$
Note that, assuming $\rho,\sigma>0$,
$$\frac{\rho}{\sigma}>\mu  \Longleftrightarrow c\mu-c'>\mu(-c+c'\mu) \Longleftrightarrow  1-\frac{c'}{c\mu}> -1+\frac{c'\mu}{c} \Longleftrightarrow  \frac{c'}{2c}(\mu+\frac{1}{\mu})<1$$
Symmetrically, we have  
$$\frac{\rho}{\sigma}<\mu^{-1}  \Longleftrightarrow  \frac{c}{2c'}(\mu+\frac{1}{\mu})<1$$

Hence, if $\mu^{-1}<\rho/\sigma<\mu$, the maximum are in both cases obtained for $x=1$, \emph{i.e.} $C=S$. On the other hand, if this condition does not hold, the maximum is obtained for some $C\neq S$, thus the image of $ABCD$ by a translation of vector  $\overrightarrow{CS}$ yields another rectangle inside $\mathcal{P}$ with the same area. Then, simple calculus yields the claimed expression for $M$.

Assume now that the vertex $S$ is not in the upper right quarter plane, for example we have $\sigma>0$ but $\rho<0$ as in the picture below.
\begin{center}
\includegraphics[width=7cm]{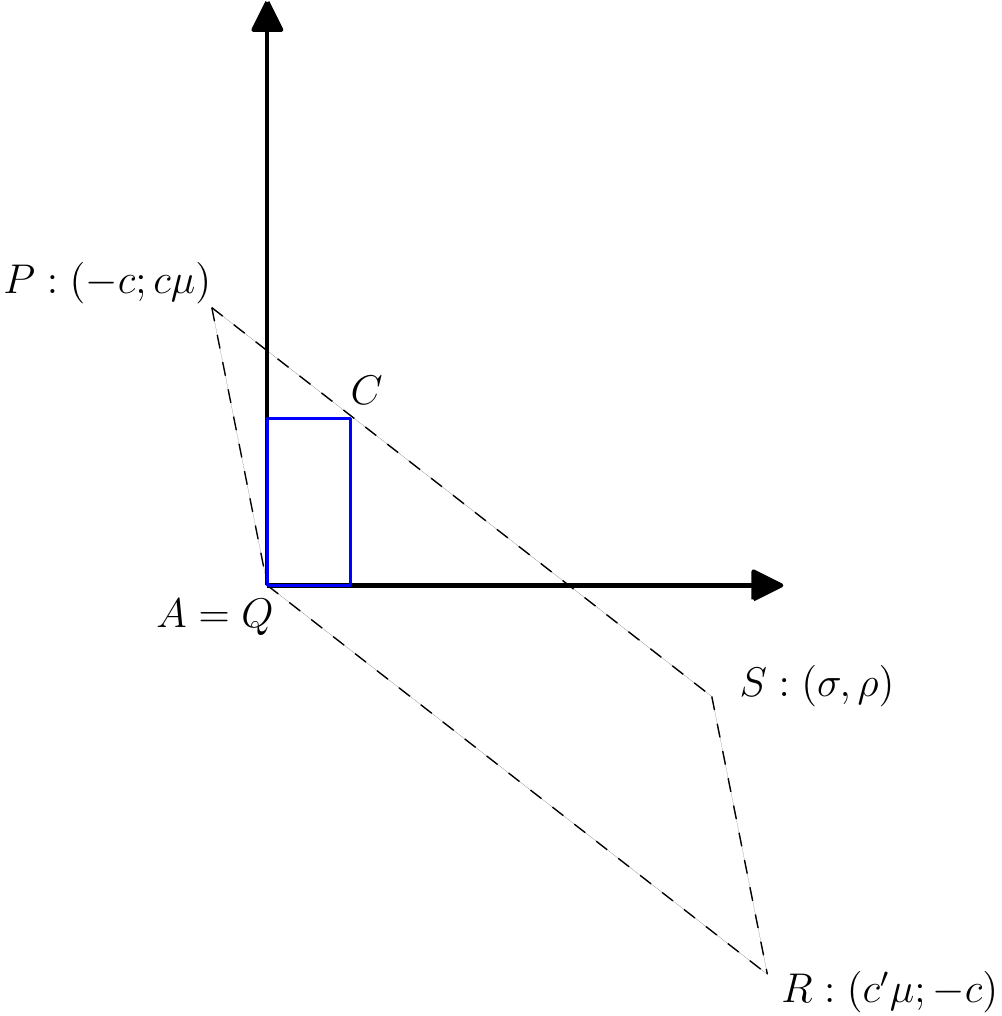}
\end{center}
Then, if $A=Q=(0,0)$, necessarily $C\in [PS]$, \emph{i.e.} $C:=(-c+xc'\mu,c\mu-xc')$ for some $x\in[0,1]$. In this case, ${\sf Area}(\mathcal R)=(-c+xc'\mu)\times (c\mu-xc')$ is always maximal for $x=\frac{c}{2c'}(\mu+\frac{1}{\mu})<1$ and we still get $M=\frac{c^2(\mu^2-1)^2}{4\mu}=\frac{(\rho\mu+\sigma)^2}{4\mu}$.

Similar arguments  give the case $\sigma<0$ but $\rho>0$. Note that we can not have simultaneously $\sigma<0$ and  $\rho< 0$ since  $\mu>1$.
\end{proof}

We now can proceed to the proof of the limiting shape. We set
\begin{equation}\label{eq:Def_cc'mu}
c=\frac{\alpha^{1/4}\beta^{-1/4}b}{\sqrt{\beta-\alpha}},\quad c'=\frac{\beta^{1/4} \alpha^{3/4}a}{\sqrt{\beta-\alpha}},\quad \mu=\sqrt{\frac{\beta}{\alpha}}>1
\end{equation}
and 
\begin{equation}\label{eq:Def_rhosigma}
\sigma=-c+c'\mu,\qquad
\rho=-c'+c\mu.
\end{equation}
We easily check that
\begin{equation}
\rho/\sigma <\mu^{-1} \Longleftrightarrow \frac{b}{a}< \frac{2}{\tfrac{1}{\alpha}+\tfrac{1}{\beta}},\qquad 
\rho/\sigma >\mu  \Longleftrightarrow \frac{b}{a}> \frac{\alpha +\beta}{2}.
\label{eq:Encadrement_ab_versus_sigmarho}
\end{equation}

\begin{proof}[Proof of Theorem \ref{MaxiTheoreme}]

Combining Proposition \ref{Prop:OptiLosange} and Proposition \ref{Prop:Coupling2} we obtain that
$$
f^{\alpha,\beta}(a,b)=\sup\{2\sqrt{{\sf Area}(\mathcal{R}}), \text{ such that the rectangle }\mathcal{R}=[u,u']\times [v,v'] \subset \phi([0,a]\times[0,b])\}
$$
where $\phi$ is defined by \eqref{eq:DefiPhi2}. Hence,  $\phi([0,a]\times[0,b])$  is a parallelogram with vertices 
$$
P:c(-1,\mu),\quad Q:(0,0),\quad R:c'(\mu,-1)),\quad S:(\sigma,\rho),
$$ 
where $c,c',\mu,\sigma,\rho$ are defined by \eqref{eq:Def_cc'mu},\eqref{eq:Def_rhosigma}.
We first collect the results of Lemma \ref{Lem:geometrique_rho} in the case $\rho,\sigma >0$.
If we have $\frac{\rho}{\sigma} <\mu^{-1}$ then Lemma \ref{Lem:geometrique_rho} yields
$$
f^{\alpha,\beta}(a,b) = 2\sqrt{\frac{(\rho\mu+\sigma)^2}{4\mu}} = b\sqrt{\tfrac{1}{\alpha}-\tfrac{1}{\beta}}.
$$
If $\frac{\rho}{\sigma} >\mu $ then again by Lemma \ref{Lem:geometrique_rho}
$$
f^{\alpha,\beta}(a,b) = 2\sqrt{\frac{(\sigma\mu+\rho)^2}{4\mu}} = a\sqrt{\beta-\alpha}.
$$
Finally, in the central case (which corresponds to $\mu^{-1} \leq \rho/\sigma \leq\mu$) we obtain that
$$
f^{\alpha,\beta}(a,b) = 2\sqrt{\sigma\rho} = 2\sqrt{\frac{(-b+\beta a)(-\alpha a+b)}{\beta-\alpha}}.
$$
For the case where $\sigma$ or $\rho$ is negative we observe that
$$
\sigma <0,\rho >0\Leftrightarrow \frac{b}{a} >\beta \left(\geq \frac{\alpha+\beta}{2}\right),\qquad \sigma >0,\rho <0\Leftrightarrow \frac{b}{a} <\alpha \left(\leq \frac{2}{1/\alpha+1/\beta}\right)
$$ 
so the formula claimed in Theorem \ref{MaxiTheoreme} is also valid.
\end{proof}

\subsection{Localization of paths}

This Section is devoted to the proof of Theorem \ref{th:Localization}.
Thanks to the coupling of Proposition \ref{Prop:Coupling2}, it suffices to prove that in the classical Hammersley problem in a parallelogram $t\times \mathcal{P}$, any optimal path concentrates along the diagonal of one of the largest rectangles included in $t\times \mathcal{P}$ (asymptotically with high probability). 

The proof relies on a simple convexity argument for the limiting shape. In the central case the proof is almost identical to that of (\cite{Sepp}, Theorem 2 eq.(1.7)) so we only write the details in the non-central case. (This case is more involved as we know from Lemma \ref{Lem:geometrique_rho} that there are several maximizing rectangles).

\begin{proof}[Proof of Theorem \ref{th:Localization}.]
We fix $\delta$ throughout the proof. As in Lemma \ref{Lem:geometrique_rho},
let $\mu>1$, $c,c'>0$ and denote $P,Q,R,S$ the points of the plane of respective coordinates $(-c,c\mu)$, $(0,0)$,  $(c'\mu,-c')$, $(\sigma,\rho)$ with $\sigma:=-c+c'\mu$ and $\rho:=-c'+c\mu$ so that
the quadrilateral $\mathcal{P}_1:PQRS$ is a parallelogram. Let $\mathcal{P}_t$ be the homothetic transformation of $\mathcal{P}_1$ by a factor $t$, \emph{i.e.}  $\mathcal{P}_{t}$ is the parallelogram with vertices $(-ct,c\mu t)$, $(0,0)$,  $(c'\mu t,-c' t)$, $(\sigma t,\rho t)$.  By symmetry, we can assume without loss of generality that $\rho<\mu^{-1}\sigma$.
Recall from Lemma \ref{Lem:geometrique_rho} that in that case there are several maximizing rectangles in $\mathcal{P}_1$. For all such maximizing rectangles the diagonal has slope $1/\mu$.

Denote $\mathcal{M}_t$ the set of maximizing paths for the classical Hammersley problem in $\mathcal{P}_t$. 
For $\eps>0$ which will be chosen later, we decompose $\mathcal{P}_1$ into disjoint squares of size $\eps\times \eps$:
$$
\mathcal{P}_1\subset\bigcup_{(i,j)\in \mathcal{I}} \bigg\{[\eps i,\eps(i+1))\times [\eps j,\eps(j+1))\bigg\},
$$
for some finite set $\mathcal{I}$. For a real $c$, let $\bar{D}_{c,\delta}$ be the diagonal strip $\bar{D}_{c,\delta} =\set{|\mu y-x-c|\leq \delta/2}$. Thus, with the notations of Theorem \ref{th:Localization}, $\bar{D}_{c,\delta}=(0,c/\mu)+D_{\delta/(2\mu),1/\mu}$.  Let us remark that $\mathcal{P}_1\setminus \bar{D}_{c,\delta}$ is composed of (at most) two connected components, 
\begin{eqnarray*}
\mathcal{Q}^1_{c,\delta}&:=&\{(x,y)\in \mathcal{P}_1, \mu y -x-c>\delta/2\},\\
\mathcal{Q}^2_{c,\delta}&:=&\{(x,y)\in \mathcal{P}_1, \mu y -x-c<-\delta/2\}.
\end{eqnarray*}
 Let $\mathcal{I}^\ell_{c,\delta}$ for $\ell\in\{1,2\}$ be the smallest subset of $\mathcal{I}$ such that
$$
\mathcal{Q}^\ell_{c,\delta}\subset\bigcup_{(i,j)\in \mathcal{I}^\ell_{c,\delta}} \bigg\{[\eps i,\eps(i+1))\times [\eps j,\eps(j+1))\bigg\}
$$
(notations are summarized in Fig. \ref{Fig:ProofLocalization}).
Note that by homogeneity 
$$
\mathcal{P}_t\setminus (t\times \bar{D}_{c,\delta})\subset\left(t\times \mathcal{Q}^1_{c,\delta}\right)\cup \left(t\times \mathcal{Q}^2_{c,\delta}\right).
$$

\begin{figure}
\begin{center}
\includegraphics[width=12cm]{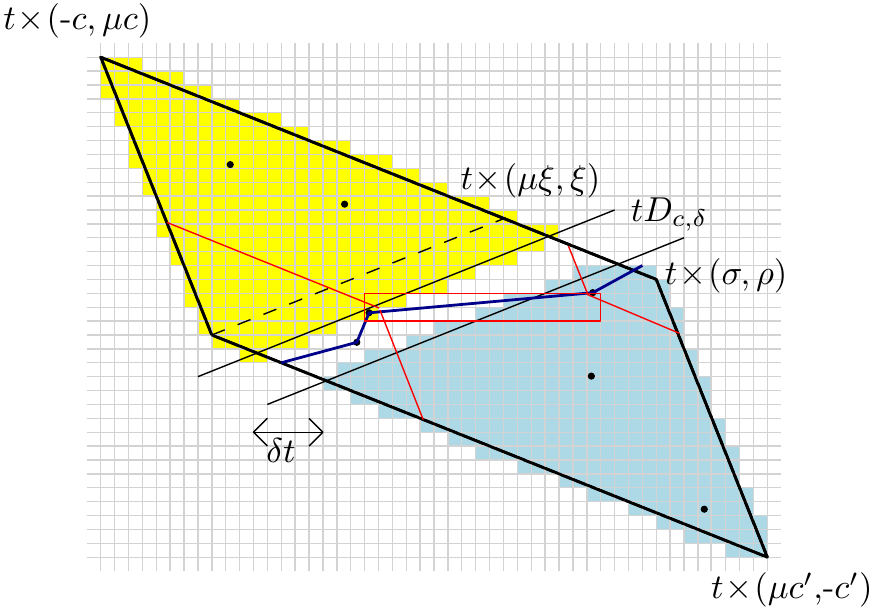}
\caption{Discretization of $\mathcal{P}_t$ into squares of size $t\eps$. In yellow, the small squares indexed by the set $\mathcal{I}^1_{c,\delta}$ and in blue  the one indexed by the set $\mathcal{I}^2_{c,\delta}$. The three parallelograms $\mathcal{G}_{i,j}$, $\tilde{\mathcal{G}}_{k,l}$ and $\mathcal{R}_{i,j,k,l}$ are drawn in red. }
\label{Fig:ProofLocalization}
\end{center}
\end{figure}

If there exists some path $\pi\in \mathcal{M}_t$   not contained in any $t\bar{D}_{c,\delta}$ for any $c\in \R$, this implies in particular that there exists some $c$ such that the path $\pi$ intersects $\mathcal{Q}^1_{c,\delta}$ and $\mathcal{Q}^2_{c,\delta}$. Besides, because of discretization, there exists an integer $n_\eps$ and a finite number of real numbers $c_1,\ldots,c_{n_\eps}$ such that, for any $c\in \R$, $\mathcal{I}^\ell_{c,\delta}=\mathcal{I}^\ell_{c_a,\delta}$, for some $a\in \{1,\ldots,n_\eps\}$.
Thus we have
\begin{multline}\label{eq:Event2}
\bigg\{\exists \pi\in \mathcal{M}_t\text{ not contained in }t\bar{D}_{c,\delta}\text{ for some }c\bigg\}\\
\subset \bigcup_{a=1}^{n_\eps} \bigcup_{(i,j)\in \mathcal{I}^1_{c_a,\delta}}  \bigcup_{(k,l)\in \mathcal{I}^2_{c_a,\delta}} \bigg\{\exists \pi \mbox{ intersecting }\  [t\eps i,t\eps(i+1))\times [t\eps j,t\eps(j+1)) \mbox{ and } [t\eps k,t\eps(k+1))\times [t\eps l,t\eps(l+1)) \bigg\}.
\end{multline}
Let 
 $a\in \{1,\ldots,n\}$, $(i,j)\in \mathcal{I}^1_{c_a,\delta}$ and $(k,l)\in \mathcal{I}^2_{c_a,\delta}$. We need to prove that  the probability that there exists a maximizing path  which intersects  the squares $[t\eps i,t\eps(i+1))\times [t\eps j,t\eps(j+1))$ and $[t\eps k,t\eps(k+1))\times [t\eps l,t\eps(l+1))$ tends to 0 as $t$ tends to infinity. By symmetry, we can assume that $i\le k$ and $j\le l$ as in  Figure \ref{Fig:ProofLocalization}.

Let $\mathcal{G}_{i,j}$ be the parallelogram with opposite vertices $(0,0)$ and $\varepsilon (i+1,j+1)$ and with sides parallel to the sides of $\mathcal{P}$. Let $\tilde{\mathcal{G}}_{k,l}$ be the parallelogram with opposite vertices  $\varepsilon (k,l)$  and $(\sigma, \rho)$ and with sides parallel to the sides of $\mathcal{P}$. Denote $\mathcal{R}_{i,j,k,l}$ the rectangle  with sides parallel to the axes and with  opposite vertices  $\varepsilon (i,j)$  and $\varepsilon (k+1,l+1)$.  Thus we get 
\begin{multline*}
 \bigg\{\exists \pi \mbox{ intersecting }  [t\eps i,t\eps(i+1))\times [t\eps j,t\eps(j+1)) \mbox{ and } [t\eps k,t\eps(k+1))\times [t\eps l,t\eps(l+1)) \bigg\}\\
\subset \bigg\{L(t\mathcal{G}_{i,j})+L(t\tilde{\mathcal{G}}_{k,l})+ L(t\mathcal{R}_{i,j,k,l})\ge L(\mathcal{P}_t) \bigg\}.
\end{multline*}
We can compute the almost sure limit of  $\left(L(t\mathcal{G}_{i,j})+L(t\tilde{\mathcal{G}}_{k,l})+L(t\mathcal{R}_{i,j,k,l})\right)/t$ using  Proposition \ref{Prop:OptiLosange} and Lemma \ref{Lem:geometrique_rho}. 
We let the reader check that,  if $\eps$ is chosen small enough, for any $(i,j)\in \mathcal{I}^1_{c,\delta}$ and $(k,l)\in \mathcal{I}^2_{c,\delta}$,  this limit is strictly smaller than the limit of $L(\mathcal{P}_t)/t$. This computation is very similar to the proof of Lemma \ref{Lem:geometrique_rho}.
It is a consequence of the fact that the function
$$
\begin{array}{r c l}
(\mathcal{P}_t)^2 & \to & \mathbb{R}^2\\
(a,b), (\alpha,\beta) &\mapsto & \tilde{f}(a,b)+ \tilde{f}(\alpha-a,\beta-b) + \tilde{f}(\sigma-\alpha,\rho-\beta)
\end{array}
$$
(where $\tilde{f}(a,b)=2\sqrt{ab}\mathbf{1}_{a,b>0}$) is strictly maximized when $(a,b)$ and $(\alpha,\beta)$ belong to a straight line with slope $\mu^{-1}$.
Thus, we deduce that
$$\lim_{t \to \infty} \P\bigg\{L(t\mathcal{G}_{i,j})+L(t\tilde{\mathcal{G}}_{k,l})+ L(t\mathcal{R}_{i,j,k,l})\ge L(\mathcal{P}_t) \bigg\}=0.$$
Therefore the event on the LHS of \eqref{eq:Event2} is included in a finite union of events (and the number of those events does not depend on $t$) whose probabilities tend to zero.

This proves that in the classical Hammersley problem in a parallelogram optimal paths concentrate along one of the maximizing diagonals. The proof of Theorem \ref{th:Localization} follows from the coupling of Proposition \ref{Prop:Coupling2}.
\end{proof}

\section{Fluctuations}
\subsection{The standard case : Proof of Theorem \ref{Th:FluctuLipschitz} (i) }

We first recall some known facts regarding the fluctuations for the classical Hammersley problem.


\begin{prop}\label{Prop:Flu_Ln_continu}
For every $m>0$, there exists $C_m>0$ such that for every rectangle $\mathcal{R}$  with sides parallel to the axes and every $\eta >0$
\begin{equation}\label{eq:MajoLtbis}
\P\left( L(\mathcal{R})\ge 2\sqrt{{\sf Area}(\mathcal{R})}+  {\sf Area}(\mathcal{R})^{1/6+\eta }\right)\leq \frac{C_m}{{\sf Area}(\mathcal{R})^{\eta m}}.
\end{equation}
\end{prop}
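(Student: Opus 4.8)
The plan is to reduce the tail bound \eqref{eq:MajoLtbis} to the known upper-tail estimate for the classical Hammersley problem, namely \cite[Th.1.2]{BDK}, exploiting the scaling invariance of the Poisson process. First I would record the scaling property: if $\mathcal{R}=[u,u']\times[v,v']$ has area $A={\sf Area}(\mathcal{R})=(u'-u)(v'-v)$, then by the invariance of the intensity-one Poisson process under area-preserving affine maps (the same observation used in the proof of Proposition \ref{Prop:Coupling2}), the law of $L(\mathcal{R})$ depends only on $A$. Indeed, stretching the first coordinate by a factor $\lambda$ and the second by $1/\lambda$ preserves both the Poisson law and the partial order $\preccurlyeq$, so we may assume $\mathcal{R}$ is a square of side $\sqrt{A}$; equivalently $L(\mathcal{R})\stackrel{(d)}{=}L(\sqrt{A},\sqrt{A})$. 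This collapses the two-parameter family of rectangles to the single parameter $A$.

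Next I would invoke the quantitative upper-tail bound of Baik–Deift–Johansson. Their result (or the moderate-deviation form derived from the Tracy–Widom convergence together with tail control of $\mathrm{TW}$) gives, for the square of side $s=\sqrt{A}$, an estimate of the shape
$$
\P\left(L(s,s)\ge 2s + s^{1/3}\, x\right)\le C\exp\!\left(-c\,x^{3/2}\right)
$$
uniformly in $s$ large and $x$ in a suitable range. Writing $2s=2\sqrt{A}$ and matching the deviation $A^{1/6+\eta}=s^{1/3}\cdot s^{\eta\cdot 2}=s^{1/3}\,A^{\eta}$, so that $x=A^{\eta}=s^{2\eta}$, the exponential becomes $\exp(-c\,s^{3\eta})$. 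Since $s^{3\eta}=A^{3\eta/2}$ grows faster than any power of $A$, the stretched-exponential tail is bounded by $C_m/A^{\eta m}$ for every fixed $m>0$, which is exactly \eqref{eq:MajoLtbis}. One must also handle small $A$ (where $2\sqrt{A}+A^{1/6+\eta}$ may be below the first few integer values of $L$, or where the asymptotic regime has not kicked in), but there the left-hand side is trivially bounded and $C_m$ can be enlarged to absorb this finite range.

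The main obstacle, and the only genuinely delicate point, is securing the right \emph{uniformity} in the BDJ tail bound: Proposition \ref{Prop:BDJ} is stated only as convergence in law, which by itself does not control the tails at the polynomial (let alone stretched-exponential) rate needed here. The cleanest route is to cite the exact integral/determinantal upper-tail estimate in \cite[Th.1.2]{BDK}, which provides precisely such a uniform bound for the Poissonized longest increasing subsequence; alternatively one can appeal to the superexponential upper-tail large-deviation results for the Hammersley process. I would therefore frame the proof as: (1) reduce to squares by scaling, (2) apply the uniform BDJ upper-tail bound to convert the polynomial-in-$A$ deviation into a stretched-exponential probability, and (3) dominate the stretched exponential by an arbitrary inverse power $A^{-\eta m}$, absorbing the bounded small-$A$ regime into the constant $C_m$.
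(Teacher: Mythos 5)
Your overall plan is the right one and is essentially the paper's: the statement is a direct consequence of the quantitative control in \cite[Th.1.2]{BDK}, after noting that the law of $L(\mathcal{R})$ depends only on ${\sf Area}(\mathcal{R})$. However, the mechanism in the paper is simpler than the one you describe, and your attribution is off in a way that matters. Theorem 1.2 of \cite{BDK} is \emph{not} a stretched-exponential upper-tail estimate; it is the convergence of the moments of $\left(L_N-2\sqrt{N}\right)/N^{1/6}$ to those of the Tracy--Widom law, which in particular gives a uniform bound $\E\left[\left(\left(L_N-2\sqrt{N}\right)_+/N^{1/6}\right)^m\right]\leq C_m$ for each fixed $m$. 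The paper then simply applies Markov's inequality at the threshold $N^{1/6+\eta}=N^{1/6}\cdot N^{\eta}$ to get the bound $C_m/N^{\eta m}$ directly --- no exponential tail, and no need to dominate a stretched exponential by a polynomial. Your route (a uniform bound of the form $\P(L(s,s)\geq 2s+s^{1/3}x)\leq C e^{-cx^{3/2}}$) would of course also suffice and even yields a stronger conclusion, but such a bound is not what Th.1.2 states, so as written the key input of your proof is misquoted; you would need to cite a genuine moderate-deviation/upper-tail estimate (or, more economically, just use the moment convergence plus Markov as the paper does). You correctly identify that Proposition \ref{Prop:BDJ} alone (convergence in law) is insufficient, and your scaling reduction to squares and the absorption of small areas into $C_m$ are both fine and implicit in the paper's one-line argument; the only substantive fix needed is to replace the claimed tail estimate by the moment bound and Markov's inequality.
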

\begin{proof}
The claimed inequality is obtained by applying Markov's inequality to (\cite[Th.1.2]{BDK})\footnote{Theorem 1.2 in \cite{BDK} is actually stated for longest increasing sequences in permutations but also holds for the Poissonized version which coincides with our $L(.)$, as explained in Section 8 of \cite{BDK}. See also \cite[Ch.2]{Romik} for details about de-Poissonization.}.
\end{proof}
With the help of Proposition \ref{Prop:BDJ}, we will have proved Theorem \ref{Th:FluctuLipschitz} as soon as we establish that, inside a large parallelogram $\mathcal{P}$ with a unique optimal rectangle $\mathcal{R}$, the fluctuations of $L(\mathcal{P})$ are equal to the fluctuations of $L(\mathcal{R})$, \emph{i.e.}
 non-optimal rectangles do not modify them. To this end, we show the following proposition.

\begin{prop}\label{Prop:flutupara} Let $\mu>1$ and  $\sigma, \rho>0$.
Let us denote $\mathcal{P}_1=PQRS$ the parallelogram with $Q=(0,0)$, 
$S=(\sigma,\rho)$ and such that $P=c(-1,\mu)$
and $R=c'(\mu,-1)$ for some $c,c'>0$. For $t\ge 0$, let $\mathcal{P}_t$ be the homothetic transformation of $\mathcal{P}_1$ by a factor $t$.
Assume $\mu^{-1}< \rho/\sigma < \mu$. Then
$$
\lim_{t \to \infty}\P(L({\mathcal{P}_t})\ge L(\sigma t,\rho t) +t^{\frac{1}{3}-\frac{1}{32}})=0.
$$
\emph{(Observe that we always have the inequality $L({\mathcal{P}_t})\ge L(\sigma t,\rho t)$.)}
 \end{prop}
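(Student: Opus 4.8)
The plan is to follow the strategy already used for Proposition~\ref{Prop:OptiLosange}. Discretize the lower-left boundary $P_tQ_tR_t$ of $\mathcal{P}_t$ into points $\{A_i\}$ and the upper-right boundary $P_tS_tR_t$ into points $\{B_j\}$ with spacing $\sim t^{-2}$, so that on an event of probability $\to 1$ the thin triangles between $\mathcal{P}_t$ and its inscribed rectangles contain no point of $\Xi$, whence $L(\mathcal{P}_t)=\max_{i,j}L(R_{i,j})$ exactly, with $R_{i,j}=[A_i^x,B_j^x]\times[A_i^y,B_j^y]$. Write $\mathcal{R}_t=[0,\sigma t]\times[0,\rho t]$, the unique maximizing rectangle of Lemma~\ref{Lem:geometrique_rho} (so $L(\mathcal{R}_t)=L(\sigma t,\rho t)$); it suffices to control $\max_{i,j}L(R_{i,j})-L(\sigma t,\rho t)$. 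The input computation, in the spirit of Lemma~\ref{Lem:geometrique_rho}, is that moving an endpoint a distance $s$ into an ear along the boundary decreases the area \emph{linearly}: $\mathrm{Area}(R_{i,j})\le \sigma\rho t^2-C_1 t\,(d_{A_i}+d_{B_j})$ for some $C_1>0$, where $d_{A_i},d_{B_j}$ are the boundary distances to $Q_t,S_t$; here one uses precisely that the central case gives $\rho<\mu\sigma$ \emph{and} $\sigma<\mu\rho$, so both ears carry a strictly positive linear deficit. Hence $2\sqrt{\mathrm{Area}(R_{i,j})}\le 2\sqrt{\sigma\rho}\,t-C_2(d_{A_i}+d_{B_j})$.

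I would then split the pairs $(i,j)$ into two classes. For the \emph{far} rectangles, those with $d_{A_i}+d_{B_j}\ge t^{1/3+3\eta}$, the mean already obeys $2\sqrt{\mathrm{Area}(R_{i,j})}\le 2\sqrt{\sigma\rho}\,t-C_2 t^{1/3+3\eta}$, so the one-point upper-tail bound of Proposition~\ref{Prop:Flu_Ln_continu} (with $m$ large) gives $L(R_{i,j})\le 2\sqrt{\sigma\rho}\,t-\tfrac{C_2}{2}t^{1/3+3\eta}$ for all of them simultaneously with probability $\to 1$, the union bound over the $\le t^6$ rectangles being absorbed by the super-polynomial decay $\mathrm{Area}^{-\eta m}$. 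Since $L(\sigma t,\rho t)\ge 2\sqrt{\sigma\rho}\,t-t^{1/3+\eta}$ with high probability (lower tail of the Tracy--Widom fluctuations, Proposition~\ref{Prop:BDJ}), no far rectangle can reach $L(\sigma t,\rho t)$. This reduces everything to the \emph{near} rectangles, whose endpoints lie within $O(t^{1/3+3\eta})$ of $Q_t$ and $S_t$.

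The near rectangles are the heart of the matter, and I expect this to be the main obstacle: both $L(R_{i,j})$ and $L(\sigma t,\rho t)$ have $t^{1/3}$-scale marginal fluctuations, so a bound of order $t^{1/3-1/32}$ on their difference cannot come from one-point estimates — one must exploit that the two functionals share almost all of their randomness. Treating one ear at a time (say a start at depth $s$, so $A=(-s,\mu s)$, $B=S_t$, the other ear being symmetric and additive), I would cut a maximizing path of $R_{i,j}$ at the edge $x=0$ into a piece in the thin ear strip $[-s,0]\times[\mu s,H]$ and a piece in the overlap $O_s=R_{i,j}\cap\mathcal{R}_t=[0,\sigma t]\times[\mu s,\rho t]\subset\mathcal{R}_t$, where $H$ is the crossing height. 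The strip has width $O(t^{1/3+3\eta})$, hence area $O(t^{4/3+3\eta})$ and fluctuation only $O(t^{2/9+\eta'})\ll t^{1/3}$ by Proposition~\ref{Prop:Flu_Ln_continu}. Because $O_s$ uses the \emph{same} points as $\mathcal{R}_t$, the overlap piece must be charged against $L(\sigma t,\rho t)$ rather than bounded by an independent one-point estimate: the key deterministic identity is $\max_H\big[\,2\sqrt{s(H-\mu s)}+2\sqrt{\sigma t(\rho t-H)}\,\big]=2\sqrt{\sigma\rho}\,t-s\,(\mu\sigma-\rho)/\sqrt{\sigma\rho}$, a genuine deficit, expressing that entering $\mathcal{R}_t$ high up forfeits a band whose foregone length grows linearly in the offset and cancels the strip gain.

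Carrying this correlation through — discretizing the entry height $H$ and the depth $s$, invoking Proposition~\ref{Prop:Flu_Ln_continu} with $m$ large to control each cell, and union-bounding over the polynomially many cells — leaves only the strip fluctuation and the discretization error against the linear offset. Optimizing the depth scale against these competing error exponents is what produces the explicit value $t^{1/3-1/32}$. (Once Proposition~\ref{Prop:flutupara} is established, Theorem~\ref{Th:FluctuLipschitz}(i) follows by combining it with the trivial bound $L(\mathcal{P}_t)\ge L(\sigma t,\rho t)$, Proposition~\ref{Prop:BDJ}, and the coupling of Proposition~\ref{Prop:Coupling2}.)
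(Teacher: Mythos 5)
Your treatment of the far rectangles (offset $\ge t^{1/3+3\eta}$) is essentially the paper's Case (A) and is sound: there the area deficit is $\gg t^{1/3+\eta}$, so the one-point tail bound of Proposition~\ref{Prop:Flu_Ln_continu} with $m$ large, a union bound over the polynomially many discretized rectangles, and the lower tail of $L(\sigma t,\rho t)$ settle it. The gap is in the near rectangles, exactly where you flag ``the heart of the matter''. You correctly diagnose that one must exploit the shared randomness between $L(R_{i,j})$ and $L(\sigma t,\rho t)$, but the mechanism you propose does not actually do so. After cutting the path at $x=0$, what you need is a \emph{lower} bound on the difference $L(\sigma t,\rho t)-L([0,\sigma t]\times[H,\rho t])$ that is close to its mean $H\sqrt{\sigma/\rho}=O(s)$ with an error $\ll t^{1/3}$ (indeed $\le t^{1/3-1/32}$), uniformly over $H$ and over depths $s\le t^{1/3+3\eta}$. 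Your deterministic identity controls only the \emph{means} of the pieces; the only probabilistic tool you invoke, Proposition~\ref{Prop:Flu_Ln_continu}, is a marginal one-point estimate whose precision on a rectangle of area $\asymp t^2$ is $t^{1/3+\eta}$ --- larger than both the deficit $s(\mu\sigma-\rho)/\sqrt{\sigma\rho}$ and the target $t^{1/3-1/32}$ whenever $s\lesssim t^{1/3}$. The trivial inclusion bound $L([0,\sigma t]\times[H,\rho t])\le L(\sigma t,\rho t)$ goes the wrong way: it discards the deficit entirely and leaves the strip contribution $2\sqrt{s(H-\mu s)}$, which can be as large as $t^{2/3+3\eta/2}$. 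So ``carrying the correlation through'' is not a bookkeeping step that union bounds can absorb; it is a genuine estimate on the local increment of the passage time under an endpoint perturbation at scale $t^{1/3}$, and it cannot be extracted from one-point bounds.

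The paper supplies precisely this missing ingredient through the exactly solvable stationary structure: it introduces the Hammersley process with sources of intensity $\lambda$ and sinks of intensity $\lambda^{-1}$, for which boundary increments such as $L_\lambda(\sigma t,\rho t)-L_\lambda(\sigma t,\rho t-\mu j t^{\delta})$ are exactly independent Poisson variables (Lemma~\ref{lem:SourcesPuitsStationnaires}); a crossing lemma (Lemma~\ref{Lem:domisource}) transfers the comparison $L(\mathcal{R}_{uv})-L(\sigma t,\rho t)\le L_\lambda(\mathcal{R}_{uv})-L_\lambda(\sigma t,\rho t)$ back to the sourceless process on the event $Z_\lambda>0$; and the Cator--Pimentel estimate (Lemma~\ref{Lem:decol}) bounds $\P(Z_{\lambda^\star_+}=0)\le C_1/t^{1/4}$ for the tilted intensity $\lambda^\star_+=\sqrt{\rho/\sigma}(1+t^{-1/4})$. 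The hypothesis $\mu^{-1}<\rho/\sigma<\mu$ enters exactly to make the two Poisson means differ in the favourable direction. Without this machinery (or an equivalent local-fluctuation input), your near-rectangle case does not close, so the proposal as written does not prove the proposition.
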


Proposition \ref{Prop:flutupara}  says that the difference between  $L({\mathcal{P}_t})$ and $ L(\sigma t,\rho t)$ is a little-o of the fluctuations.
Let us prove formally that it implies Theorem \ref{Th:FluctuLipschitz}.

\begin{proof}[Proof of Proposition \ref{Prop:flutupara} $\Rightarrow$ Theorem \ref{Th:FluctuLipschitz}]
 Using  Proposition \ref{Prop:Coupling2}, we know that $L^{\alpha,\beta}(at,bt)$ has the law of $L({\mathcal{P}_t})$ where $\mathcal{P}_t$ is the homothetic transformation of $PQRS$ by a factor $t$, where
$$
P:c(-1,\mu),\quad Q:(0,0),\quad R:c'(\mu,-1)),\quad S:(\sigma,\rho),
$$
and $c,c',\mu,\sigma,\rho$ are defined by \eqref{eq:Def_cc'mu},\eqref{eq:Def_rhosigma}.

Recall from \eqref{eq:Encadrement_ab_versus_sigmarho} that the case
$
\displaystyle{\frac{2}{1/\alpha+1/\beta} < \frac{b}{a}<\frac{\beta+\alpha}{2}}
$
corresponds to $0< \mu^{-1}< \rho/\sigma< \mu$.
Thus, $\mathcal{P}_t$ is an homothetic transformation of a parallelogram satisfying the condition of Proposition \ref{Prop:flutupara}. Hence, we get that
\begin{equation*}
\frac{L^{\alpha,\beta}(at,bt) -L(\sigma t,\rho t)}{t^{1/3}}
\end{equation*}
converges in probability to $0$.
 We conclude using Proposition \ref{Prop:BDJ} and noticing that $2\sqrt{\sigma \rho}=f^{\alpha,\beta}(a,b)$.
\end{proof}

\begin{proof}[Proof of Proposition \ref{Prop:flutupara}]

To prove Proposition \ref{Prop:flutupara} we need to bound, for any rectangle of the form
$$
\mathcal{R}_{uv}:=[-u,\sigma t+v]\times[\mu^{\mbox{\tiny{sign(u)}}}u,\rho t - \mu^{\mbox{\tiny{sign(v)}}}v]
$$
inside $\mathcal{P}_t$ (and with two vertices on the boundary of $\mathcal{P}_t$), the probability that $L(\mathcal{R}_{uv})-L(\sigma t,\rho t)$ is larger than $t^{1/3-1/32}$. In fact, we will establish the following equation
\begin{equation}\label{Eq:supsurrectangle}
\lim_{t \to \infty}\P(\sup_{(u,v)\in\mathcal{U}} L(\mathcal{R}_{uv})-L(\sigma t,\rho t)\ge t^{1/3-1/32})=0
\end{equation}
where $\mathcal{U}$ is the set of value $(u,v)$ for which $\mathcal{R}_{uv}$ is inside $\mathcal{P}_t$.
To this end, we set 
\begin{equation*}
\kappa=\frac{1}{3}+\frac{1}{32}
\end{equation*}
and we  treat separately two cases.
\begin{itemize}
\item[(A)] Either $|u|>t^{\kappa}$ or $|v|>t^\kappa$. Then the rectangle $\mathcal{R}_{uv}$ is sufficiently smaller than the optimal rectangle $[0,\sigma t]\times[0,\rho t]$ such that the law of large number for $L(\mathcal{R}_{uv})$ will insure that it is very unlikely that $L(\mathcal{R}_{uv})\ge 2\sqrt{\sigma\rho} t -t^{1/3+\eta}$ (for some $\eta$ to be determined).
\item[(B)] Or $|u|<t^{\kappa}$ and $|v|<t^\kappa$.  Then $\mathcal{R}_{uv}$ and $[0,\sigma t]\times [0,\rho t]$ are so close to each other that necessarily $L(\mathcal{R}_{uv})-L(\sigma t,\rho t)$ can not be too large.
\end{itemize}

\subsubsection*{The case (A): $|u|>t^{\kappa}$ or $|v|>t^\kappa$}
 By symmetry, we only consider here the case $u>t^{\kappa}$ and $v\ge 0$ but the reader can easily adapt the proof to the other cases.
 As in the proof of Proposition \ref{Prop:OptiLosange}, we discretize the problem. More precisely, let $u_i=v_i=i/t^2$ and denote 
 $\mathcal{I}$ the set of indices $(i,j)$ such that $\mathcal{R}_{u_iv_j}$ satisfies $u_i>t^{\kappa}$ and $v_j\ge 0$ and is inside $\mathcal{P}_t$ namely
 $$\mathcal{I}=\{(i,j)\in \Z_+^2, u_i>t^\kappa, (u_i, v_j)\in\mathcal{U}\}.$$

 As in the proof of Proposition \ref{Prop:OptiLosange}, we have, with probability tending to 1 as $t$ tend to infinity
 \begin{equation}\label{Eq:discegaltou}
 \sup \{L(\mathcal{R}_{uv}),(u,v)\in\mathcal{U},u>t^{\kappa},v\ge 0\}=\sup \{L(\mathcal{R}_{u_iv_j}),(i,j)\in\mathcal{I}\}
 \end{equation}
and we have   $|\mathcal{I}|\le Nt^6$ for some constant $N:=N(\sigma,\rho,\mu)$. Besides,
 we have (some insight for the below computation will be given in  Remark \ref{rem:KPZ})
 \begin{align*}
{\sf Area}(\mathcal{R}_{u_iv_j})
&={\sf Area}\left(  [-u_i,\sigma t+v_j]\times[\mu u_i,\rho t - \mu v_j] \right)  \\
&=\sigma\rho t^2 -t(u_i+v_j)(\sigma \mu-\rho)-\mu(u_i+v_j)^2\le \sigma \rho t^2 -t^{\kappa+1}(\sigma\mu-\rho),
\end{align*}
since $u_i >t^\kappa$. Hence 
$$\sqrt{\sigma\rho} t-\sqrt{{\sf Area}(\mathcal{R}_{u_iv_j})}= \frac{\sigma\rho t^2- {\sf Area}(\mathcal{R}_{u_iv_j})}{\sqrt{\sigma\rho} t+\sqrt{{\sf Area}(\mathcal{R}_{u_iv_j})}} \geq  \frac{\sigma\rho t^2- {\sf Area}(\mathcal{R}_{u_iv_j})}{2\sqrt{\sigma\rho} t} \geq \frac{\sigma\mu-\rho}{2\sqrt{\sigma\rho}}t^{\kappa}.
$$
Thus for $\eta >0$
$$
 \P(L(\mathcal{R}_{u_iv_j})\ge 2\sqrt{\sigma\rho} t -t^{1/3+\eta})\le \P(L(\mathcal{R}_{u_iv_j})-2\sqrt{{\sf Area}(\mathcal{R}_{u_iv_j})}\ge \frac{\sigma\mu-\rho}{\sqrt{\sigma\rho}}t^{\kappa}-t^{1/3+\eta}).
$$
Choose $\eta=1/65$ such that
  $1/3+2\eta<\kappa$ and so that $t^\kappa$ is asymptotically larger than ${\sf Area}(\mathcal{R}_{u_iv_j})^{1/6+\eta}$. Since  by assumption $\sigma \mu-\rho>0$ then Proposition \ref{Prop:Flu_Ln_continu} with $m=7/2\eta$ yields the existence of a constant $\tilde{C}_m:=\tilde{C}_m(\sigma,\rho,\mu)$ such that
 $$
\forall (i,j)\in \mathcal{I}, \quad\P(L(\mathcal{R}_{u_iv_j})\geq 2\sqrt{{\sf Area}(\mathcal{R}_{u_iv_j})} + \frac{\sigma\mu-\rho}{\sqrt{\sigma\rho}}t^{\kappa})
\le \frac{C_m}{\left({\sf Area}(\mathcal{R}_{u_iv_j})\right)^{m\eta }}
\le \frac{\tilde{C}_m}{t^{7}}
$$
 and finally
  $$
\P(\sup_{(i,j)\in \mathcal{I}}L(\mathcal{R}_{u_iv_j})\ge 2\sqrt{\sigma\rho} t -t^{1/3+\eta})\le |\mathcal{I}|\frac{\tilde{C}_m}{t^{7}} \leq  \frac{N \tilde{C}_m}{t}.
$$
Case $u\geq t^\kappa,v\leq 0$ is treated in the same way and the cases where $u<-t^\kappa$ hold by symmetry. Besides, Proposition \ref{Prop:Flu_Ln_continu} implies that
$$\lim_{t \to \infty}\P(L(\sigma t,\rho t)\le 2\sqrt{\sigma\rho} t -t^{1/3+\eta})=0.$$
Combining this with \eqref{Eq:discegaltou}  we get 
 \begin{equation}\label{eq_revision1}
\lim_{t \to \infty}\P(\sup\{L(\mathcal{R}_{uv}),(u,v)\in\mathcal{U},|u|>t^{\kappa} \mbox{ or } |v|>t^\kappa\}-L(\sigma t,\rho t)\ge 0)=0
\end{equation}
 which finishes the case (A) of our proof.

\begin{remark}\label{rem:KPZ}{\bf (Discrepancy with the KPZ scaling)}   In order to establish the relation $\chi=2\xi -1$ between the scaling  exponents $\chi$ and $\xi$ it is natural (see \cite{JohanssonTransversal} to make this rigorous) to estimate the quantity
$$
L(\sigma t, \rho t) - L(\sigma (t+t^\xi),\rho (t-t^\xi))\approx 2\sqrt{\sigma \rho t^2}-2\sqrt{\sigma \rho(t+t^\xi)(t-t^\xi)}\approx \sqrt{\sigma \rho}t^{2\xi -1}, 
$$ 
which should be of the same order as the fluctuations $t^\chi$. In the proof of case (i) we need to consider a perturbation along another  (unusual) direction and therefore the leading term in the above approximation is of different order. Indeed (recall $\mu >\rho/\sigma$)
$$
L(\sigma t, \rho t) - L(\sigma (t+t^\xi),\rho (t-  t^\xi \mu\sigma/\rho))
\approx 2\sqrt{\sigma \rho t^2}-2\sqrt{\sigma \rho(t+t^\xi)(t-\tfrac{\mu\sigma}{\rho}t^\xi)}
\approx  \sqrt{ \sigma\rho (1-\tfrac{\mu\sigma}{\rho}  )} \ t^{\xi}.
$$ 
\end{remark}
 
\subsubsection*{The case (B): $|u|<t^{\kappa}$ and $|v|<t^\kappa$}
This case is more delicate than the previous one. 
We discretize again the boundary of $\mathcal{P}_t$ but with a different scaling than before. All along case (B) we put
$$
\delta=\frac{1}{3}-\frac{1}{16} \qquad \mbox{ and }\qquad  u_i=v_i=it^\delta \quad \mbox{ for }\quad  |i|,|j|  \le t^{\kappa-\delta}.
$$  

The main ideas of the proof of case (B) are inspired by the proof of Theorem 2 in \cite{CatorPimentel} and require some  important tools regarding the Hammersley process. In particular, we will use 
\begin{itemize}
\item the graphical representation of \emph{Hammersley lines} (implicitly introduced by Hammersley
 \cite{HammersleyHistorique}) ;
\item the \emph{sinks/sources} approach introduced in \cite{CatorGroeneboom}.
\end{itemize}

\paragraph{Sinks and sources.}
\begin{figure}
\begin{center}
\includegraphics[width=9cm]{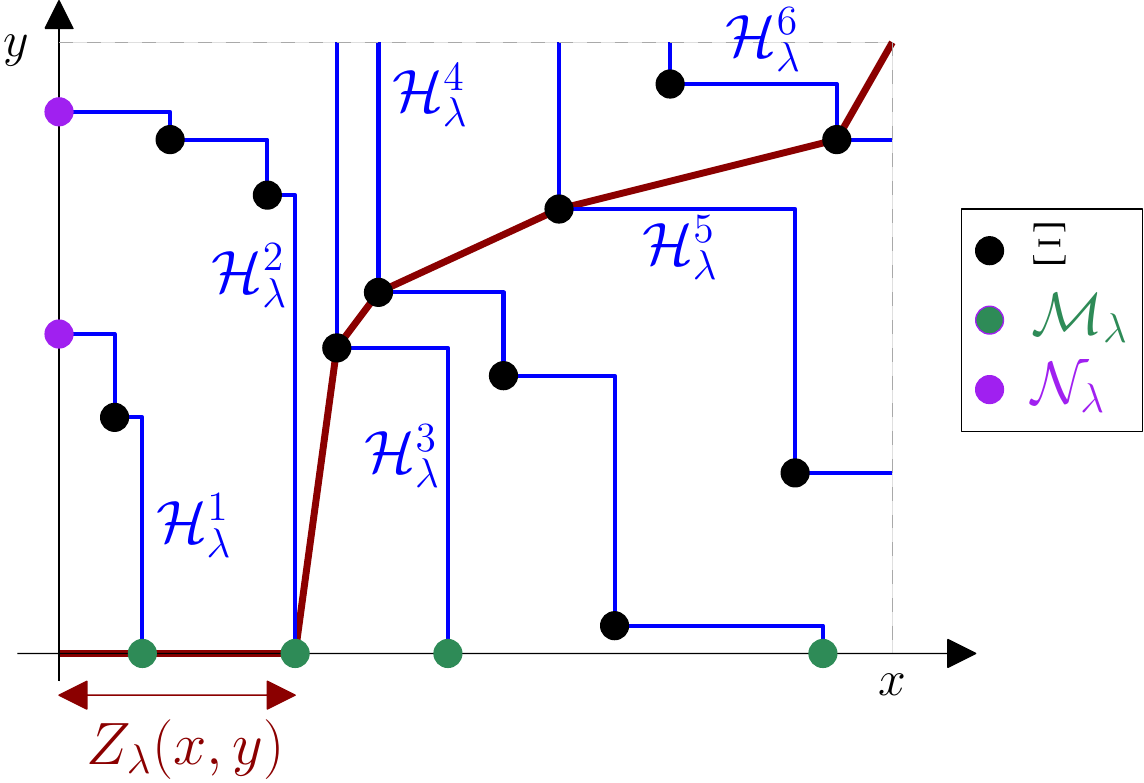}
\caption{Summary of notations: sources, sinks, Hammersley lines. On this sample we have $L_{\lambda} (x,y)=6$ and an optimal path (drawn in dark red) goes through two sources.}
\label{Fig:NotationsSourcesPuits}
\end{center}
\end{figure}

We introduce the Hammersley process with \emph{sinks} and \emph{sources}. 
For  $\lambda>0$ we consider the point process $\Xi^{\lambda}$ on the plane defined by
$$
\Xi^{\lambda}:= \Xi \cup \mathcal{M}_\lambda \cup \mathcal{N}_{\lambda}
$$
where $\mathcal{M}_\lambda$ is a one dimensional Poisson point process of intensity $\lambda$ on the $x$-axis, $\mathcal{N}_{\lambda}$ is a  one dimensional Poisson point process of intensity $\lambda^{-1}$ on the $y$-axis (and $\Xi,\mathcal{M}_\lambda,\mathcal{N}_{\lambda}$ are independent). A point on $\mathcal{M}_\lambda$ (resp. on $\mathcal{N}_{\lambda}$) is called a source (resp. a sink).

We will modify $L$ by considering additional points of $\mathcal{M}_\lambda \cup \mathcal{N}_{\lambda}$:
\begin{multline*}
L_{\lambda} (x,y)=
\max\bigg\{L; \hbox{ there are }(u_1,v_1),\dots,(u_L,v_L)\in \Xi \cup \mathcal{M}_\lambda \cup \mathcal{N}_{\lambda} \text{ such that }\\
(0,0) \preccurlyeq (u_1,v_1) \preccurlyeq (u_2,v_2)  \preccurlyeq \dots \preccurlyeq (u_L,v_L) \preccurlyeq (x,y)
\bigg\}.
\end{multline*}

\paragraph{Hammersley lines.}
 For $1\leq \ell \leq L_{\lambda}(x,y)$ the $\ell$-th \emph{Hammersley line} $\mathcal{H}^\ell_{\lambda}$  is defined as the lowest level set of level $\ell$ for the function $(x,y)\mapsto L_{\lambda}(x,y)$ (see Fig. \ref{Fig:NotationsSourcesPuits}, Hammersley lines are drawn in blue). It is easy to see that by construction 
every Hammersley line is a simple curve composed of an alternation of South/East straight lines.

It was shown and exploited by Cator and Groeneboom \cite{Groeneboom,CatorGroeneboom} that for the choice of intensities $\lambda$ (sources) and $\lambda^{-1}$ (sinks) the process of Hammersley lines is stationary either seen from bottom to top or from left to right. 
As a consequence we obtain the following non-asymptotic estimate.
\begin{lem}[\cite{CatorGroeneboom}, Theorem 3.1]\label{lem:SourcesPuitsStationnaires}
\ 
For every $0<x_0<x_1$ and $0<y_0<y_1$ we have: 
\begin{itemize}
\item {\bf Stationarity of sources.}
$$
\mathcal{S}_1:=L_{\lambda}(x_1,y_0)- L_{\lambda}(x_0,y_0) \stackrel{\text{(d)}}{=} \text{Poisson r.v. with mean }\lambda(x_1-x_0).
$$
\item {\bf Stationarity of sinks.} For every $x>0$ and every $y_0<y_1$,
$$
\mathcal{S}_2:=L_{\lambda}(x_0,y_1)- L_{\lambda}(x_0,y_0) \stackrel{\text{(d)}}{=} \text{Poisson r.v. with mean }\frac{1}{\lambda}(y_1-y_0).
$$
\item Furthermore, $\mathcal{S}_1$ and $\mathcal{S}_2$ are independent. 
\end{itemize}

\end{lem}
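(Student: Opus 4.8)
The plan is to recognize this as a Burke-type (output) theorem for Hammersley's interacting particle system and to reduce both Poissonian laws and the independence to the equilibrium theory of that system. First I would read the vertical coordinate $y$ as time and encode the function $x\mapsto L_{\lambda}(x,y)$ through its jumps: the positions at which the Hammersley lines $\mathcal{H}^\ell_{\lambda}$ cross the horizontal line at height $y$ form a locally finite configuration $\eta_y$ on $(0,\infty)$, and $\mathcal{S}_1$ is exactly the number of points of $\eta_{y_0}$ lying in $(x_0,x_1]$. Under this encoding $(\eta_y)_{y\ge 0}$ is a Markov process: its initial configuration $\eta_0$ is the set of sources $\mathcal{M}_\lambda$ (Poisson of intensity $\lambda$), each atom of $\Xi$ at $(u,y)$ relocates the crossing immediately to the right of $u$ onto $u$ (the Hammersley jump), and each sink of $\mathcal{N}_\lambda$ injects a new line at the left boundary. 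With this dictionary the three assertions become statements about $(\eta_y)$.

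The \textbf{stationarity of sources} is the statement that the law of $\eta_y$ is, for every $y$, Poisson of intensity $\lambda$, so that its restriction to $(x_0,x_1]$ is Poisson with mean $\lambda(x_1-x_0)$. I would establish invariance of the intensity-$\lambda$ Poisson law under the dynamics by the generator computation in the spirit of Aldous--Diaconis, carried out with boundaries as in Cator--Groeneboom: the Hammersley jumps alone preserve the family of Poisson laws but drain intensity at the left boundary at a rate equal to the local intensity, and the sink injection at rate $\lambda^{-1}$ is tuned precisely to balance this outgoing flux when the intensity equals $\lambda$ (this is why the dual rates $\lambda$ and $\lambda^{-1}$ appear). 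Checking that the generator annihilates the Poisson($\lambda$) measure is short but genuinely necessary; once it is done, the spatial independence of a Poisson configuration yields the Poisson law of $\mathcal{S}_1$.

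The \textbf{stationarity of sinks} I would not prove from scratch but deduce from the reflection symmetry of the whole construction across the diagonal $\{x=y\}$. The driving field $\Xi$ is an isotropic Poisson process, hence invariant in law under $(x,y)\mapsto(y,x)$; this reflection exchanges the roles of $\mathcal{M}_\lambda$ and $\mathcal{N}_\lambda$ and of the intensities $\lambda$ and $\lambda^{-1}$, and turns horizontal increments of $L_{\lambda}$ into vertical ones. Applying the already-established stationarity of sources to the reflected process gives that $\mathcal{S}_2$ is Poisson with mean $\lambda^{-1}(y_1-y_0)$.

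The hard part is the \emph{independence} of $\mathcal{S}_1$ and $\mathcal{S}_2$, which is the true Burke phenomenon and does not follow from the two marginals. Here I would argue by reversibility. The key point is that, in equilibrium, the increments of $L_{\lambda}$ along the two half-lines emanating eastward and northward from the corner $(x_0,y_0)$ can be matched, through a time/space reversal of the stationary Hammersley process, with the \emph{incoming} source and sink configurations, which are independent by construction since $\mathcal{M}_\lambda$ and $\mathcal{N}_\lambda$ are independent Poisson processes. Concretely I would fix the corner at $(x_0,y_0)$, run the reversed process, and show that $\mathcal{S}_1$ is a measurable function of the reversed sources while $\mathcal{S}_2$ is a measurable function of the reversed sinks, the two being independent. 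Making this reversal rigorous --- identifying the correct dual process and verifying that the last-passage values transform as claimed --- is exactly the content of Theorem 3.1 of \cite{CatorGroeneboom}, and it is where essentially all the difficulty of the lemma is concentrated; the two marginal computations above are comparatively routine.
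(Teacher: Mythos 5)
The paper does not prove this lemma: it is imported verbatim from Cator--Groeneboom (their Theorem 3.1) and used as a black box, so there is no internal proof to compare yours against. Your sketch is a fair account of how the result is actually established in that reference --- marginal stationarity of the particle process, a reflection across the diagonal to transfer it to the sinks, and a time-reversal (Burke-type) argument for the independence, which you rightly single out as the only genuinely hard point. Note, however, that as written your argument is not self-contained: for the independence you end by invoking ``exactly the content of Theorem 3.1 of \cite{CatorGroeneboom}'', i.e.\ the statement being proved, so in substance you are doing what the paper does (citing the result) plus an outline of the cited proof.

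One concrete inaccuracy in your dictionary is worth fixing. In the picture where $y$ is time and $\eta_y$ is the set of jump locations of $x\mapsto L_\lambda(x,y)$ on $(0,\infty)$, the bulk Hammersley moves push particles \emph{leftward} (the nearest particle to the right of a $\Xi$-point jumps onto it), and a sink at height $s$ does not ``inject a new line at the left boundary'': it \emph{absorbs} the leftmost particle, since $L_\lambda(0,y)$ jumps by one while $L_\lambda(x,y)$ is unchanged for $x$ beyond the first particle, so $\#\bigl(\eta_y\cap(0,x]\bigr)=L_\lambda(x,y)-L_\lambda(0,y)$ drops by one. Consequently the flux-balance heuristic should be stated the other way around: in equilibrium the leftward particle flux across any fixed location is $\lambda^{-1}$, and the sinks provide absorption at the boundary at precisely that rate; without them particles would accumulate near $0$ and Poisson$(\lambda)$ would not be preserved. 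This does not derail the strategy --- the generator computation, once actually performed with the correct dynamics, goes through --- but the heuristic as you wrote it has the roles of the bulk dynamics and the sinks reversed.
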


We now introduce the random variable $Z_\lambda(x,y)$ which represents the maximal length that an optimal path spends on the $x$-axis:
$$
Z_\lambda(x,y) = \sup \set{\xi\geq 0 ; L_{\lambda}(x,y)= \mathrm{card}(\mathcal{M}_\lambda \cap [0,\xi]) + L([\xi,x]\times [0,y])  },
$$
where $\sup \varnothing = 0$.

\begin{lem} \label{Lem:decol}
There exists some constant $C_1:=C_1(\sigma,\rho)$ such that, for any $t>0$, 
$$
\mathbb{P}(Z_{\lambda^\star_+}(\sigma t,\rho t)= 0)\leq \frac{C_1}{t^{1/4}}
$$
with $\lambda^\star_+:=\sqrt{\rho/\sigma}(1+t^{-1/4})$.
\end{lem}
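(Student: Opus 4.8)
The plan is to reduce the event $\{Z_{\lambda}(\sigma t,\rho t)=0\}$ to a comparison between two last-passage values and then to exploit the exact first moments supplied by the stationarity Lemma~\ref{lem:SourcesPuitsStationnaires}. Throughout write $\lambda=\lambda^\star_+=\sqrt{\rho/\sigma}\,(1+t^{-1/4})$, and note that $\sqrt{\rho/\sigma}$ is exactly the density for which $(\sigma,\rho)$ is the characteristic direction, since $\lambda\mapsto\lambda\sigma+\rho/\lambda$ is minimized there; thus $\lambda^\star_+$ is slightly super-critical. First I would observe that, since an increasing path leaving the origin meets at most one of the two axes, the event $\{Z_{\lambda}=0\}$ coincides with $\{L_{\lambda}(\sigma t,\rho t)=L^{\downarrow}_{\lambda}(\sigma t,\rho t)\}$, where $L^{\downarrow}_{\lambda}(\sigma t,\rho t):=\sup_{\eta\ge 0}\big(\card(\mathcal N_\lambda\cap[0,\eta])+L([0,\sigma t]\times[\eta,\rho t])\big)$ is the largest length of a path that collects no source. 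In particular $\P(Z_\lambda=0)\le \P(V\le L^{\downarrow}_{\lambda})$ for any source-using lower bound $V\le L_\lambda$, because $V\le L_\lambda=L^{\downarrow}_\lambda$ on $\{Z_\lambda=0\}$.

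For $V$ I would fix the deterministic exit abscissa $a:=t(\sigma-\rho/\lambda^2)$ (which is positive precisely because $\lambda$ is super-critical) and set $V:=\card(\mathcal M_\lambda\cap[0,a])+L([a,\sigma t]\times[0,\rho t])$, the length of the path collecting every source on $[0,a]$ and then crossing the bulk. The two summands are independent; the first is a Poisson variable of mean $\lambda a$, and by Proposition~\ref{Prop:AldousDiaconis} the second has mean $\approx 2\sqrt{(\sigma t-a)\rho t}$. The value $a$ is chosen to maximize the sum of these two means, and a direct computation then gives $\E[V]\approx \lambda\sigma t+\rho t/\lambda$, which by Lemma~\ref{lem:SourcesPuitsStationnaires} is also $\E[L_\lambda(\sigma t,\rho t)]$. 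Expanding in $t^{-1/4}$ one finds $\E[V]=2\sqrt{\sigma\rho}\,t+\sqrt{\sigma\rho}\,t^{1/2}(1+o(1))$, i.e.\ an excess of order $t^{1/2}$ over the critical value $2\sqrt{\sigma\rho}\,t$, while $a\approx 2\sigma t^{3/4}$ so that $\lambda a=\Theta(t^{3/4})$.

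The heart of the argument is then a gap-versus-fluctuation estimate. On one side, $\E[V]$ exceeds $2\sqrt{\sigma\rho}\,t$ by $\asymp t^{1/2}$, whereas the dominant fluctuation of $V$ comes from its Poisson part, of variance $\lambda a=\Theta(t^{3/4})$ (the bulk part contributes only $t^{1/3}$-fluctuations by Proposition~\ref{Prop:BDJ}); Chebyshev's inequality therefore gives $\P\big(V\le 2\sqrt{\sigma\rho}\,t+\tfrac12\sqrt{\sigma\rho}\,t^{1/2}\big)\le \Theta(t^{3/4})/(t^{1/2})^2=\Theta(t^{-1/4})$, which is exactly the target rate and explains the choice of the perturbation $t^{-1/4}$ in $\lambda^\star_+$. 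On the other side, one must check that the source-free competitor stays below $2\sqrt{\sigma\rho}\,t+\tfrac12\sqrt{\sigma\rho}\,t^{1/2}$ with probability $1-o(t^{-1/4})$. Feeding both bounds into $\{Z_\lambda=0\}\subseteq\{V\le L^{\downarrow}_{\lambda}\}$ yields $\P(Z_\lambda=0)\le C_1 t^{-1/4}$.

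I expect the control of $L^{\downarrow}_{\lambda}$ to be the main obstacle, since it is a supremum over the sink-exit height $\eta$ and one must rule out that collecting many sinks produces an anomalously long source-free path. Here I would argue as in case~(A) of the proof of Proposition~\ref{Prop:flutupara}: in the super-critical regime the mean of $\eta\mapsto \eta/\lambda+2\sqrt{\sigma t(\rho t-\eta)}$ is strictly decreasing, with slope of order $t^{-1/4}$, so only heights $\eta\lesssim t^{5/8}$ can compete; discretizing these $\eta$ on a fine grid, applying the upper-tail estimate of Proposition~\ref{Prop:Flu_Ln_continu} to each resulting rectangle, and taking a union bound shows that $\sup_\eta$ overshoots $2\sqrt{\sigma\rho}\,t$ by at most $o(t^{1/2})$ with overwhelming probability. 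This is the only step requiring the fluctuation input of Proposition~\ref{Prop:Flu_Ln_continu}; everything else is the stationarity identity for the mean together with a single application of Chebyshev's inequality.
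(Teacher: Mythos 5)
Your argument is essentially correct, but it takes a genuinely different route from the paper. The paper's proof of Lemma~\ref{Lem:decol} is two lines: it quotes Lemma~6 of \cite{CatorPimentel}, which states $\P(Z_{1+t^{-1/4}}(t,t)=0)\le C t^{-1/4}$ for the unit-square geometry, and then transfers this to the rectangle $[0,\sigma t]\times[0,\rho t]$ by the measure-preserving diagonal rescaling $(x,y)\mapsto(x\sqrt{\rho/\sigma},\,y\sqrt{\sigma/\rho})$, under which $\lambda^\star_+$ becomes $1+t^{-1/4}$. What you propose is, in effect, a self-contained reproof of that Cator--Pimentel estimate: decompose on whether the optimizer uses sources or sinks, build an explicit source-collecting competitor $V$ with exit abscissa at the characteristic point $a=t(\sigma-\rho/\lambda^2)\asymp t^{3/4}$, observe that its mean exceeds the critical value $2\sqrt{\sigma\rho}\,t$ by $\asymp t\,(t^{-1/4})^2=t^{1/2}$ while its fluctuations are only $t^{3/8}$, and beat the sink-only competitor by a monotonicity-plus-union-bound argument. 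The computations you give (the value of $a$, the identity $\E V\approx\lambda\sigma t+\rho t/\lambda$, the $t^{1/2}$ gap, the $\Theta(t^{3/4})/t=\Theta(t^{-1/4})$ Chebyshev rate) are all right, and they explain transparently \emph{why} the exponent is $1/4$, which the paper's citation hides. The trade-off is length and the need for inputs slightly stronger than the ones you name: Proposition~\ref{Prop:AldousDiaconis} only gives $\E L(\mathcal R)=2\sqrt{\mathsf{Area}}+o(t)$, and Proposition~\ref{Prop:BDJ} (convergence in law) gives neither a variance bound nor an expectation expansion; for your Chebyshev step you need $\E L(\mathcal R)\ge 2\sqrt{\mathsf{Area}}-O(t^{1/3})$ and $\mathrm{Var}\,L(\mathcal R)=O(t^{2/3})$, which do hold but must be drawn from the moment convergence in \cite[Th.1.2]{BDK} rather than from the propositions you cite. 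With those references corrected, and the discretization/union bound for $L^{\downarrow}_{\lambda}$ written out in the style of case~(A) of Proposition~\ref{Prop:flutupara}, your proof would stand on its own where the paper's does not.
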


\begin{proof}
Lemma 6 of \cite{CatorPimentel} states that there exists some constant $C>0$ such that, for any $t\ge 0$,
$$\P(Z_{1+t^{-1/4}}(t, t) =0)\leq \frac{C}{t^{1/4}}.$$
(In \cite[Lemma 6]{CatorPimentel} we are not interested in $Z$ and $\gamma$ but only in $Z',\gamma'$.)
We just need here to rewrite their result in order to obtain the analogue for a rectangle. Using the invariance of a Poisson process  under the transformation  $(x,y)\mapsto (x\sqrt{\rho/\sigma},y\sqrt{\sigma/\rho})$, we get the equality in distribution:
$$
Z_{\lambda^\star_+}(\sigma t,\rho t)\overset{\mbox{\tiny law}}{=}Z_{\lambda^\star_+\sqrt{\sigma/\rho}}(\sqrt{\sigma\rho} t,\sqrt{ \sigma\rho} t).
$$
Since
$\lambda_+^\star=\sqrt{\rho/\sigma}(1+t^{-1/4})$
we get 
$$
\mathbb{P}(Z_{\lambda^\star_+}(\sigma t,\rho t)= 0)\leq \P(Z_{1+t^{-1/4}}(\sqrt{\sigma \rho}t, \sqrt{\sigma \rho}t)= 0)\le \frac{C}{(\sigma\rho)^{1/8}t^{1/4}}=: \frac{C_1}{t^{1/4}}.$$

\end{proof}

We now state a deterministic lemma, this kind of inequality is known as "Crossing Lemmas" in the literature.
\begin{lem}\label{Lem:domisource} For any $y\ge x$, $s\le t$ and $\lambda>0$, if the event $Z_\lambda(x,t)>0$ occurs then we have
$$
L(y,s)-L(x,t)\le L_{\lambda}(y,s)- L_{\lambda}(x,t).
$$
\end{lem}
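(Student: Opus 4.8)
\noindent
The plan is to prove the equivalent rearranged inequality
$$
L(y,s) + L_{\lambda}(x,t) \le L_{\lambda}(y,s) + L(x,t)
$$
by a path-surgery (crossing and swapping) argument, on the event $\{Z_\lambda(x,t)>0\}$. Fix a configuration on this event. Let $\pi=(p_1\preccurlyeq\dots\preccurlyeq p_m)$ be an optimal path for $L(y,s)$, so that $m=L(y,s)$ and every $p_i$ is a point of $\Xi$ in $[0,y]\times[0,s]$; in particular each $p_i$ has positive height. Since $Z_\lambda(x,t)>0$, I may choose an optimal path $\pi'=(q_1\preccurlyeq\dots\preccurlyeq q_n)$ for $L_{\lambda}(x,t)$ (so $n=L_{\lambda}(x,t)$ and $q_n\preccurlyeq(x,t)$) that \emph{begins by running along the $x$-axis}: its initial points are sources of $\mathcal{M}_\lambda$, of height $0$ and positive abscissa, after which it uses only points of $\Xi$.

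\noindent
First I would show that the (staircase curves of the) two paths must cross. Draw $\pi$ as a monotone staircase $\Gamma$ from the origin, connecting consecutive points north-then-east, and extend it to $(y,s)$; draw $\pi'$ as a staircase $\Gamma'$ connecting points east-then-north, and extend it to $(x,t)$. For small positive abscissa, $\Gamma$ sits at the positive height of its first point while $\Gamma'$ runs along the $x$-axis at height $0$, so $\Gamma$ is strictly above $\Gamma'$. At abscissa $x$ (which is $\le y$, so $\Gamma$ is defined there), the extended curve $\Gamma'$ has reached height $t$ whereas $\Gamma$ has height at most $s\le t$, so $\Gamma$ is now at or below $\Gamma'$. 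By continuity/intermediate value applied to the height difference along abscissas in $(0,x]$, the two staircases meet at a point $p^\star=(a^\star,b^\star)$ with $0<a^\star\le x$ and $0<b^\star\le s$. The crucial feature is $b^\star>0$: the crossing lies strictly above the $x$-axis. This is exactly where the hypothesis $Z_\lambda(x,t)>0$ enters, since it is what forces $\Gamma'$ to start on the axis and hence to lie below $\Gamma$ initially.

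\noindent
Then comes the surgery. Pick indices with $p_i\preccurlyeq p^\star\preccurlyeq p_{i+1}$ along $\pi$ and $q_k\preccurlyeq p^\star\preccurlyeq q_{k+1}$ along $\pi'$; transitivity gives $q_k\preccurlyeq p_{i+1}$ and $p_i\preccurlyeq q_{k+1}$. I would then form two recombined chains. The chain $(q_1,\dots,q_k,p_{i+1},\dots,p_m)$ is increasing, ends at $p_m\preccurlyeq(y,s)$, and may use sources, hence has length at most $L_{\lambda}(y,s)$. The chain $(p_1,\dots,p_i,q_{k+1},\dots,q_n)$ is increasing and ends at $q_n\preccurlyeq(x,t)$; because $b^\star>0$, all of $q_{k+1},\dots,q_n$ have positive height and are therefore genuine points of $\Xi$ (no sources), so this chain is admissible for $L(x,t)$ and has length at most $L(x,t)$. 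Summing the two lengths, each point of $\pi$ and of $\pi'$ is used exactly once, so the total equals $m+n=L(y,s)+L_{\lambda}(x,t)$, which yields
$$
L(y,s) + L_{\lambda}(x,t) \le L_{\lambda}(y,s) + L(x,t),
$$
and the claim follows after rearrangement.

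\noindent
The main obstacle is the crossing step, and specifically guaranteeing that the intersection point $p^\star$ has \emph{positive height}: this is precisely what ensures that the $\pi'$-suffix carried over into the path ending at $(x,t)$ is free of sources, so that it is counted by $L$ and not merely by $L_{\lambda}$. This positivity is in turn the content of $Z_\lambda(x,t)>0$, which lets us select an optimal path for $L_{\lambda}(x,t)$ emanating from the $x$-axis. The remaining points — the precise staircase bookkeeping at the crossing and the verification that both recombined chains stay inside their target rectangles — are routine once one uses $x\le y$ and $s\le t$.
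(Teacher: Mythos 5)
Your proof is correct and is essentially the paper's own argument: the paper likewise produces a crossing of an optimal $L_\lambda(x,t)$-path (started on the $x$-axis, which is exactly what $Z_\lambda(x,t)>0$ provides) with an optimal $L(y,s)$-path and swaps the suffixes at the crossing, the source-free suffix being admissible for $L(x,t)$. The only cosmetic difference is that you state the conclusion in the rearranged form $L(y,s)+L_\lambda(x,t)\le L_\lambda(y,s)+L(x,t)$, whereas the paper (following Lemma 2 of Cator--Pimentel) records the same four path-pieces as two separate inequalities.
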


\begin{proof}
For $s=t$ this is Lemma 2 in \cite{CatorPimentel}. The diagram below explains how it can be extended for every $s\le t$: the dashed line represents an optimal path for $L_\lambda(x,t)$ while the plain line  represents an optimal path for $L(y,s)$. On the event $Z_\lambda(x,t)>0$ the dashed line crosses the plain line and these paths can be decomposed respectively as  \ding{193}$+$\ding{195} and  \ding{192}$+$\ding{194} as follows:
\begin{center}
\includegraphics[width=6cm]{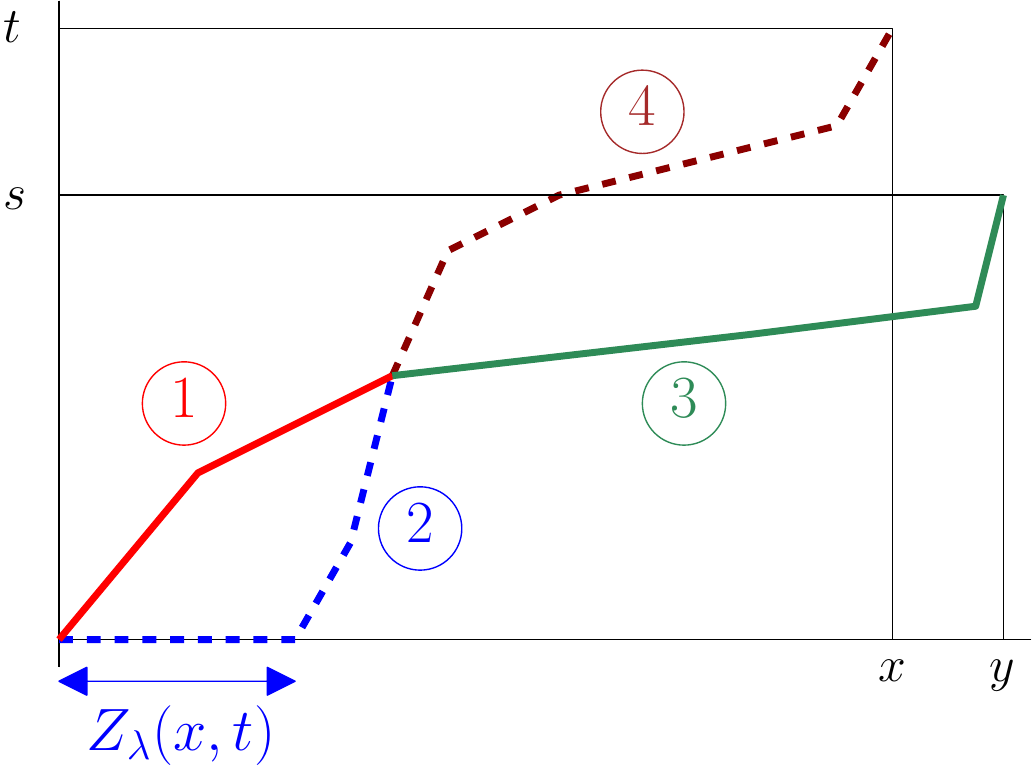}
\end{center}
We have that
$$
L(y,s)=\text{\ding{192}}+\text{\ding{194}},\qquad L(x,t)\geq \text{\ding{192}}+\text{\ding{195}}
$$
from which we deduce $L(y,s)-L(x,t)\leq \text{\ding{194}}-\text{\ding{195}}$. On the other hand
$$
L_{\lambda}(y,s)\geq \text{\ding{193}}+\text{\ding{194}},\qquad L_\lambda(x,t)= \text{\ding{193}}+\text{\ding{195}}
$$
from which we deduce  $L_\lambda(y,s)-L_\lambda(x,t)\geq \text{\ding{194}}-\text{\ding{195}}$.
\end{proof}

We will use the following exponential concentration inequalities (see Sect.2.4.1 in \cite{Remco}):
\begin{lem}\label{lem:poisson}
If $X$ is a Poisson random variable with mean $\theta$ and $a>0$ then
$$
\P(X  \geq \theta+a )\le \exp\left( -\frac{a^2}{2(\theta+a)}\right),
\qquad \P(X  \leq \theta-a )\le \exp\left( -\frac{a^2}{2(\theta+a)}\right). 
$$As a consequence if $X,Y$ are Poisson random variables with respective means $\theta<\theta'$:
 \begin{equation}\label{eq:ConcentrationDeuxPoisson1}
\P(X\geq Y )\le 2\exp\left(- \frac{(\theta'-\theta)^2}{12\theta'}\right).
\end{equation}
\end{lem}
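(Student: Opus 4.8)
The plan is to obtain the two single-variable tail bounds by the standard Chernoff (exponential Markov) method, and then to deduce \eqref{eq:ConcentrationDeuxPoisson1} from them by a midpoint-splitting argument. The only genuine input is the Poisson moment generating function $\E[e^{sX}]=\exp(\theta(e^s-1))$, valid for all $s\in\R$; everything else reduces to elementary one-variable inequalities.

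For the upper tail I would write, for $s>0$,
$$
\P(X\ge\theta+a)\le e^{-s(\theta+a)}\,\E[e^{sX}]=\exp\big(\theta(e^s-1)-s(\theta+a)\big),
$$
and insert the optimal choice $s=\log(1+a/\theta)$, which gives the Bennett exponent $\P(X\ge\theta+a)\le\exp\big(a-(\theta+a)\log(1+a/\theta)\big)$. The quadratic form $a^2/(2(\theta+a))$ claimed in the statement is then recovered from the elementary inequality $(1+u)\log(1+u)-u\ge \tfrac{u^2}{2(1+u)}$ for $u\ge0$ (applied with $u=a/\theta$), which one checks by differentiation: the difference vanishes at $u=0$ and has nonnegative derivative since $\log(1+u)\ge\tfrac12\big(1-(1+u)^{-2}\big)$. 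For the lower tail the symmetric computation with $s>0$ in $\P(X\le\theta-a)\le e^{s(\theta-a)}\E[e^{-sX}]$ yields $\P(X\le\theta-a)\le\exp(-\theta\,\phi(a/\theta))$ with $\phi(u)=(1-u)\log(1-u)+u$; the inequality $\phi(u)\ge u^2/2$ gives the even stronger bound $\exp(-a^2/(2\theta))$, which a fortiori is at most $\exp(-a^2/(2(\theta+a)))$ (the cases $a\ge\theta$ being trivial since $X\ge0$).

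For the consequence, set the midpoint $m=\tfrac{\theta+\theta'}{2}$ and note the deterministic inclusion $\{X\ge Y\}\subset\{X\ge m\}\cup\{Y\le m\}$: on the complementary event one has $X<m<Y$, hence $X<Y$. A union bound then gives $\P(X\ge Y)\le\P(X\ge m)+\P(Y\le m)$ (no independence is needed). Applying the first part with $a=\tfrac{\theta'-\theta}{2}$ to each term, and using $\theta+a=m<\theta'$ for the $X$-term and $\theta'+a=\tfrac{3\theta'-\theta}{2}<\tfrac{3\theta'}{2}$ for the $Y$-term, one bounds both probabilities by $\exp\big(-\tfrac{(\theta'-\theta)^2}{12\theta'}\big)$, which produces the factor $2$ and the constant $12$ exactly.

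I do not expect a serious obstacle here: the argument is entirely elementary, and the only points requiring care are verifying the two scalar inequalities that convert the sharp Bennett/Chernoff exponents into the clean quadratic forms, and bookkeeping the constants in the final step so that both tail terms land below the common threshold $\tfrac{(\theta'-\theta)^2}{12\theta'}$ (the binding one being the lower tail of $Y$, which is precisely what dictates the constant $12$). Alternatively, the whole statement may simply be cited from \cite{Remco}.
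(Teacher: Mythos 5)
Your proof is correct, and it supplies more than the paper does: the paper gives no proof of this lemma at all, simply citing Section 2.4.1 of \cite{Remco} for the one-variable tail bounds and leaving the derivation of \eqref{eq:ConcentrationDeuxPoisson1} implicit. Your Chernoff computation with the Bennett exponent, the scalar inequality $(1+u)\log(1+u)-u\ge u^2/(2(1+u))$ (whose verification via $\log(1+u)\ge\tfrac12(1-(1+u)^{-2})$ checks out), and the stronger lower-tail bound $\exp(-a^2/(2\theta))$ are exactly the standard route to the stated inequalities. The midpoint argument for the consequence is also the intended one, and your constant bookkeeping is right: with $a=(\theta'-\theta)/2$ the $X$-term has exponent $a^2/(2m)\ge(\theta'-\theta)^2/(8\theta')$ and the $Y$-term has exponent $a^2/(2(\theta'+a))\ge(\theta'-\theta)^2/(12\theta')$, so the lower tail of $Y$ is indeed the binding term that forces the constant $12$, and no independence of $X$ and $Y$ is used. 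The only microscopic point worth a half-sentence is the boundary case $a=\theta$ of the lower tail, where $\P(X\le 0)=e^{-\theta}\le e^{-\theta/4}$, but this is covered by your remark that the cases $a\ge\theta$ are trivial.
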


We  establish Proposition \ref{Prop:flutupara} by proving the following lemma.
\begin{lem}\label{Lem:poissongene} 
Assume $\mu^{-1}<\rho/\sigma< \mu$. There exists $C_2$ (depending on $\sigma,\rho,\mu$) such that, for any $i,j\in [-t^{\kappa-\delta},t^{\kappa-\delta}]$ and $t\ge 1$,
\begin{equation}\label{Eq:fluctu1}
\mathbb{P}\left( L(\mathcal{R}_{u_iv_j}) \geq L(\sigma t,\rho t) \right)
\leq \frac{C_2}{t^{1/4}}.
\end{equation}
\end{lem}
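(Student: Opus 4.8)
The plan is to adapt the sinks/sources method of Cator--Pimentel, comparing the displaced rectangle $\mathcal{R}_{u_iv_j}$ with the optimal central rectangle $[0,\sigma t]\times[0,\rho t]$ \emph{on the same realization of} $\Xi$, and showing that the displacement produces a drift against $\mathcal{R}_{u_iv_j}$ of the right sign and size. Throughout I superimpose on $\Xi$ a field of sources and sinks with the characteristic intensity $\lambda^\star=\sqrt{\rho/\sigma}$ attached to the direction $(\sigma,\rho)$, slightly tilted to $\lambda^\star_\pm=\sqrt{\rho/\sigma}(1\pm t^{-1/4})$. The role of this tilt is exactly Lemma \ref{Lem:decol}: with the perturbed intensity the event $\{Z_{\lambda^\star_+}(\sigma t,\rho t)=0\}$ --- where an optimal path fails to spend time on the axis of sources --- has probability $O(t^{-1/4})$, and it is this event that will dominate the final bound.

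First I would reduce the comparison of the two rectangles to a pair of endpoint moves. Writing $Q=(0,0)$, $S=(\sigma t,\rho t)$, and $A,B$ for the lower-left and upper-right corners of $\mathcal{R}_{u_iv_j}$, and writing $L(X\to Y)$ for the longest chain in the rectangle with opposite corners $X,Y$, I split
\[
L(\mathcal{R}_{u_iv_j})-L(\sigma t,\rho t)=\big[L(A\to B)-L(Q\to B)\big]+\big[L(Q\to B)-L(Q\to S)\big].
\]
The second bracket moves only the far corner with the origin fixed, so it is directly in the scope of the crossing Lemma \ref{Lem:domisource}; the first bracket moves only the near corner, and I would bring it into the same form using the point-reflection invariance $(x,y)\mapsto -(x,y)$ of $\Xi$ and of $L$, which turns a displacement of the base into a displacement of an endpoint for the reflected process (with sources/sinks now anchored at $B$).

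On the relevant no-escape events, Lemma \ref{Lem:domisource} bounds each bracket above by the corresponding increment of $L_{\lambda}$; by the stationarity of sources and sinks (Lemma \ref{lem:SourcesPuitsStationnaires}) this increment is, in law, a difference $X-Y$ of two \emph{independent} Poisson variables, with mean $\lambda^\star_{\pm}$ times the horizontal displacement (a gain) and mean $(\lambda^\star_{\pm})^{-1}$ times the vertical displacement (a loss). The central-case hypothesis $\mu^{-1}<\rho/\sigma<\mu$ is precisely what makes the loss strictly dominate the gain: since $[0,\sigma]\times[0,\rho]$ is the \emph{unique} area-maximizing rectangle in $\mathcal{P}_1$ (Lemma \ref{Lem:geometrique_rho}), any nonzero displacement strictly decreases the area, so $\mathbb{E}[Y]-\mathbb{E}[X]\ge c\,(|u_i|+|v_j|)$, which is at least $c\,t^{\delta}$ since the grid spacing is $t^{\delta}$ (and $t$ is large, so the $t^{-1/4}$ tilt does not spoil the sign). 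For narrower-taller displacements I would instead use $\lambda^\star_-$ and the crossing lemma in the reverse direction, which is entirely symmetric.

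Finally, the Poisson concentration inequality \eqref{eq:ConcentrationDeuxPoisson1} gives $\mathbb{P}(X\ge Y)\le 2\exp(-c'\,t^{\delta})$, which is negligible compared with $t^{-1/4}$. Collecting the (at most two) no-escape events --- each of probability $O(t^{-1/4})$ by Lemma \ref{Lem:decol} and its reflected analogue --- together with these stretched-exponential terms yields $\mathbb{P}(L(\mathcal{R}_{u_iv_j})\ge L(\sigma t,\rho t))\le C_2/t^{1/4}$, uniformly in $|i|,|j|\le t^{\kappa-\delta}$. The hard part will be the first step: cleanly reducing the \emph{simultaneous} displacement of both corners to endpoint moves to which the crossing lemma applies, and verifying that the drift has the correct sign in every sign-combination of $(u_i,v_j)$. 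This is where the central-case condition and the choice of a tilted characteristic intensity are essential, whereas the final rate $t^{-1/4}$ is governed entirely by Lemma \ref{Lem:decol}.
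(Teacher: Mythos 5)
Your proposal follows essentially the same route as the paper's proof: the same decomposition $L(\mathcal{R}_{u_iv_j})-L(\sigma t,\rho t)=\bigl(L(\mathcal{R}_{u_iv_j})-L(\mathcal{R}_{0v_j})\bigr)+\bigl(L(\mathcal{R}_{0v_j})-L(\sigma t,\rho t)\bigr)$ into two single-corner moves, the same use of the tilted intensity $\lambda^\star_\pm$ with Lemma \ref{Lem:decol} supplying the dominant $t^{-1/4}$ term, and the same reduction via the crossing Lemma \ref{Lem:domisource} and stationarity (Lemma \ref{lem:SourcesPuitsStationnaires}) to comparing two independent Poisson variables whose means are separated by order $t^\delta$ thanks to $\mu^{-1}<\rho/\sigma<\mu$. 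The step you flag as delicate (handling the displaced lower-left corner by symmetry/reflection) is exactly the step the paper also treats briefly, so your plan is a faithful match.
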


\begin{proof} We begin by proving the lemma for $i=0$ and $j\ge 0$ \emph{i.e.} showing that there exists  $C_2$ such that, for any $j\in \Z_+$ and $t\ge 1$,
\begin{equation}\label{Eq:fluctu_i=0}
\mathbb{P}\left( L(\sigma t +jt^\delta, \rho t-\mu jt^\delta) \geq L(\sigma t,\rho t) \right)
\leq \frac{C_2}{t^{1/4}}.
\end{equation}
As in Lemma \ref{Lem:decol}, let $\lambda^\star_+=\sqrt{\rho/\sigma}(1+t^{-1/4})$.
Using Lemma \ref{Lem:domisource}, we have
\begin{eqnarray}
\mathbb{P}\left( L(\mathcal{R}_{0v_j}) \geq L(\sigma t,\rho t) \right)&\leq &\mathbb{P}(Z_{\lambda^\star_+}[\sigma t,\rho t]= 0)+\mathbb{P}\left( L_{\lambda^\star_+}(\mathcal{R}_{0v_j}) \geq L_{\lambda^\star_+}(\sigma t,\rho t) \right)\notag\\
&\leq & \frac{C_1}{t^{1/4}}+ \mathbb{P}\left( L_{\lambda^\star_+}(\mathcal{R}_{0v_j}) \geq L_{\lambda^\star_+}(\sigma t,\rho t) \right).\label{eq:ComparaisonPoisson}
\end{eqnarray}
In order to bound the last term in the RHS of \eqref{eq:ComparaisonPoisson} we write
$$
L_{\lambda^\star_+}(\mathcal{R}_{0v_j})-  L_{\lambda^\star_+}(\sigma t,\rho t)=
\left(L_{\lambda^\star_+}(\mathcal{R}_{0v_j})-L_{\lambda^\star_+}(\sigma t, \rho t-\mu jt^\delta)\right)- \left( L_{\lambda^\star_+}(\sigma t,\rho t)-L_{\lambda^\star_+}(\sigma t , \rho t-\mu jt^\delta)\right).
$$
Using the stationarity of the Hammersley process with sources and sink (Lemma \ref{lem:SourcesPuitsStationnaires}), we know that  both terms in the RHS of the above equation has a Poisson distribution. More precisely, 
\begin{equation}\label{eq:DomiPoisson}
\mathbb{P}\left( L_{\lambda^\star_+}(\mathcal{R}_{0v_j}) \geq L_{\lambda^\star_+}(\sigma t,\rho t)\right)
= 
\mathbb{P}\left(\mathrm{Poisson}(\lambda^\star_+ jt^\delta) \geq \mathrm{Poisson}(\frac{\mu}{\lambda^\star_+}jt^\delta)  \right),
\end{equation}
where the two Poisson random variables on the RHS are independent.

For $t$ large enough, $\lambda^\star_+$ tends to $\sqrt{\rho/\sigma}$. Therefore the mean of the first Poisson r.v. in \eqref{eq:DomiPoisson} becomes strictly smaller than the second mean (here we use $\rho/\sigma <\mu$). Hence, \eqref{eq:ConcentrationDeuxPoisson1} implies the existence of a constant $C_2:=C_2(\sigma,\rho,\mu,j)$ satisfying \eqref{Eq:fluctu_i=0}.

This concludes the proof of Lemma \ref{Lem:poissongene} for $i=0,j>0$. The proof of the case $i=0,j<0$ is identical (using $\mu^{-1}<\rho/\sigma$).
To extend the formula to $i\neq 0$, we write
$$L(\mathcal{R}_{u_iv_j})- L(\sigma t,\rho t)=
 \Big(L(\mathcal{R}_{u_iv_j})- L(\mathcal{R}_{0v_j})\Big)+\Big(L(\mathcal{R}_{0v_j})-L(\sigma t,\rho t)\Big)$$
Using the same argument as before in the rectangle $\mathcal{R}_{0v_j}$ instead of $\mathcal{R}_{00}$ we still have that $\lambda^\star_+$ (which depends on $j$) is $\sqrt{\rho/\sigma}+o(1)$ uniformly in $j$  
(here we use that $|jt^\delta|\le t^{\kappa}\ll t$). 
Therefore we can bound the probability of the event $ \{L(\mathcal{R}_{u_iv_j})- L(\mathcal{R}_{0v_j})\ge 0\}$  uniformly for $j\in [-t^{\kappa-\delta},t^{\kappa-\delta}]$. 

\end{proof}

We have now all the ingredients to finish the proof of Proposition \ref{Prop:flutupara} \emph{i.e.} to prove \eqref{Eq:supsurrectangle}.
 Recall that $\mathcal{U}$ denote the set of value $(u,v)$ for which $\mathcal{R}_{uv}$ is inside $\mathcal{P}_t$.
 Using the conclusion of Case A (Eq.\eqref{eq_revision1}) and  the symmetry, it remains to bound  $ \sup\{ L(\mathcal{R}_{u v})-L(\sigma t,\rho t), (u,v)\in\mathcal{U}, 0\leq u<t^\kappa, 0\leq v<t^\kappa  \}$.
Since Card$\{|i|,|j|\le t^{\kappa-\delta}\}\sim 4t^{2\kappa-2\delta}=4t^{3/16}=o(t^{1/4})$,
we get using Lemma \ref{Lem:poissongene} that
\begin{equation}
\lim_{t \to \infty}\mathbb{P}\left( \sup_{|i|,|j|\le t^{\kappa-\delta}} L(\mathcal{R}_{u_iv_j}) \geq L(\sigma t,\rho t) \right)=0.
\end{equation}
For $u\in [u_i,u_{i+1}]$, $v\in [v_j,v_{j+1}]$ consider a maximizing path in $\mathcal{R}_{uv}$. This path leaves the rectangle $\mathcal{B}^i:=[-u_{i+1},-u_{i}]\times[\mu u_{i},\mu u_{i+1}]$ through the top or right border and enters the rectangle  $\mathcal{C}^j:=[\sigma t+v_j,\sigma t+v_{j+1}]\times[\rho t-\mu v_{j+1},\rho t-\mu v_j]$ through the left or the bottom. One of those four cases is illustrated in Fig.\ref{Fig:Ruv}.
\begin{figure}[h]
\begin{center}
\includegraphics[width=13cm]{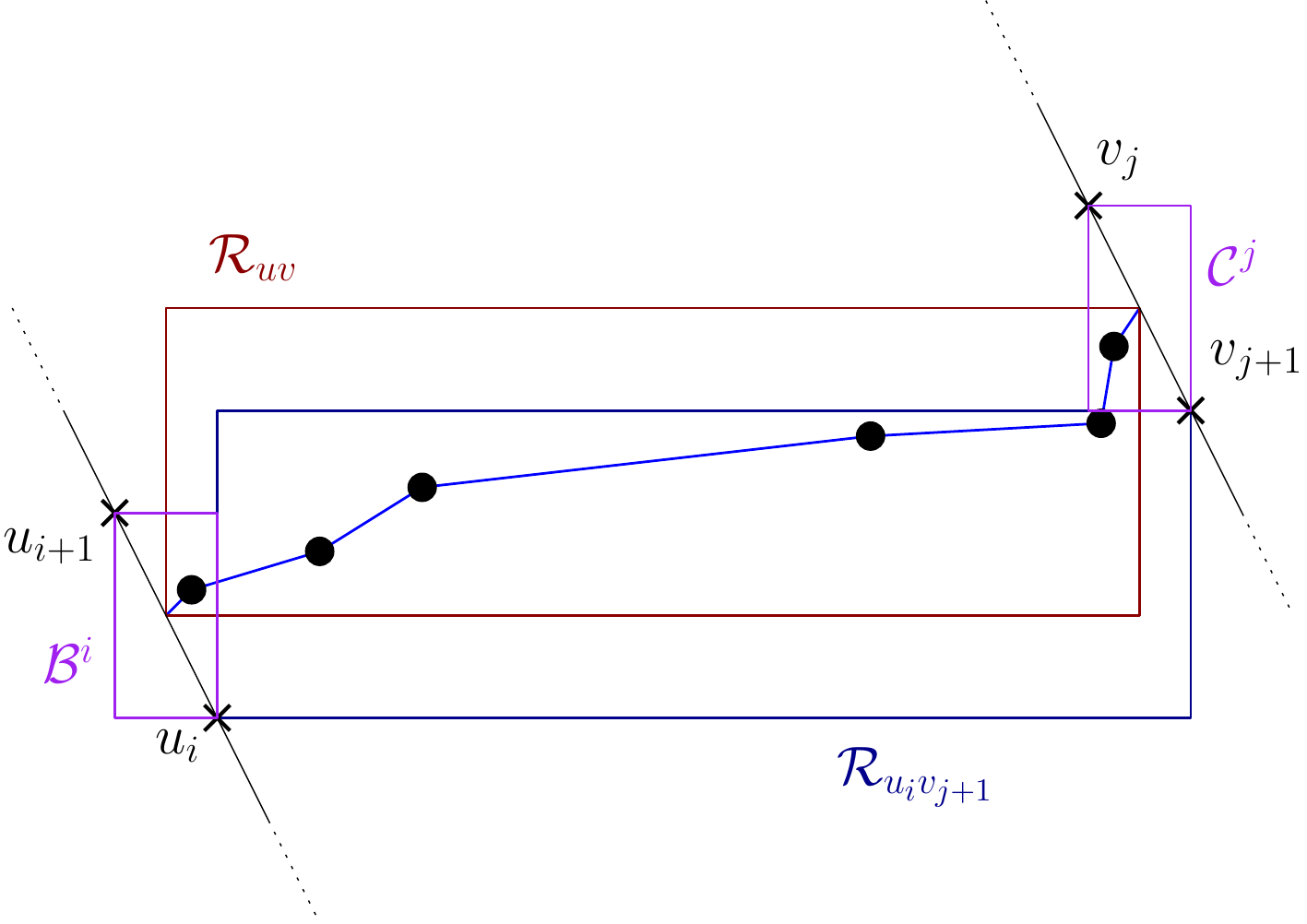}
\end{center}
\caption{Illustration of one of the four cases in \eqref{eq:Ruv}: the maximizing path in $\mathcal{R}_{uv}$ leaves $\mathcal{B}^i$ through the right and enters $\mathcal{C}^j$ through the bottom. We have 
$
L(\mathcal{R}_{uv}) \le L(\mathcal{R}_{u_iv_{j+1}})+L(\mathcal{B}^i)+L(\mathcal{C}^j).
$
}
\label{Fig:Ruv}
\end{figure}
Therefore, for  $0\leq i,j\leq t^{\kappa-\delta}$, we have
\begin{multline}\label{eq:Ruv}
\sup_{\substack{u\in [u_i,u_{i+1}] \\ v\in [v_j,v_{j+1}]}}L(\mathcal{R}_{uv})\le \max(L(\mathcal{R}_{u_iv_j}),L(\mathcal{R}_{u_iv_{j+1}}),L(\mathcal{R}_{u_{i+1}v_j}),L(\mathcal{R}_{u_{i+1}v_{j+1}}))\\
+L(\mathcal{B}^i)+L(\mathcal{C}^j).
\end{multline}
The random variables $L(\mathcal{B}^i)$, $L(\mathcal{C}^j)$ both have the same distribution as $L(t^\delta,\mu t^\delta)$. Since $\delta<1/3-1/32$, using Proposition \ref{Prop:Flu_Ln_continu}, we get
$$\P(L([-u_{i+1},-u_{i}]\times[\mu u_{i},\mu u_{i+1}])\ge t^{1/3-1/32})\le \frac{C}{t}.$$
Thus, an union bound implies Proposition \ref{Prop:flutupara}.
\end{proof}

\subsection{Fluctuations: The non-central case. Proof of Theorem \ref{Th:FluctuLipschitz} (ii).}

\paragraph{Fluctuations are of order $\gg t^{1/3}$.} We will show that, in the non-central case,
 for all $x\in \R$
$$\lim_{t \to \infty}\P\left(
\frac{L^{\alpha,\beta}(at,bt)-t f^{\alpha,\beta}(a,b)}
{ t^{1/3}}\le x\right)=0.
$$
The key argument is to exploit a result by Johansson \cite{JohanssonTransversal} for the \emph{transversal fluctuations} in the Hammersley problem.
We make the assumption that 
$$
\frac{b}{a}< \frac{2}{\tfrac{1}{\alpha}+\tfrac{1}{\beta}}\Leftrightarrow \rho/\sigma <\mu^{-1}
$$
where, as in the proof of Theorem \ref{MaxiTheoreme}, $\rho,\sigma,\mu$ are defined in \eqref{eq:Def_cc'mu},\eqref{eq:Def_rhosigma}.
Let $\mathcal{P}_1=PQRS$ be the parallelogram with $Q=(0,0)$, $S=(\sigma,\rho)$ and such that $P=c(-1,\mu)$
and $R=c'(\mu,-1)$ for some $c,c'>0$.
Recall Lemma \ref{Lem:geometrique_rho}: if $\xi:=\frac{\rho\mu +\sigma}{2\mu}$,
the rectangle $\mathcal{R}:=[0,\mu \xi]\times[0,\xi]$ is a maximizing rectangle inside $\mathcal{P}_1$. Any translation of this rectangle by a vector $u(\mu,-1)$ with $u\in[0,(\sigma-\mu\rho)/2\mu]$ remains a maximizing rectangle.
Denote $\mathcal{R}_t$ a homothetic transformation by a factor $t$ of $\mathcal{R}$ and let 
 
$$
\mathcal{S}_t := \left\{(x,y)\in\mathcal{R}_t, |\mu y-x|\le t^{3/4}\right\}$$
be a " small cylinder" around the diagonal of $\mathcal{R}_t$.
The following Lemma is a weak consequence of  \cite[Th.1.1]{JohanssonTransversal} (take $\gamma=3/4$ in eq.(1.6) in \cite{JohanssonTransversal}).

\begin{figure}
\begin{center}
\includegraphics[width=8cm]{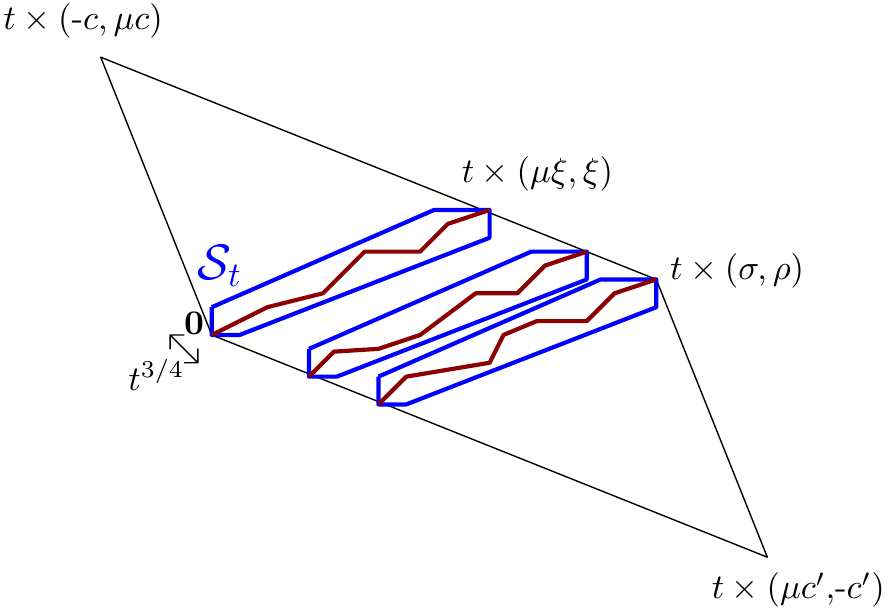}
\caption{Example of a parallelogram with $\rho/\sigma<\mu^{-1}$.}
\label{Fig:Fluctuationnoncentral}
\end{center}
\end{figure}
 
\begin{lem}[Johansson (2000)]
$$\lim_{t \to \infty}\P(L(\mathcal{S}_t)=L(\mathcal{R}_t))=1.$$
\end{lem}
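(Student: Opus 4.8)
The plan is to reduce the claim to Johansson's transversal-fluctuation estimate by straightening $\mathcal{R}_t$ into a square. Since $\mathcal{S}_t\subset\mathcal{R}_t$ we always have $L(\mathcal{S}_t)\le L(\mathcal{R}_t)$, so it suffices to prove that with probability tending to one some optimal chain for $L(\mathcal{R}_t)$ has all its points in $\mathcal{S}_t$; equivalently, that w.h.p. no maximal chain in $\mathcal{R}_t$ visits a point with $|\mu y-x|>t^{3/4}$.

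First I would apply the linear map $\Psi(x,y)=(x/\sqrt{\mu},\sqrt{\mu}\,y)$. It has determinant $1$, hence preserves Lebesgue measure, so the image of $\Xi$ is again a rate-one Poisson process; and it is increasing in each coordinate, hence preserves the order $\preccurlyeq$ and sends chains to chains of the same length. Consequently $\Psi$ maps optimal chains in $\mathcal{R}_t=[0,\mu\xi t]\times[0,\xi t]$ bijectively onto optimal chains in the square $\Psi(\mathcal{R}_t)=[0,N]^2$ with $N:=\sqrt{\mu}\,\xi t=\Theta(t)$, and the diagonal of $\mathcal{R}_t$ is sent to the main diagonal $\{y'=x'\}$ of the square. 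Writing $x=\sqrt{\mu}\,x'$ and $y=y'/\sqrt{\mu}$ one checks $\mu y-x=\sqrt{\mu}\,(y'-x')$, so the cylinder $\mathcal{S}_t$ becomes $\{(x',y')\in[0,N]^2:\ |y'-x'|\le t^{3/4}/\sqrt{\mu}\}$, a strip of half-width of order $t^{3/4}=\Theta(N^{3/4})$ around the main diagonal.

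It then remains to bound the probability that a maximal chain in $[0,N]^2$ leaves this strip, which is exactly the content of Johansson's Theorem 1.1: fix any exponent $\gamma$ with $2/3<\gamma<3/4$; by eq. (1.6) of \cite{JohanssonTransversal} the probability that some optimal path deviates from the main diagonal by more than $N^{\gamma}$ tends to $0$. Since $\gamma<3/4$ and our strip has half-width $\Theta(N^{3/4})$, for $t$ large the Johansson strip of half-width $\sim N^{\gamma}$ is contained in the image of $\mathcal{S}_t$; hence w.h.p. every optimal chain in the square — and therefore, pulling back by $\Psi^{-1}$, every optimal chain in $\mathcal{R}_t$ — stays inside $\mathcal{S}_t$, giving $L(\mathcal{S}_t)=L(\mathcal{R}_t)$ with probability tending to one.

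The only genuine bookkeeping is checking that $\Psi$ is measure- and order-preserving (so that the Poisson law and all passage times are unchanged) and that the coordinate deviation $|\mu y-x|$ transforms into the transversal coordinate of the square up to the harmless factor $\sqrt{\mu}$. The substantive input — that transversal fluctuations are of order $N^{2/3}$, so that a window of width $\sim N^{3/4}$ captures the whole optimal path — is imported wholesale from \cite{JohanssonTransversal}, and this is the step I expect to carry all the difficulty.
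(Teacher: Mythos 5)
Your proof is correct and follows the same route the paper intends: the paper simply cites Johansson's Theorem 1.1 with $\gamma=3/4$, and your argument supplies the (implicit) unit-determinant rescaling of $\mathcal{R}_t$ to a square plus the verification that the cylinder $\{|\mu y-x|\le t^{3/4}\}$ becomes a diagonal strip of width $\Theta(N^{3/4})\gg N^{\gamma}$. Choosing $\gamma$ strictly between $2/3$ and $3/4$ to absorb the constant $1/\sqrt{\mu}$ is a sensible (and slightly more careful) touch, but the substance is identical.
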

In particular, $$N_t:=\frac{L(\mathcal{S}_t)-2\sqrt{\mu}\xi t}{(\sqrt{\mu}\xi t)^{1/3}}$$ tends to a Tracy-Widom distribution.
Let $
\mathcal{S}_t(u)$ be the image of $\mathcal{S}_t$ after a translation of vector $ut(\mu,-1)$. We clearly have
$$L(\mathcal{P}_t)\ge \sup_{0\le u\le (\sigma-\mu\rho)/2\mu} L(\mathcal{S}_t(u)).$$
Moreover, the random variables $L(\mathcal{S}_t(u))$ and $L(\mathcal{S}_t(u'))$ have the same distribution and are independent if $|u-u'|\ge 2t^{-1/4}$, since, in this case, $
\mathcal{S}_t(u)\cap 
\mathcal{S}_t(u')=\emptyset$.
Hence, we get, with $c_0:=\frac{\sigma-\mu \rho}{4\mu}$, the stochastic lower bound
$$\frac{L(\mathcal{P}_t)-2\sqrt{\mu}\xi t}{(\sqrt{\mu}\xi t)^{1/3}} \succcurlyeq  \max_{1\le i\le c_0t^{1/4}}N^i_t$$
  where $(N^i_t)_{i\ge 1}$ are i.i.d. random variables with common distribution $N_t$. 

Therefore, for every real $x$,
$$
\mathbb{P}\left(\frac{L(\mathcal{P}_t)-2\sqrt{\mu}\xi t}{(\sqrt{\mu}\xi t)^{1/3}} \leq x\right)
\leq \mathbb{P}\left( N^1_t \leq x\right)^{c_0t^{1/4}} \stackrel{t\to +\infty}{\to} 0.
$$

\paragraph{Fluctuations are of order $\ll t^{1/3+\eps}$.}

Here we can re-use the strategy of the proof of the limiting shape. If we take \eqref{eq:Majot6},\eqref{eq:Majot7} with
$$
\gamma t = t f^{\alpha,\beta}(a,b) + At^{1/3+\eps}
$$
for a given real $A$ then we obtain with Proposition \ref{Prop:Flu_Ln_continu} that
$$
\mathbb{P}\left(L(\mathcal{P}_t) \geq t f^{\alpha,\beta}(a,b) +At^{1/3+\eps} \right)
\to 0.
$$

\section*{Concluding remarks}

\begin{itemize}
\item In the first version of this paper we had announced a bit hastily that  our proof of Theorem \ref{Th:FluctuLipschitz} could extend to the critical points  $ \frac{b}{a}=\frac{\alpha+\beta}{2}$ and $ \frac{b}{a} =  \frac{2}{1/\beta+1/\alpha}$. 
It now seems less obvious to us that fluctuations are still of order $t^{1/3}$ at these points.
In any case, our proof does not work in this setting.
\item There is a little room left between $t^{1/3}$ and $t^{1/3+\eps}$ in the non-central case of Theorem \ref{Th:FluctuLipschitz}. We leave open the question of the exact order of fluctuations.

\item Proposition \ref{Prop:OptiLosange} (Hammersley problem in a parallelogram) can be generalized
to an arbitrary convex domain $D$. We would get that
$$
 \lim_{t\to \infty} \frac{L(t\times D)}{t} =
 \sup\{2\sqrt{(v'-v)(u'-u)},\  (u,v) \preccurlyeq (u',v')\in D\}.
$$
Indeed, the proof of the upper bound is easily generalized. Regarding the lower bound, \eqref{eq:MinoRectangleInclus} is not relevant anymore as the whole rectangle $[u,v]\times [u',v']$ may not be  included in $\mathcal{P}_1$. In order to generalize the proof we need to use the localization of the longest path in $[u,v]\times [u',v']$ along the diagonal.
\item Even if we have not written all the details, it seems possible with a little work to deduce from this article that the wandering exponent in the central case is $\chi=2/3$ as expected. 
The idea is that our proof of Proposition  \ref{Prop:flutupara} shows that maximizing paths within the parallelogram start from a point located at most $t^\kappa$ from the origin. As $\kappa < 2/3$ the transversal fluctuations of such paths should be still of order $t^{2/3}$.
\end{itemize}

\subsection*{Aknowledgements}
We would like to thank the referee for her/his very careful reading. In particular it helped us to clarify the agreement between our asymptotics and the space-time scaling of the KPZ universality class.

\end{document}